\newcounter{mysubsection}[section]%
\newcounter{mysubsubsection}[mysubsection]%
\newtheorem{corollary}[mysubsection]{Corollary}%
\newtheorem{lemma}[mysubsection]{Lemma}%
\newtheorem{problem}[mysubsection]{Problem}%
\newtheorem{proposition}[mysubsection]{Proposition}%
\newtheorem{theorem}[mysubsection]{Theorem}%
\newtheorem{example}[mysubsection]{Example}%
\def\qed{{\unskip\nobreak\hfil\penalty50%
  \hskip2em\hbox{}\nobreak\hfil$\square$%
  \parfillskip=0pt\finalhyphendemerits=0\par}}
\newenvironment{proof}%
  {\par\addvspace{\medskipamount}%
    \upshape%
    {\slshape\scshape
    Proof\hskip\labelsep}}%
  {\qed%
    \addvspace{\medskipamount}}%
\newcommand\ideal[1]{\mathfrak{\lowercase{#1}}}
\newcommand\rMod{\textbf{Mod}-}
\newcommand\Ann{\textrm{Ann}}
\newcommand\Clt[3]{\mathrm{Cl}_{#1}^{#2}(#3)} %
\newcommand\Jac{\textrm{Jac}}
\newcommand\Max{\textrm{Max}}
\newcommand\Reg{\textrm{Reg}}
\newcommand\sAm{\sigma_{A\setminus\ideal{m}}}
\newcommand\sAp{\sigma_{A\setminus\ideal{p}}}
\newcommand\sepa{\vspace*{-3mm}\setlength{\itemsep}{-2mm}}
\newcommand\Spec{\textrm{Spec}}
\newcommand\blfootnote[1]{%
  \begingroup
  \renewcommand\thefootnote{}\footnote{#1}%
  \addtocounter{footnote}{-1}%
  \endgroup}
\begin{document}
\title{An extension of S-artinian rings and modules to a hereditary torsion theory setting}
\author{P. Jara\\ {\normalsize Department of Algebra}\\ {\normalsize University of Granada}}
\date{}
\maketitle\blfootnote{ \date{\today}; pjara@ugr.es
\newline
2010 \emph{Mathematics Subject Classification:} 13E05, 13E10
\newline
\emph{Key words:} noetherian ring and module, artinian ring and module.}

\begin{abstract}
For any commutative ring $A$ we introduce a generalization of $S$--artinian rings using a hereditary torsion theory $\sigma$ instead of a multiplicative closed subset $S\subseteq{A}$. It is proved that if $A$ is a totally $\sigma$--artinian ring, then $\sigma$ must be of finite type, and $A$ is totally $\sigma$--noetherian.
\end{abstract}

\section*{Introduction}

In \cite{Hamann/Houston/Johnson:1988}, the authors study the problem of determining the structure of the polynomial ring $D[X]$ over  an integral domain $D$ with field of fractions $K$, looking for the structure of the Euclidean domain $K[X]$. In particular, an ideal $\ideal{a}\subseteq{D[X]}$ is said to be \textbf{almost principal} whenever there exist a polynomial $F\in\ideal{a}$, of positive degree, and an element $0\neq{s}\in{D}$ such that $\ideal{a}s\subseteq{FD[X]}\subseteq\ideal{a}$. The integral domain $D$ is an \textbf{almost principal domain} whenever every ideal $\ideal{a}\subseteq{D[X]}$, which extends properly to $K[X]$, is almost principal.
Noetherian and integrally closed domains are examples of almost principal domains.

Later, in \cite{Anderson/Dumitrescu:2002}, the authors extend this notion to non--necessarily integral domains in defining, for a given multiplicatively closed subset $S\subseteq{A}$ of a ring $A$, an ideal $\ideal{a}\subseteq{A}$ to be \textbf{$S$--finite} if there exist a finitely generated ideal $\ideal{a}'\subseteq\ideal{a}$ and an element $s\in{S}$ such that $\ideal{a}s\subseteq\ideal{a}'$, and define a ring $A$ to be \textbf{$S$--noetherian} whenever every ideal $\ideal{a}\subseteq{A}$ is $S$--finite.
Many authors have worked on $S$--noetherian rings and related notions, and shown relevant results on its structure. See for instance
\cite{Eljeri:2018,
Hamed:2018,
Lim:2015,
Lim/Oh:2014,
Sevim/Tekir/Koc:2020,
Zhongkui:2007}.

In \cite{Sevim/Tekir/Koc:2020}, the author study $S$--artinian rings, dualizing the former notion of $S$--noetherian ring, and give some characterization of $S$--artinian rings in terms of finite cogeneration with respect to $S$. Our aim is to show that this theory is part of a more general theory involving hereditary torsion theories. In particular, we show that if $A$ is totally $\sigma$--artinian, then the hereditary torsion theory $\sigma$ is of finite type, and, in addition, $A$ it is totally $\sigma$--noetherian.

The background will be the hereditary torsion theories on a commutative (and unitary) ring $A$, see \cite{GOLAN:1986, STENSTROM:1975}, and $\rMod{A}$ denotes the category of $A$--modules. Thus, a hereditary torsion theory $\sigma$ in $\rMod{A}$ is given by one of the following objects:
\begin{enumerate}[(1)]\sepa
\item
a \textbf{torsion class} $\mathcal{T}_\sigma$, a class of modules which is closed under submodules, homomorphic images, direct sums and group extensions,
\item
a \textbf{torsionfree class} $\mathcal{F}_\sigma$, a class of modules which is closed under submodules, essential extensions, direct products and group extensions,
\item
a \textbf{Gabriel filter} of ideals $\mathcal{L}(\sigma)$, a non--empty filter of ideals satisfying that every $\ideal{b}\subseteq{A}$, for which there exists an ideal $\ideal{a}\in\mathcal{L}(\sigma)$ such that $(\ideal{b}:a)\in\mathcal{L}(\sigma)$, for every $a\in\ideal{a}$, belongs to $\mathcal{L}(\sigma)$.
\item
a \textbf{left exact kernel functor} $\sigma:\rMod{A}\longrightarrow\rMod{A}$.
\end{enumerate}
The relationships between these notions are the following. If $\sigma$ is the left exact kernel functor, then
\[
\begin{array}{ll}
\mathcal{T}_\sigma=\{M\in\rMod{A}\mid\;\sigma{M}=M\},\\
\mathcal{F}_\sigma=\{M\in\rMod{A}\mid\;\sigma{M}=0\},\\
\mathcal{L}(\sigma)=\{\ideal{a}\subseteq{A}\mid\;A/\ideal{a}\in\mathcal{T}_\sigma\}.\\
\end{array}
\]
If $\mathcal{L}$ is the Gabriel filter of $\sigma$, and $\mathcal{T}$ the torsion class, for any $A$--module $M$ we have:
\[
\begin{array}{lll}
\sigma{M}
&=\{m\in{M}\mid\;(0:m)\in\mathcal{L}\}
&=\sum\{N\subseteq{M}\mid\;N\in\mathcal{T}\}.
\end{array}
\]

\begin{example}\label{ex:20200103}
\begin{enumerate}[(1)]\sepa
\item
Let $\Sigma\subseteq{A}$ be a multiplicatively closed subset, there exists a hereditary torsion theory, $\sigma_\Sigma$, defined by
\[
\mathcal{L}(\sigma_\Sigma)=\{\ideal{a}\subseteq{A}\mid\;\ideal{a}\cap\Sigma\neq\varnothing\}.
\]
Observe that $\sigma_\Sigma$ has a filter basis constituted by principal ideals. Every hereditary torsion theory $\sigma$ such that $\mathcal{L}(\sigma)$ has a filter basis of principal ideals is called a \textbf{principal} hereditary torsion theory.
We can show that there is a correspondence between principal hereditary torsion theories in $\rMod{A}$, and saturated multiplicatively closed subsets in $A$.
\item\label{it:ex:20200103-2}
Let $\mathcal{A}$ be a set of finitely generated ideals of a ring $A$, then
$$
\mathcal{L}=\{\ideal{b}\subseteq{A}\mid\;\textrm{there exists }\ideal{a}_1,\ldots,\ideal{a}_t\in\mathcal{A}\textrm{ such that }\ideal{a}_1\cdots\ideal{a}_t\subseteq\ideal{b}\}
$$
is a Gabriel filter.
\end{enumerate}
\end{example}

\medskip

This paper is organized in sections. In the first one we introduce the main subject: totally $\sigma$--artinian rings and modules, and show examples, their first properties, and the decisive fact: if $A$ is a totally $\sigma$--artinian ring, then $\sigma$ is a finite type hereditary torsion theory. In section two we deal with scalar extensions, which will be useful for studying local properties. In section three we give an extra characterization of totally $\sigma$--artinian rings and modules with the minimal conditions we found out. In the fourth section, we study we study the behaviour of prime ideals in relation with totally $\sigma$--artinian modules. Sections five and six is devoted to establish the necessary background  to show that every totally $\sigma$--artinian rings is also totally $\sigma$--noetherian.

\section{Totally $\sigma$--artinian rings and modules}

For any $\sigma$--torsion finitely generated $A$--module $M$, if $M=m_1A+\cdots+m_tA$, since $(0:m_i)\in\mathcal{L}(\sigma)$, for any $i=1,\ldots,t$, then $\ideal{h}:=\cap_{i=1}^t(0:m_i)\in\mathcal{L}(\sigma)$, and satisfies $M\ideal{h}=0$. In general, this result does not hold for $\sigma$--torsion non--finitely generated $A$--modules. Therefore, we shall define an $A$--module $M$ to be \textbf{totally $\sigma$--torsion} whenever there exists $\ideal{h}\in\mathcal{L}(\sigma)$ such that $M\ideal{h}=0$. The notion of totally torsion appears, for instance, in \cite[page 462]{Jategaonkar:1971}.

For any ideal $\ideal{a}\subseteq{A}$ we have two different notions of finitely generated ideals relative to $\sigma$:
\begin{enumerate}[(1)]\sepa
\item
$\ideal{a}\subseteq{A}$ is \textbf{$\sigma$--finitely generated} whenever there exists a finitely generated ideal $\ideal{a}'\subseteq\ideal{a}$ such that $\ideal{a}/\ideal{a}'$ is $\sigma$--torsion.
\item
$\ideal{a}\subseteq{A}$ is \textbf{totally $\sigma$--finitely generated} whenever there exists a finitely generated ideal $\ideal{a}'\subseteq\ideal{a}$ such that $\ideal{a}/\ideal{a}'$ is totally $\sigma$--torsion. \end{enumerate}

In the same way, for any ring $A$ we have two different notions of noetherian ring relative to $\sigma$:
\begin{enumerate}[(1)]\sepa
\item
$A$ is \textbf{$\sigma$--noetherian} if every ideal is $\sigma$--finitely generated.
\item
$A$ is \textbf{totally $\sigma$--noetherian} whenever every ideal is totally $\sigma$--finitely generated.
\end{enumerate}

\medskip
\begin{example}\label{ex:20210125b}
\begin{enumerate}[(1)]\sepa
\item
Every finitely generated ideal is totally $\sigma$--finitely generated and every totally $\sigma$--finitely generated ideal is $\sigma$--finitely generated.
\item
Let $S\subseteq{A}$ be a multiplicatively closed subset, an ideal $\ideal{a}\subseteq{A}$ is $S$--finite if, and only if, it is totally $\sigma_S$--finitely generated. The ring $A$ is $S$--noetherian if, and only if, $A$ is totally $\sigma_S$--noetherian
\end{enumerate}
\end{example}

We may dualize this notions, thus, if $A$ is a ring and $\sigma$ a hereditary torsion theory in $\rMod{A}$,
\begin{enumerate}[(1)]\sepa
\item
$A$ is \textbf{$\sigma$--artinian} if every decreasing chain of ideals is $\sigma$--stable.
\item
$A$ is \textbf{totally $\sigma$--artinian} if every decreasing chain of ideals is totally $\sigma$--stable.
\end{enumerate}

Being a decreasing chain of ideals $\ideal{a}_1\supseteq\ideal{a}_2\supseteq\cdots$ \textbf{$\sigma$--stable} whenever there exists an index $m$ such that $\ideal{a}_s\subseteq_\sigma\ideal{a}_m$, for every $s\geq{m}$, i.e., every $\ideal{a}_s$ is  \textbf{$\sigma$--dense} in $\ideal{a}_m$, or equivalently, for every $x\in\ideal{a}_m$ there exists $\ideal{h}\in\mathcal{L}(\sigma)$ such that $x\ideal{h}\subseteq\ideal{a}_s$ (observe that $\ideal{h}$ depends of $x$ and $s$). Otherwise, the decreasing chain of ideals is \textbf{totally $\sigma$--stable} whenever there exist an index $m$, and $\ideal{h}\in\mathcal{L}(\sigma)$ such that $\ideal{a}_m\ideal{h}\subseteq\ideal{a}_s$, for every $s\geq{m}$.

\begin{lemma}
For any ring $A$ we have:
\[
A\textrm{ is artinian}
\Rightarrow
A\textrm{ is totally $\sigma$--artinian}
\Rightarrow
A\textrm{ is $\sigma$--artinian.}
\]
\end{lemma}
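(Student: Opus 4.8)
The plan is to check both implications straight from the definitions; neither requires anything beyond bookkeeping, and the only point worth recording explicitly is that the whole ring $A$ always belongs to the Gabriel filter $\mathcal{L}(\sigma)$ (a Gabriel filter is by definition a non--empty filter of ideals, hence upward closed and containing the largest ideal $A$).

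For the first implication, assuming $A$ is artinian, I would take an arbitrary decreasing chain of ideals $\ideal{a}_1\supseteq\ideal{a}_2\supseteq\cdots$ and use the descending chain condition to find an index $m$ with $\ideal{a}_s=\ideal{a}_m$ for every $s\geq{m}$. Choosing $\ideal{h}=A\in\mathcal{L}(\sigma)$ then gives $\ideal{a}_m\ideal{h}=\ideal{a}_m=\ideal{a}_s\subseteq\ideal{a}_s$ for all $s\geq{m}$, which is precisely the defining condition for the chain to be totally $\sigma$--stable. Since the chain was arbitrary, $A$ is totally $\sigma$--artinian.

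For the second implication, assuming $A$ is totally $\sigma$--artinian, I would again start from an arbitrary decreasing chain $\ideal{a}_1\supseteq\ideal{a}_2\supseteq\cdots$ and, using the hypothesis, fix an index $m$ and an ideal $\ideal{h}\in\mathcal{L}(\sigma)$ with $\ideal{a}_m\ideal{h}\subseteq\ideal{a}_s$ for all $s\geq{m}$. Then for each $s\geq{m}$ and each $x\in\ideal{a}_m$ one has $x\ideal{h}\subseteq\ideal{a}_m\ideal{h}\subseteq\ideal{a}_s$, so the single ideal $\ideal{h}\in\mathcal{L}(\sigma)$ serves uniformly for every $x$ and witnesses $\ideal{a}_s\subseteq_\sigma\ideal{a}_m$; hence the chain is $\sigma$--stable and $A$ is $\sigma$--artinian.

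I do not anticipate a genuine obstacle here: both steps are formal, the first needing only the observation $A\in\mathcal{L}(\sigma)$, and the second in fact delivering the slightly stronger fact that every totally $\sigma$--stable chain is $\sigma$--stable with a uniform choice of $\sigma$--dense ideal. If anything deserves care, it is merely to keep straight the two layers of quantifiers ($\exists\,\ideal{h}$ for the whole chain versus $\exists\,\ideal{h}$ for each element $x$) in the definitions of totally $\sigma$--stable and $\sigma$--stable.
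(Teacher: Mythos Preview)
Your proof is correct; the paper itself states this lemma without proof, treating both implications as immediate from the definitions, which is precisely what your argument supplies.
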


The notions of $\sigma$--artinian (resp. totally $\sigma$--artinian) and $\sigma$--noetherian (resp. totally $\sigma$-noetherian) ring can be extended to $A$--modules in an easy way.

Trivial examples of totally $\sigma$--artinian modules are the totaly $\sigma$--torsion modules. Also every artinian module is totally $\sigma$--artinian for every hereditary torsion theory $\sigma$.

These two notions of torsion, and the derived notions from them, are completely different in its behaviour and its categorical properties. For instance, due to the definition, for any $A$--module $M$ there exists a maximum submodule belonging to $\mathcal{T}_\sigma$, the submodule: $\sigma{M}$, and it satisfies $M/\sigma{M}\in\mathcal{F}_\sigma$. In the totally $\sigma$--torsion case we cannot assure the existence of a maximal totally $\sigma$--torsion submodule. The existence of a maximum $\sigma$--torsion submodule allows us to build new concepts relative to $\sigma$ as lattices, closure operator and localization; concepts that we do not have in the totally $\sigma$--torsion case. For instance, the ring $A$ is $\sigma$--artinian if, and only if, the lattice $C(A,\sigma)=\{\ideal{a}\mid\;A/\ideal{a}\in\mathcal{F}_\sigma\}$ is an artinian lattice.
Nevertheless, the totally $\sigma$--torsion case allows us to study arithmetic properties of rings and modules which are hidden with that use of $\sigma$--torsion, and these properties are those which we are interested in studying.

As we point out before, the $\sigma$--torsion allows us, for any $A$--module $M$, to define a lattice
\[
C(M,\sigma)=\{N\subseteq{M}\mid\;M/N\in\mathcal{F}_\sigma\},
\]
and in $\mathcal{L}(M)$, the lattice of all submodules of $M$, a closure operator $\Clt{\sigma}{M}{-}:\mathcal{L}(M)\longrightarrow{C(M,\sigma)}\subseteq\mathcal{L}(M)$, defined by the equation $\Clt{\sigma}{M}{N}/N=\sigma(M/N)$. The elements in $C(M,\sigma)$ are called the \textbf{$\sigma$--closed} submodules of $M$, and the lattice operations in $C(M,\sigma)$, for any $N_1,N_2\in{C(M,\sigma)}$, are defined by
\[
\begin{array}{ll}
N_1\wedge{N_2}=N_1\cap{N_2},\\
N_1\vee{N_2}=\Clt{\sigma}{M}{N_1+N_2}.
\end{array}
\]
Dually, the submodules $N\subseteq{M}$ such that $M/N\in\mathcal{T}_\sigma$ are called \textbf{$\sigma$--dense} submodules. The set of all $\sigma$--dense submodules of $M$ is represented by $\mathcal{L}(M,\sigma)$, $\mathcal{L}(\sigma)$ in the case in which $M=A$.

In the following, we assume $A$ is a ring, $\rMod{A}$ is the category of $A$--modules and $\sigma$ is a hereditary torsion theory on $\rMod{A}$. Modules are represented by Latin letters: $M,N,N_1,\ldots$, and ideals by Gothics letters: $\ideal{a},\ideal{b},\ideal{b}_1,\ldots$ Different hereditary torsion theories will be represented by Greek letters: $\sigma,\tau,\sigma_1,\ldots$, and induced hereditary torsion theories by adorned Greek letters: $\sigma',\overline{\tau},\ldots$

In order to establish equivalent condition to (totally) $\sigma$--artinian modules, we introduce the definition of finitely cogenerated $A$--module.

\begin{enumerate}[(1)]\sepa
\item
An $A$--module $M$ is \textbf{finitely cogenerated} if for any family of submodules $\{N_i\mid\;i\in{I}\}$ such that $\cap_{i\in{I}}N_i=0$, there exists a finite subset $J\in{I}$ such that $\cap_{j\in{J}}N_j=0$.
\item
In the same way, in \cite{GOLAN:1986} the author uses the notion of $\sigma$--finitely cogenerated modules; an $A$--module $M$ is \textbf{$\sigma$--finitely cogenerated} if for any family of submodules $\{N_i\mid\;i\in{I}\}$ such that $\cap_{i\in{I}}N_i$ is $\sigma$--torsion there exists a finite subset $J\subseteq{I}$ such that $\cap_{j\in{J}}N_j$ is $\sigma$--torsion.
\item
In our case for totally $\sigma$--torsion, we define an $A$--module to be \textbf{totally $\sigma$--finitely cogenerated} whenever for every family of submodules $\{N_i\mid\;i\in{I}\}$ such that $\cap_{i\in{I}}N_i$ is totally $\sigma$--torsion there exists a finite subset $J\subseteq{I}$ such that $\cap_{j\in{J}}N_j$ is totally $\sigma$--torsion, i.e., there exists $\ideal{h}\in\mathcal{L}(\sigma)$ such that $(\cap_{j\in{J}}N_j)\ideal{h}=0$.
\end{enumerate}

\begin{theorem}
Let $A$ be a ring and $\sigma$ a hereditary torsion theory in $\rMod{A}$, for any $A$--module $M$ the following statements are equivalent:
\begin{enumerate}[(a)]\sepa
\item
$M$ is totally $\sigma$--artinian.
\item
Every quotient $M/N$ of $M$ is totally $\sigma$--finitely cogenerated.
\end{enumerate}
\end{theorem}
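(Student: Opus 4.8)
The plan is to prove the two implications directly from the definitions, the content lying in (a)$\Rightarrow$(b). For (b)$\Rightarrow$(a): given a decreasing chain $N_1\supseteq N_2\supseteq\cdots$ of submodules of $M$, put $N=\bigcap_n N_n$; in $M/N$ the family $\{N_n/N\}_n$ has zero intersection, hence is totally $\sigma$--torsion, so (b) applied to the quotient $M/N$ yields a finite set $J$ and an ideal $\ideal{h}\in\mathcal{L}(\sigma)$ with $\big(\bigcap_{j\in J}N_j\big)\ideal{h}\subseteq N$. Taking $m=\max J$ and using that the chain is decreasing, so $\bigcap_{j\in J}N_j=N_m$, this reads $N_m\ideal{h}\subseteq N\subseteq N_s$ for every $s\ge m$; thus the chain is totally $\sigma$--stable.

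For (a)$\Rightarrow$(b): a quotient of a totally $\sigma$--artinian module is again totally $\sigma$--artinian (lift a decreasing chain from $M/N$ to $M$ and push the witnessing ideal back down), so it is enough to prove that a totally $\sigma$--artinian module $M$ is totally $\sigma$--finitely cogenerated. The tool replacing the descending chain condition is: in a totally $\sigma$--artinian module, every nonempty family $\mathcal{S}$ of submodules has a member $K$ such that $K/K'$ is totally $\sigma$--torsion for every $K'\in\mathcal{S}$ with $K'\subseteq K$ — for otherwise dependent choice builds $K_1\supseteq K_2\supseteq\cdots$ with no $K_n/K_{n+1}$ totally $\sigma$--torsion, whereas total $\sigma$--stability of this chain forces $K_m/K_{m+1}$ to be totally $\sigma$--torsion at the stabilization index.

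Now let $\{L_i\}_{i\in I}$ be submodules of $M$ with $P:=\bigcap_{i\in I}L_i$ totally $\sigma$--torsion, say $P\ideal{h}_0=0$. Applying the lemma to the (downward directed) family of all finite subintersections yields a finite $J^*$ with $L_{J^*}:=\bigcap_{j\in J^*}L_j$ minimal in the above sense; hence for each $i$ there is $\ideal{h}_i\in\mathcal{L}(\sigma)$ with $L_{J^*}\ideal{h}_i\subseteq L_i$, and multiplying finitely many of these, $L_{J^*}/\bigcap_{i\in F}(L_{J^*}\cap L_i)$ is totally $\sigma$--torsion for every finite $F\subseteq I$, while $\bigcap_{i\in I}(L_{J^*}\cap L_i)=P$. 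It remains to show $L_{J^*}$ is itself totally $\sigma$--torsion, which finishes the proof with $J=J^*$. For this, well--order $I=\{i_\xi\}_{\xi<\lambda}$, set $L^{(\xi)}=L_{J^*}\cap\bigcap_{\eta<\xi}L_{i_\eta}$, so that $L^{(0)}=L_{J^*}$, $L^{(\lambda)}=P$, and each successor quotient $L^{(\xi)}/L^{(\xi+1)}$ is totally $\sigma$--torsion (killed by $\ideal{h}_{i_\xi}$), and let $\gamma$ be least with $L^{(\gamma)}$ totally $\sigma$--torsion. A successor $\gamma=\delta+1$ forces $L^{(\delta)}$ totally $\sigma$--torsion (compose the killing ideals of $L^{(\delta)}/L^{(\gamma)}$ and $L^{(\gamma)}$), and a limit $\gamma$ of countable cofinality forces some earlier $L^{(\eta)}$ totally $\sigma$--torsion, via total $\sigma$--stability of a cofinal $\omega$--subchain whose witness lands inside $L^{(\gamma)}=\bigcap_{\eta<\gamma}L^{(\eta)}$; either way $\gamma=0$.

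The step I expect to be the main obstacle is exactly this amalgamation: passing from "$L_{J^*}\ideal{h}_i\subseteq L_i$ for each $i$", with the $\ideal{h}_i$ depending on $i$, to "$L_{J^*}\ideal{h}\subseteq P$ for a single $\ideal{h}\in\mathcal{L}(\sigma)$". In the classical Artinian argument the minimal subintersection sits inside every $L_i$ and one concludes at once; here one has only inclusion up to a totally $\sigma$--torsion error, and these errors must be combined over an arbitrary, possibly uncountable, index set. The transfinite bookkeeping above disposes of successor stages and of limits of cofinality $\omega$; ruling out a limit of uncountable cofinality — equivalently, showing such a transfinite descending chain in a totally $\sigma$--artinian module already becomes totally $\sigma$--torsion strictly before $\gamma$ — is the delicate point, and is presumably where one must also use that $M$, being totally $\sigma$--artinian, is $\sigma$--artinian, so that the lattice $C(M,\sigma)$ is artinian.
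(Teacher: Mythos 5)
There is a genuine gap, and it sits exactly where you flag it. Your preliminary ``minimality'' lemma is too weak: it only hands you, for the chosen $K=L_{J^*}$, a \emph{separate} ideal $\ideal{h}'\in\mathcal{L}(\sigma)$ for each $K'\subseteq K$ in the family, and the whole transfinite bookkeeping is then an attempt to amalgamate these non-uniform witnesses across an arbitrary index set. You dispose of successor steps and limits of countable cofinality, but the limit of uncountable cofinality is left open, and nothing in the argument closes it. Your closing guess that the artinian lattice $C(M,\sigma)$ coming from $\sigma$-artinianity rescues this does not work either: that lattice controls $\sigma$-torsion quotients, not totally $\sigma$-torsion ones, so the uniformity problem simply reappears there.

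The paper avoids the amalgamation problem altogether by using a strictly stronger minimal condition, namely the $\sigma$-MIN condition of Section~3: in a totally $\sigma$-artinian module every nonempty family $\mathcal{N}$ of submodules has a \emph{$\sigma$-minimal} element $N$, i.e.\ one for which a \emph{single} $\ideal{h}\in\mathcal{L}(\sigma)$ satisfies $N\ideal{h}\subseteq H$ for \emph{every} $H\in\mathcal{N}$ with $H\subseteq N$. Applying this to the downward-directed family $\Gamma$ of finite subintersections $\cap_{j\in J}H_j$ gives a finite $J$ and one uniform $\ideal{h}$ with $(\cap_{j\in J}H_j)\ideal{h}\subseteq(\cap_{j\in J}H_j)\cap H_i$ for every $i\in I$, hence $(\cap_{j\in J}H_j)\ideal{h}\subseteq\cap_{i\in I}H_i=0$ in one stroke; no transfinite induction is needed, and the index set's cardinality never enters. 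Your (b)$\Rightarrow$(a) direction matches the paper's. So the repair is to replace your weak per-element lemma by the $\sigma$-MIN condition; establishing that \emph{uniform}-witness version, rather than the version you proved, is where the real content of (a)$\Rightarrow$(b) lies.
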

\begin{proof}
(a) $\Rightarrow$ (b). %
Let $\{N_i/N\mid\;i\in{I}\}$ be a family of submodules of $M/N$ such that $\cap_{i\in{I}}(N_i/N)$ is totally $\sigma$--torsion. If $H/N=(\cap_{i\in{I}}N_i)/N=\cap_{i\in{I}}(N_i/N)$, then $H/N$ is totally $\sigma$--torsion and $\cap_{i\in{I}}(N_i/H)=0$. We have a family $\{H_i=N_i/H\mid\;i\in{I}\}$ of submodules of $M/H$ such that $\cap_{i\in{I}}H_i=0$. By the hypothesis, $M/H$ is $\sigma$--artinian, so there are maximal elements in the set
\[
\Gamma=\{\cap_{j\in{J}}H_j\mid\;J\subseteq{I}\textrm{ is finite}\}.
\]
Let $\cap_{j\in{J}}H_j\in\Gamma$ be a minimal element in $\Gamma$. There exists $\ideal{h}\in\mathcal{L}(\sigma)$ such that for any $i\in{I\setminus{J}}$, we have
\[
(\cap_{j\in{J}}H_j)\ideal{h}\subseteq(\cap_{j\in{J}}H_j)\cap{N_i}\subseteq\cap_{j\in{J}}H_j.
\]
In particular, $(\cap_{j\in{J}}H_j)\ideal{h}\subseteq\cap_{i\in{I}}H_i=0$, and $\cap_{j\in{J}}H_j$ is totally $\sigma$--torsion.
\par
(b) $\Rightarrow$ (a). %
Let $N_0\supseteq{N_1}\supseteq\cdots$ be a decreasing chain of submodules of $M$, and define $N=\cap_{n\in\mathbb{N}}N_n$. In $M/N$ the family $\{N_n/N\mid\;n\in\mathbb{N}\}$ satisfies $\cap_{n\in\mathbb{N}}(N_n/N)=0$, hence there exists $J\subseteq\mathbb{N}$, finite, and $\ideal{h}\in\mathcal{L}(\sigma)$ such that $(\cap_{j\in{J}}(N_j/N))\ideal{h}=0$, hence $(N_k/N)\ideal{h}=0$, being $k=\max(J)$, and satisfies $N_k\ideal{h}\subseteq{N}$. Therefore, the decreasing chain $\sigma$--stabilizes.
\end{proof}

Properties about the behaviour of totally $\sigma$--finitely cogenerated and $\sigma$--noetherian modules are collected in the following result.

\medskip
\begin{proposition}\label{pr:20190102}
\begin{enumerate}[(1)]\sepa
\item
Every submodule of a totally $\sigma$--finitely cogenerated $A$--module also is .
\item
For every submodule $N\subseteq{M}$, we have: $M$ is totally $\sigma$--artinian if, and only if, $N$ and $M/N$ are totally $\sigma$--artinian.
\item
Finite direct sums of totally $\sigma$--artinian modules also are.
\end{enumerate}
\end{proposition}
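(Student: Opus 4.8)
The plan is to treat the three items in order, reducing (3) to (2), reducing (2) to a short diagram chase, and disposing of (1) formally.

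For (1): if $M$ is totally $\sigma$--finitely cogenerated and $L\subseteq M$ is a submodule, then any family $\{N_i\mid i\in I\}$ of submodules of $L$ with $\cap_{i\in I}N_i$ totally $\sigma$--torsion is in particular a family of submodules of $M$ with totally $\sigma$--torsion intersection, so the hypothesis applied to $M$ produces a finite $J\subseteq I$ with $\cap_{j\in J}N_j$ totally $\sigma$--torsion. There is no obstacle here. The direction ($\Rightarrow$) of (2) is equally routine: a decreasing chain of submodules of $N$ is a decreasing chain of submodules of $M$, hence totally $\sigma$--stabilises; and a decreasing chain $\overline{L}_0\supseteq\overline{L}_1\supseteq\cdots$ in $M/N$ lifts to a decreasing chain $L_0\supseteq L_1\supseteq\cdots$ of submodules of $M$ all containing $N$, which totally $\sigma$--stabilises, say $L_m\ideal{h}\subseteq L_s$ for $s\geq m$, whence $(L_m/N)\ideal{h}=(L_m\ideal{h}+N)/N\subseteq L_s/N$ and the quotient chain totally $\sigma$--stabilises as well.

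The substantive point is ($\Leftarrow$) of (2). Given a decreasing chain $L_0\supseteq L_1\supseteq\cdots$ of submodules of $M$, I would form the two induced decreasing chains $L_0\cap N\supseteq L_1\cap N\supseteq\cdots$ in $N$ and $(L_0+N)/N\supseteq(L_1+N)/N\supseteq\cdots$ in $M/N$. By hypothesis there are indices $m_1,m_2$ and ideals $\ideal{h}_1,\ideal{h}_2\in\mathcal{L}(\sigma)$ with $(L_{m_1}\cap N)\ideal{h}_1\subseteq L_s\cap N$ for $s\geq m_1$ and $((L_{m_2}+N)/N)\ideal{h}_2\subseteq(L_s+N)/N$ for $s\geq m_2$. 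Set $m=\max(m_1,m_2)$ and $\ideal{h}=\ideal{h}_1\ideal{h}_2$; since a Gabriel filter is closed under products of ideals, $\ideal{h}\in\mathcal{L}(\sigma)$. The claim is $L_m\ideal{h}\subseteq L_s$ for all $s\geq m$. For $x\in L_m$ and $h_2\in\ideal{h}_2$ the displayed inclusion for the quotient chain gives $xh_2\in L_s+N$, say $xh_2=\ell+n$ with $\ell\in L_s$ and $n\in N$; since $x\in L_m$ and $\ell\in L_s\subseteq L_m$, also $n\in L_m\cap N$, so $n\ideal{h}_1\subseteq(L_m\cap N)\ideal{h}_1\subseteq L_s\cap N\subseteq L_s$, and therefore $xh_2\ideal{h}_1=\ell\ideal{h}_1+n\ideal{h}_1\subseteq L_s$; letting $h_2$ range over $\ideal{h}_2$ yields $L_m\ideal{h}_1\ideal{h}_2\subseteq L_s$. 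This is the familiar ``a submodule and the corresponding quotient chain stabilise, hence the middle one stabilises'' argument; the only delicate points are the bookkeeping of the ideals — one multiplies by $\ideal{h}_1$ \emph{after} extracting $n$, so the correct denominator is the product $\ideal{h}_1\ideal{h}_2$ rather than the intersection $\ideal{h}_1\cap\ideal{h}_2$ — together with the closure of $\mathcal{L}(\sigma)$ under products; I expect this to be the main, though still minor, obstacle.

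For (3): by an obvious induction it suffices to treat $M\oplus M'$ with $M$ and $M'$ totally $\sigma$--artinian. Then $M\oplus 0$ is a submodule of $M\oplus M'$ isomorphic to $M$, and the quotient $(M\oplus M')/(M\oplus 0)$ is isomorphic to $M'$; both are totally $\sigma$--artinian, so part (2) shows $M\oplus M'$ is totally $\sigma$--artinian.
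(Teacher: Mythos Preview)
The paper states this proposition without proof, treating it as a routine collection of closure properties. Your argument is correct and is precisely the standard one the author presumably had in mind; in particular your handling of the $(\Leftarrow)$ direction of (2), where the product $\ideal{h}_1\ideal{h}_2$ is needed rather than an intersection, is the right bookkeeping.
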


Also we can build up examples of totally $\sigma$--artinian rings in considering hereditary torsion theories $\sigma_1\leq\sigma_2$. Thus, we have the following lemma, whose proof is straightforward.

\begin{lemma}
Let $\sigma_1\leq\sigma_2$ be hereditary torsion theories in $\rMod{A}$, and $M$ an $A$--module. If $M$ is totally $\sigma_1$--artinian then $M$ is totally $\sigma_2$--artinian.
\end{lemma}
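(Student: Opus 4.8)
The plan is to argue directly from the definition of a totally $\sigma$--stable chain, using only the fact that the order relation $\sigma_1\leq\sigma_2$ on hereditary torsion theories translates into the inclusion $\mathcal{L}(\sigma_1)\subseteq\mathcal{L}(\sigma_2)$ of the associated Gabriel filters.

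First I would fix an arbitrary decreasing chain of submodules $N_0\supseteq N_1\supseteq\cdots$ of $M$. Since $M$ is totally $\sigma_1$--artinian, this chain is totally $\sigma_1$--stable, so there exist an index $m$ and an ideal $\ideal{h}\in\mathcal{L}(\sigma_1)$ with $N_m\ideal{h}\subseteq N_s$ for every $s\geq m$. Because $\mathcal{L}(\sigma_1)\subseteq\mathcal{L}(\sigma_2)$, the very same ideal $\ideal{h}$ lies in $\mathcal{L}(\sigma_2)$, so the identical pair $(m,\ideal{h})$ witnesses that the chain is totally $\sigma_2$--stable. As the chain was arbitrary, $M$ is totally $\sigma_2$--artinian, and the module version of the decreasing chain condition carries over verbatim.

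Alternatively, one could pass through the Theorem above: it suffices to observe that a totally $\sigma_1$--torsion module is totally $\sigma_2$--torsion, and hence that totally $\sigma_1$--finitely cogenerated implies totally $\sigma_2$--finitely cogenerated on all quotients of $M$; both reductions again amount to the containment $\mathcal{L}(\sigma_1)\subseteq\mathcal{L}(\sigma_2)$. There is essentially no genuine obstacle here; the only point worth recording explicitly is that comparability of hereditary torsion theories is precisely comparability of their Gabriel filters, after which the proof is nothing more than substituting one witness ideal for the other.
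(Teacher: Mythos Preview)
Your argument is correct and is exactly the ``straightforward'' verification the paper has in mind; the paper does not spell out a proof at all, simply noting before the lemma that its proof is straightforward. The key observation you isolate---that $\sigma_1\leq\sigma_2$ amounts to $\mathcal{L}(\sigma_1)\subseteq\mathcal{L}(\sigma_2)$, so the same witness pair $(m,\ideal{h})$ works---is precisely what is intended.
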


Regular elements have a particular behaviour with respect to totally $\sigma$--artinian rings.

\begin{lemma}
If $A$ is a totally $\sigma$--artinian ring, for any regular element $a\in{A}$, we have $aA\in\mathcal{L}(\sigma)$.
\end{lemma}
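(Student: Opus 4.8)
The plan is to apply the totally $\sigma$--artinian hypothesis to the descending chain of principal ideals generated by powers of the regular element $a$, namely
\[
aA\supseteq{a^2A}\supseteq{a^3A}\supseteq\cdots,
\]
and then exploit regularity of $a$ to "divide through" by a suitable power of $a$. First I would invoke the definition: since $A$ is totally $\sigma$--artinian, this chain is totally $\sigma$--stable, so there exist an index $m$ and an ideal $\ideal{h}\in\mathcal{L}(\sigma)$ such that $a^mA\,\ideal{h}\subseteq{a^sA}$ for all $s\geq{m}$; in particular, taking $s=m+1$, we get $a^mA\,\ideal{h}\subseteq{a^{m+1}A}$, and hence $a^m\ideal{h}\subseteq{a^{m+1}A}$.

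The key step is then the following: for every $h\in\ideal{h}$ we have $a^mh=a^{m+1}b$ for some $b\in{A}$, i.e.\ $a^m(h-ab)=0$. Since $a$ is regular, so is $a^m$, and therefore $h-ab=0$, that is $h=ab\in{aA}$. As $h\in\ideal{h}$ was arbitrary, this shows $\ideal{h}\subseteq{aA}$. Because $\ideal{h}\in\mathcal{L}(\sigma)$ and $\mathcal{L}(\sigma)$ is a filter of ideals (closed upward under inclusion), it follows that $aA\in\mathcal{L}(\sigma)$, as desired.

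I do not anticipate a serious obstacle here; the only point requiring a little care is making sure the cancellation is legitimate, which is exactly where the hypothesis that $a$ (and hence each power $a^m$) is regular — i.e.\ a non--zero--divisor — is used. One could alternatively phrase the argument using the chain $aA\supseteq a^2A\supseteq\cdots$ and the totally $\sigma$--stable index $m$ directly, cancelling $a^{m-1}$ instead of $a^m$ to reach the same conclusion; either formulation works, and the choice is cosmetic.
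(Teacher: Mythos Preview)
Your proof is correct and follows essentially the same approach as the paper: apply the totally $\sigma$--artinian condition to the descending chain $(a)\supseteq(a^2)\supseteq\cdots$, use $a^m\ideal{h}\subseteq a^{m+1}A$, and cancel $a^m$ by regularity to conclude $\ideal{h}\subseteq aA$. The only differences are cosmetic --- you spell out that $a^m$ is regular and that $\mathcal{L}(\sigma)$ is upward closed, which the paper leaves implicit.
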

\begin{proof}
If $a\in{A}$ is regular, we consider the decreasing chain $(a)\supseteq(a^2)\supseteq\cdots$. By the hypothesis, there exist an index $m$ and $\ideal{h}\in\mathcal{L}(\sigma)$ such that $(a^m)\ideal{h}\subseteq(a^s)$ for every $s\geq{m}$. Thus, for every $h\in\ideal{h}$ there exists $x\in{A}$ such that $a^mh=a^{m+1}x$, hence $h=ax\in{aA}$, which means that $\ideal{h}\subseteq{aA}$, and $aA\in\mathcal{L}(\sigma)$.
\end{proof}

As a consequence, the case of an integral domain is well understood. See~\cite[Corollary~2.2]{Sevim/Tekir/Koc:2020}.

\begin{corollary}
Let $A=D$ be an integral domain, if $D$ is totally $\sigma$--artinian, then $\sigma=\sigma_{D\setminus\{0\}}$, the usual torsion theory on $D$.
\end{corollary}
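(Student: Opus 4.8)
The plan is to invoke the preceding lemma: since $D$ is an integral domain, every nonzero element $a\in D$ is regular, hence by the previous lemma $aD\in\mathcal{L}(\sigma)$ for every $0\neq a\in D$. This immediately shows $\sigma_{D\setminus\{0\}}\leq\sigma$, because the Gabriel filter $\mathcal{L}(\sigma_{D\setminus\{0\}})$ is generated (as a filter) by the principal ideals $aD$ with $a\in D\setminus\{0\}$: any ideal meeting $D\setminus\{0\}$ contains some such $aD$, and a Gabriel filter is closed under passing to larger ideals.

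For the reverse inclusion $\sigma\leq\sigma_{D\setminus\{0\}}$, I would argue that every nonzero ideal of $D$ already lies in $\mathcal{L}(\sigma_{D\setminus\{0\}})$ (it contains a nonzero element $a$, hence $aD$), so it suffices to check that $\mathcal{L}(\sigma)$ contains no nonzero ideal other than those, i.e.\ that the zero ideal $(0)$ is \emph{not} in $\mathcal{L}(\sigma)$ unless $D=0$. But if $(0)\in\mathcal{L}(\sigma)$, then $D/(0)=D\in\mathcal{T}_\sigma$, so every $D$--module is $\sigma$--torsion and $\sigma$ is the improper torsion theory; in that degenerate case $\sigma_{D\setminus\{0\}}$ is improper too (since $0\in D\setminus\{0\}$ forces $D=0$ only if one insists $D\setminus\{0\}$ is multiplicatively closed containing $0$ — here one checks the conventions match), and the statement holds trivially. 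Excluding that, every ideal in $\mathcal{L}(\sigma)$ is nonzero, hence lies in $\mathcal{L}(\sigma_{D\setminus\{0\}})$, giving $\sigma\leq\sigma_{D\setminus\{0\}}$.

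Combining the two inclusions yields $\sigma=\sigma_{D\setminus\{0\}}$, which is exactly the classical torsion theory on $D$ whose torsion modules are the ordinary torsion modules. The main (and only mildly delicate) point is the first inclusion: one must correctly recall that $\mathcal{L}(\sigma_{D\setminus\{0\}})$ is precisely the set of ideals containing a nonzero principal ideal, so that knowing $aD\in\mathcal{L}(\sigma)$ for all $0\neq a$ is enough to force $\mathcal{L}(\sigma_{D\setminus\{0\}})\subseteq\mathcal{L}(\sigma)$; everything else is bookkeeping with the order-reversing correspondence between hereditary torsion theories and Gabriel filters. No serious obstacle is expected, as the heavy lifting is done by the regular-element lemma above.
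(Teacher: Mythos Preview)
Your approach is correct and matches the paper's intended argument: the corollary is recorded there without proof as an immediate consequence of the regular-element lemma, exactly as you use it for the inclusion $\sigma_{D\setminus\{0\}}\leq\sigma$.

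One small correction: your handling of the reverse inclusion $\sigma\leq\sigma_{D\setminus\{0\}}$ is muddled --- the phrase ``$0\in D\setminus\{0\}$ forces $D=0$'' makes no sense, since $0\notin D\setminus\{0\}$ by definition. The clean argument is simply that $\mathcal{L}(\sigma_{D\setminus\{0\}})$ consists of \emph{all} nonzero ideals of $D$, so $\sigma_{D\setminus\{0\}}$ is the largest proper hereditary torsion theory on $D$; hence $\sigma\leq\sigma_{D\setminus\{0\}}$ holds automatically for any proper $\sigma$. If $\sigma$ is improper the conclusion literally fails, so the corollary tacitly assumes $\sigma$ proper. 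This is a cosmetic wrinkle, not a real gap.
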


In particular, we have the following conclusions:
\begin{enumerate}[(1)]\sepa
\item
If $\ideal{p}\subseteq{D}$ is a non--zero prime ideal of an integral domain $D$, and we consider the hereditary torsion theory $\sigma_{D\setminus\ideal{p}}$, then $D$ is never totally $\sigma_{D\setminus\ideal{p}}$--artinian.
\item\label{it:20200103}
For every integral domain $D$, the hereditary torsion theory $\sigma=\sigma_{D\setminus\{0\}}$ satisfies that $D$ is $\sigma$--artinian, but non necessarily $D$ is totally $\sigma$--artinian. Indeed, $D$ is $\sigma$--artinian because $D_\sigma$, the field of fractions of $D$, is artinian. Otherwise the following example shows that the converse non necessarily holds. Let $D=\mathbb{Q}[X_n\mid\;n\in\mathbb{N}]$, and $\ideal{a}_n=(X_0\cdots{X_n})$, for every $n\in\mathbb{N}$; the decreasing chain $\ideal{a}_0\supseteq\ideal{a}_1\supseteq\cdots$ satisfies that there is not $s\in{D\setminus\{0\}}$ neither $m\in\mathbb{N}$ such that $\ideal{a}_ms\subseteq\ideal{a}_s$, for every $s\geq{m}$.
\end{enumerate}

This example raises the following problem:

\medskip
\begin{problem}
Which properties are necessary to add to a $\sigma$--artinian ring to be a totally $\sigma$--artinian ring?
\end{problem}

We refer to Theorem~\eqref{th:20200103} below.

\begin{corollary}
Let $A$ be a totally $\sigma$--artinian ring, and $\Reg(A)$ be the set of all regular elements of $A$, then $\sigma_{\Reg(A)}\leq\sigma$.
\end{corollary}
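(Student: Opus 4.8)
The plan is to deduce this from the preceding lemma, which says that every regular element $a$ of a totally $\sigma$--artinian ring satisfies $aA\in\mathcal{L}(\sigma)$. First I would observe that $\Reg(A)$ is a multiplicatively closed subset of $A$ (a product of two non--zero--divisors is again a non--zero--divisor, and $1\in\Reg(A)$), so that $\sigma_{\Reg(A)}$ is a well--defined hereditary torsion theory as in Example~\eqref{ex:20200103}, with Gabriel filter
\[
\mathcal{L}(\sigma_{\Reg(A)})=\{\ideal{a}\subseteq{A}\mid\;\ideal{a}\cap\Reg(A)\neq\varnothing\}.
\]
Since $\sigma_1\leq\sigma_2$ means exactly $\mathcal{L}(\sigma_1)\subseteq\mathcal{L}(\sigma_2)$, it suffices to prove the containment $\mathcal{L}(\sigma_{\Reg(A)})\subseteq\mathcal{L}(\sigma)$.

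Next, I would take an arbitrary $\ideal{a}\in\mathcal{L}(\sigma_{\Reg(A)})$; by definition there exists a regular element $a\in\ideal{a}\cap\Reg(A)$. By the previous lemma, $aA\in\mathcal{L}(\sigma)$. Now $aA\subseteq\ideal{a}$, and $\mathcal{L}(\sigma)$ is a filter of ideals, hence closed under passing to larger ideals; therefore $\ideal{a}\in\mathcal{L}(\sigma)$. This establishes the containment of Gabriel filters and hence $\sigma_{\Reg(A)}\leq\sigma$.

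There is essentially no obstacle here: the argument is a direct unwinding of definitions together with the cited lemma. The only point deserving a word of care is the final step, where one upgrades the conclusion $aA\in\mathcal{L}(\sigma)$ for the \emph{principal} ideal generated by a single regular element to membership of the possibly larger ideal $\ideal{a}$, which is legitimate precisely because Gabriel filters are upward closed.
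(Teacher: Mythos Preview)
Your proof is correct and follows essentially the same route as the paper: the paper's one--line argument invokes that $\Reg(A)$ is a (saturated) multiplicatively closed set and implicitly the preceding lemma, and you have simply made explicit the Gabriel--filter containment $\mathcal{L}(\sigma_{\Reg(A)})\subseteq\mathcal{L}(\sigma)$ that this entails.
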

\begin{proof}
It is a consequence of the well known fact that $\Reg(A)$ is a saturated multiplicatively closed set.
\end{proof}

Let $A$ be a ring and $T=T(A)$ the \textbf{total ring of fractions} of $A$, i.e., the localization of $A$ at $\Reg(A)$, the multiplicatively closed set of all regular elements, i.e., $T=A_{\sigma_{\Reg(A)}}$. The above example in \eqref{it:20200103} shows that non necessarily $A$ must be totally $\sigma_{\Reg(A)}$--artinian, although it is $\sigma_{\Reg(A)}$--artinian.

We said that an ideal $\ideal{a}\subseteq{A}$ is \textbf{invertible} whenever $\ideal{a}(A:\ideal{a})=A$, being $(A:\ideal{a})=\{x\in{T}\mid\;\ideal{a}x\subseteq{A}\}$.

\begin{corollary}
If $A$ is a totally $\sigma$--artinian ring, every invertible ideal $\ideal{a}$ belongs to $\mathcal{L}(\sigma)$
\end{corollary}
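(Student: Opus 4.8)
The plan is to exploit the fact that an invertible ideal $\ideal{a}$ is, in particular, a finitely generated ideal whose "denominator" can be written using finitely many regular elements, and then combine this with the previously established corollaries. Recall that invertibility means $\ideal{a}(A:\ideal{a})=A$, so we can write $1=\sum_{i=1}^{n}a_i x_i$ with $a_i\in\ideal{a}$ and $x_i\in(A:\ideal{a})\subseteq{T}$. Each $x_i=u_i/s_i$ with $u_i\in{A}$ and $s_i\in\Reg(A)$. Putting $s=s_1\cdots{s_n}\in\Reg(A)$, we get $s\in\ideal{a}$: indeed $s=s\cdot{1}=\sum_i a_i(s x_i)$ and each $s x_i = u_i\prod_{j\neq i}s_j\in{A}$, while $a_i\in\ideal{a}$, so $s\in\ideal{a}$. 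Thus $sA\subseteq\ideal{a}$ with $s$ a regular element.

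Now I would invoke the earlier Lemma stating that if $A$ is totally $\sigma$--artinian, then $aA\in\mathcal{L}(\sigma)$ for every regular element $a\in{A}$. Applied to $s$, this gives $sA\in\mathcal{L}(\sigma)$. Since $\mathcal{L}(\sigma)$ is a filter of ideals, hence upward closed, and $sA\subseteq\ideal{a}$, we conclude $\ideal{a}\in\mathcal{L}(\sigma)$, which is exactly the claim. (Equivalently, one could phrase this via the Corollary giving $\sigma_{\Reg(A)}\leq\sigma$: the ideal $sA$ meets $\Reg(A)$, so $sA\in\mathcal{L}(\sigma_{\Reg(A)})\subseteq\mathcal{L}(\sigma)$, and then use upward closure.)

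The only real point requiring care is the verification that an invertible ideal contains a regular element, i.e., the identity $s\in\ideal{a}$ above; this is the standard observation that invertible ideals are "regular" ideals, and the computation clearing denominators in $T=A_{\sigma_{\Reg(A)}}$ is routine. Everything after that is immediate from the filter property of $\mathcal{L}(\sigma)$ together with the already-proven lemma on regular elements, so there is no substantial obstacle.
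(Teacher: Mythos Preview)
Your proof is correct, but it takes a different route from the paper's own argument. The paper proceeds in direct analogy with the lemma on regular elements: it applies the totally $\sigma$--artinian hypothesis to the descending chain $\ideal{a}\supseteq\ideal{a}^2\supseteq\cdots$, obtains $m$ and $\ideal{h}\in\mathcal{L}(\sigma)$ with $\ideal{a}^m\ideal{h}\subseteq\ideal{a}^{m+1}$, and then \emph{cancels} $\ideal{a}^m$ using invertibility (multiply both sides by $(A:\ideal{a})^m$) to get $\ideal{h}\subseteq\ideal{a}$, whence $\ideal{a}\in\mathcal{L}(\sigma)$. Your approach instead reduces the corollary to the already-proved lemma on regular elements by showing, via the standard clearing-of-denominators computation in $T(A)$, that any invertible ideal contains a regular element $s$, and then uses upward closure of $\mathcal{L}(\sigma)$. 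Both arguments are short and complete; the paper's is self-contained and exploits invertibility as a cancellation property, while yours is a clean reduction that makes explicit that the invertible case is subsumed by the regular-element case (and hence by $\sigma_{\Reg(A)}\leq\sigma$).
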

\begin{proof}
Let $\ideal{a}\subseteq{A}$ be an invertible ideal, we consider the decreasing chain $\ideal{a}\supseteq\ideal{a}^2\supseteq\cdots$. By the hypothesis, there exist an index $m$ and $\ideal{h}\in\mathcal{L}(\sigma)$ such that $\ideal{a}^m\ideal{h}\subseteq\ideal{a}^s$ for every $s\geq{m}$. In particular, $\ideal{a}^m\ideal{h}\subseteq\ideal{a}^{m+1}$, hence $\ideal{h}\subseteq\ideal{a}$, and $\ideal{a}\in\mathcal{L}(\sigma)$.
\end{proof}

\begin{example}
Since invertible ideals are finitely generated ideals, they generate a here\-ditary torsion theory, that we name $\sigma_{\textrm{inv}}$, see \eqref{it:ex:20200103-2} in Example~\eqref{ex:20200103}. If $A$ is a totally $\sigma$--artinian ring, non necessarily $A$ is totally $\sigma_{\textrm{inv}}$--artinian.
\par
Indeed, we can consider the ring $A=\prod_\mathbb{N}\mathbb{Z}_2$, and the prime ideal $\ideal{p}=\prod_{n\geq1}\mathbb{Z}_2$. We know that $A$ is totally $\sAp$--artinian. Since $A$ is a total ring of fractions, every non regular element is invertible, hence $T=T(A)=A$, and the only invertible ideal is the proper $A$, hence $\sigma_{\Reg(A)}=0$. If $A$ were totally $\sigma_{\Reg(A)}$--artinian then $A$ must be exactly artinian, but obviously $A$ is not artinian.
\end{example}

In general, if $A$ is a totally $\sigma$--artinian ring, we have one more property of the hereditary torsion theory $\sigma$.

\begin{proposition}
If $A$ is a totally $\sigma$--artinian ring, the hereditary torsion theory $\sigma$ is of finite type.
\end{proposition}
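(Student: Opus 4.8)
The plan is to use the reformulation that $\sigma$ is of finite type exactly when every $\ideal{h}\in\mathcal{L}(\sigma)$ contains a finitely generated ideal of $\mathcal{L}(\sigma)$; equivalently, when some finitely generated $\ideal{b}\subseteq\ideal{h}$ has $\Clt{\sigma}{A}{\ideal{b}}=A$. So I would assume $\sigma$ is proper and argue by contradiction, fixing $\ideal{a}\in\mathcal{L}(\sigma)$ with $\Clt{\sigma}{A}{\ideal{b}}\subsetneq A$ for every finitely generated $\ideal{b}\subseteq\ideal{a}$ (call such an $\ideal{a}$ a \emph{witness}).

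The first step is to pass to a minimal witness. If $\ideal{c}_1\supseteq\ideal{c}_2\supseteq\cdots$ are ideals of $\mathcal{L}(\sigma)$, total $\sigma$--artinianness gives $m$ and $\ideal{h}\in\mathcal{L}(\sigma)$ with $\ideal{c}_m\ideal{h}\subseteq\bigcap_s\ideal{c}_s$; since $\ideal{c}_m\ideal{h}\in\mathcal{L}(\sigma)$ (Gabriel filters are closed under products) and $\mathcal{L}(\sigma)$ is upward closed, $\bigcap_s\ideal{c}_s\in\mathcal{L}(\sigma)$. Applying this to a descending chain of witnesses shows the intersection is again a witness (a finitely generated dense subideal of the intersection would be one of $\ideal{c}_1$), so a Zorn's Lemma argument yields a witness $\ideal{a}$ minimal under inclusion. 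Any member of $\mathcal{L}(\sigma)$ strictly below $\ideal{a}$ would itself be a witness, so $\ideal{a}$ is a minimal element of $\mathcal{L}(\sigma)$; as $\ideal{a}^2\subseteq\ideal{a}$ and $\ideal{a}^2\in\mathcal{L}(\sigma)$, we get $\ideal{a}^2=\ideal{a}$, and $\ideal{a}$ is not finitely generated.

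Next, using that $\ideal{a}$ is not finitely generated, I would pick $a_1\in\ideal{a}$ and recursively $a_{n+1}\in\ideal{a}\setminus\Clt{\sigma}{A}{(a_1,\dots,a_n)}$, which is possible because $\ideal{a}\not\subseteq\Clt{\sigma}{A}{(a_1,\dots,a_n)}$ (otherwise $A=\Clt{\sigma}{A}{\ideal{a}}\subseteq\Clt{\sigma}{A}{(a_1,\dots,a_n)}\subsetneq A$). Writing $\ideal{b}_n=(a_1,\dots,a_n)$, I would then feed a descending chain built from this data into the hypothesis --- for instance the chain of annihilators $\Ann\big((a_1,\dots,a_n)\big)$, or the chain of $\sigma$--closed submodules $\Clt{\sigma}{\ideal{a}}{(a_n,a_{n+1},\dots)}$ of the totally $\sigma$--artinian module $\ideal{a}$ --- obtaining an index $m$ and $\ideal{h}\in\mathcal{L}(\sigma)$ at which this chain totally $\sigma$--stabilizes, i.e. $\ideal{e}_m\ideal{h}\subseteq\ideal{e}_s$ for all $s\ge m$. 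Unwinding this, and using $\ideal{a}^2=\ideal{a}$, the $\sigma$--density of $\ideal{h}$, and the way annihilators and $\sigma$--closures interact with $\mathcal{L}(\sigma)$ (products and finite intersections of dense ideals are dense, and Cayley--Hamilton applies to the finitely generated ideals $\ideal{b}_n$), the aim is to exhibit a finitely generated $\ideal{b}'\subseteq\ideal{a}$ with $\Clt{\sigma}{A}{\ideal{b}'}=A$, contradicting the choice of $\ideal{a}$.

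I expect the decisive obstacle to be this last passage from ``totally $\sigma$--stabilizes at $m$'' to a genuinely finitely generated dense subideal: the limit $\bigcap_s\ideal{e}_s$ of the descending chain is typically not finitely generated, so the real work is to absorb it --- either into $\ideal{b}_{m-1}=(a_1,\dots,a_{m-1})$ modulo $\sigma$--density, or by showing $\bigcap_s\ideal{e}_s$ is itself totally $\sigma$--finitely generated --- and this is exactly where the idempotence $\ideal{a}^2=\ideal{a}$, the minimality of $\ideal{a}$ in $\mathcal{L}(\sigma)$, and the characterization of total $\sigma$--artinianness by total $\sigma$--finite cogeneration established above must be combined.
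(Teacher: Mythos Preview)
Your proposal is not a proof: the final paragraph openly admits that the ``decisive obstacle'' --- extracting a finitely generated dense subideal from the stabilization data --- is unresolved. You offer two candidate descending chains (annihilators, or $\sigma$--closures of tails) without committing to either, and then state only ``the aim is to exhibit\ldots'' followed by a list of ingredients (idempotence $\ideal{a}^2=\ideal{a}$, Cayley--Hamilton, total $\sigma$--finite cogeneration) with no indication of how they combine. That is a wish list, not an argument. There is also a smaller gap earlier: your Zorn step requires that \emph{arbitrary} descending chains of witnesses have a lower bound that is a witness, but the totally $\sigma$--artinian hypothesis only controls \emph{countable} descending sequences, so the passage to a minimal witness needs more care.

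The paper, by contrast, gives a two--line proof that bypasses all of this: totally $\sigma$--artinian trivially implies $\sigma$--artinian; by the relative Hopkins theorem (see Golan), $\sigma$--artinian implies $\sigma$--noetherian; and it is standard that $\sigma$--noetherian forces $\sigma$ to be of finite type (every $\ideal{h}\in\mathcal{L}(\sigma)$ is $\sigma$--finitely generated, so contains a finitely generated $\ideal{h}'$ with $\ideal{h}/\ideal{h}'$ $\sigma$--torsion, whence $\ideal{h}'\in\mathcal{L}(\sigma)$). You were attempting to reprove a consequence of Hopkins' theorem from scratch; the right move is simply to quote it.
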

\begin{proof}
Since $A$ is totally $\sigma$--artinian, it is $\sigma$--artinian and, by Hopkins' Theorem, $\sigma$--noetherian, hence $\sigma$ is of finite type.
\end{proof}

We are interested in proving stronger results: if $A$ is totally $\sigma$--artinian, then $A$ is totally $\sigma$--noetherian.

\section{Scalar extensions}

Let $f:A\longrightarrow{B}$ be a ring map. For every hereditary torsion theory $\sigma$ in $\rMod{A}$ we may define a new hereditary torsion theory $f(\sigma)$ in $\rMod{B}$ being
\begin{itemize}\sepa
\item
$\mathcal{L}(f(\sigma))=\{\ideal{b}\subseteq{B}\mid\;f^{-1}(\ideal{b})\in\mathcal{L}(\sigma)\}$,
\item
$\mathcal{T}_{f(\sigma)}=\{M_B\mid\;M_A\in\mathcal{T}_\sigma\}$,
\item
$\mathcal{F}_{f(\sigma)}=\{M_B\mid\;M_A\in\mathcal{F}_\sigma\}$.
\end{itemize}
In addition, sometimes, we shall impose the condition that every ideal of $B$ is an extension of an ideal of $A$, i.e., for every ideal $\ideal{b}\subseteq{B}$, there exists an ideal $\ideal{a}\subseteq{A}$ such that $\ideal{b}=f(\ideal{a})B$. With this condition, we have that the Gabriel filter $\mathcal{L}(f(\sigma))$ can be described also as
\[
\mathcal{L}(f(\sigma))=\{f(\ideal{a})B\mid\;\ideal{a}\in\mathcal{L}(\sigma)\}.
\]

\begin{lemma}
Let $f:A\longrightarrow{B}$ be a ring map such that every ideal of $B$ is an extension of an ideal of $A$, and let $\sigma$ be a hereditary torsion theory in $\rMod{A}$ such that $A$ is totally $\sigma$--artinian, then $B$ is totally $\sigma$--artinian.
\end{lemma}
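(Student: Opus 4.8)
The plan is to transfer a decreasing chain of ideals of $B$ to a decreasing chain of ideals of $A$ by pulling back along $f$, apply the hypothesis there, and then push the resulting stabilization data forward to $B$; throughout, ``$B$ is totally $\sigma$--artinian'' is read as ``$B$ is totally $f(\sigma)$--artinian'', the hereditary torsion theory $f(\sigma)$ in $\rMod{B}$ being the one introduced above.

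First I would take a decreasing chain of ideals of $B$, say $\ideal{b}_1\supseteq\ideal{b}_2\supseteq\cdots$, and set $\ideal{a}_n:=f^{-1}(\ideal{b}_n)$, which is a decreasing chain of ideals of $A$. The point where the standing hypothesis (every ideal of $B$ is an extension of an ideal of $A$) enters is the identity $\ideal{b}_n=f(\ideal{a}_n)B$ for every $n$: writing $\ideal{b}_n=f(\ideal{c}_n)B$ with $\ideal{c}_n\subseteq{A}$ gives $\ideal{c}_n\subseteq f^{-1}(f(\ideal{c}_n)B)=\ideal{a}_n$, whence $\ideal{b}_n=f(\ideal{c}_n)B\subseteq f(\ideal{a}_n)B\subseteq\ideal{b}_n$. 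So the pulled-back chain genuinely ``encodes'' the original chain.

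Next, since $A$ is totally $\sigma$--artinian, there exist an index $m$ and $\ideal{h}\in\mathcal{L}(\sigma)$ with $\ideal{a}_m\ideal{h}\subseteq\ideal{a}_s$ for all $s\geq{m}$. Applying $f$ and extending to $B$, and using $(f(\ideal{a}_m)B)(f(\ideal{h})B)=f(\ideal{a}_m\ideal{h})B$, one gets $\ideal{b}_m\,(f(\ideal{h})B)=f(\ideal{a}_m\ideal{h})B\subseteq f(\ideal{a}_s)B=\ideal{b}_s$ for all $s\geq{m}$. Finally, I would check that $\ideal{h}':=f(\ideal{h})B\in\mathcal{L}(f(\sigma))$: by definition this means $f^{-1}(\ideal{h}')\in\mathcal{L}(\sigma)$, and this holds because $\ideal{h}\subseteq f^{-1}(f(\ideal{h})B)$ and $\mathcal{L}(\sigma)$ is a filter. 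Thus $\ideal{b}_m\ideal{h}'\subseteq\ideal{b}_s$ for all $s\geq{m}$ with $\ideal{h}'\in\mathcal{L}(f(\sigma))$, i.e.\ the chain is totally $f(\sigma)$--stable, so $B$ is totally $f(\sigma)$--artinian.

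The proof is almost entirely a translation through the definition of $f(\sigma)$; the only step requiring a little care is the bookkeeping in the first paragraph, namely that the contracted chain $\{f^{-1}(\ideal{b}_n)\}_n$ re-extends to the original chain $\{\ideal{b}_n\}_n$ — and that is precisely the place where the hypothesis that every ideal of $B$ is extended from $A$ is indispensable, since without it $f(f^{-1}(\ideal{b}_n))B$ could be strictly smaller than $\ideal{b}_n$ and the pushed-forward inclusion would land in the wrong ideal.
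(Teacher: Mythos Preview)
Your proof is correct and follows essentially the same route as the paper's: pull back the chain to $A$, apply the totally $\sigma$--artinian hypothesis there, and push the stabilization data forward to $B$ via $f(\sigma)$. Your version is in fact a bit more careful than the paper's, since you take $\ideal{a}_n=f^{-1}(\ideal{b}_n)$ (which is automatically a decreasing chain) and then verify $\ideal{b}_n=f(\ideal{a}_n)B$ using the extension hypothesis, whereas the paper simply chooses $\ideal{a}_i$ with $\ideal{b}_i=f(\ideal{a}_i)B$ and asserts without justification that ``we can obtain a decreasing chain $\ideal{a}_1\supseteq\ideal{a}_2\supseteq\cdots$''.
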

\begin{proof}
Let $\ideal{b}_1\supseteq\ideal{b}_2\supseteq\cdots$ be a decreasing chain of ideals of $B$, and let $\ideal{a}_i\subseteq{A}$ be an ideal such that $\ideal{b}_i=f(\ideal{a}_i)B$; we can obtain a decreasing chain $\ideal{a}_1\supseteq\ideal{a}_2\supseteq\cdots$ of ideals $A$. By the hypothesis, there exist an index $m$ and $\ideal{h}\in\mathcal{L}(\sigma)$ such that $\ideal{a}_m\ideal{h}\subseteq\ideal{a}_s$ for every $s\geq{m}$. In consequence, $\ideal{b}_m\ideal{h}B\subseteq\ideal{b}_s$ for every $s\geq{m}$, and $B$ is totally $f(\sigma)$--artinian.
\end{proof}

\begin{corollary}
Let $A$ be a totally $\sigma$--artinian ring, then we have:
\begin{enumerate}[(1)]\sepa
\item
If for any ideal $\ideal{a}\subseteq{A}$ we consider the canonical projection $p:A\longrightarrow{A/\ideal{a}}$, then $A/\ideal{a}$ is totally $p(\sigma)$--artinian.
\item
If for any multiplicatively closed subset $\Sigma\subseteq{A}$ we consider the canonical map $f:A\longrightarrow\Sigma^{-1}A$, then $\Sigma^{-1}A$ is totally $f(\sigma)$--artinian.
\end{enumerate}
\end{corollary}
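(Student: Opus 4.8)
The plan is to derive both items directly from the preceding Lemma by verifying, in each case, that the ring map under consideration satisfies its hypothesis, namely that every ideal of the target ring is an extension of an ideal of the source ring. Since $A$ is totally $\sigma$--artinian by assumption, once this hypothesis is checked the Lemma immediately yields that the target is totally $f(\sigma)$--artinian (as the proof of the Lemma in fact establishes, writing $\ideal{b}\ideal{h}B$ for the relevant inclusions).

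For (1), I would first recall that for the canonical projection $p:A\longrightarrow{A/\ideal{a}}$ every ideal of $A/\ideal{a}$ is of the form $\ideal{b}/\ideal{a}$ with $\ideal{a}\subseteq\ideal{b}\subseteq{A}$, and that, $p$ being surjective, $\ideal{b}/\ideal{a}=p(\ideal{b})(A/\ideal{a})$. Hence every ideal of $A/\ideal{a}$ is an extension of an ideal of $A$, and the Lemma applies with $f=p$, giving that $A/\ideal{a}$ is totally $p(\sigma)$--artinian.

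For (2), I would invoke the standard fact from commutative algebra that for the localization map $f:A\longrightarrow\Sigma^{-1}A$ every ideal $\ideal{c}$ of $\Sigma^{-1}A$ satisfies $\ideal{c}=\Sigma^{-1}(f^{-1}(\ideal{c}))=f(f^{-1}(\ideal{c}))\,\Sigma^{-1}A$, so that once again every ideal of $\Sigma^{-1}A$ is an extension of an ideal of $A$. Applying the Lemma with $B=\Sigma^{-1}A$ then gives that $\Sigma^{-1}A$ is totally $f(\sigma)$--artinian.

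There is no genuine obstacle here: the mathematical content lies entirely in the Lemma, and this corollary is just its specialization to the two principal constructions of scalar extension, both of which have the feature that every ideal of the target is extended. The only point demanding a little care is the bookkeeping of the torsion theories: one should note that the conclusion produced by the Lemma's proof is that $B$ is totally $f(\sigma)$--artinian, which is exactly what is needed to match the $p(\sigma)$ and $f(\sigma)$ appearing in the two items.
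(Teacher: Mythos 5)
Your proposal is correct and takes exactly the route the paper intends: the corollary is stated immediately after the Lemma with no separate proof, precisely because both cases are direct specializations of it. Your verification that every ideal of $A/\ideal{a}$ and of $\Sigma^{-1}A$ is an extension of an ideal of $A$ supplies the only detail the paper leaves implicit.
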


\begin{corollary}
Let $\ideal{a}\subseteq{A}$ be an ideal, and $p:A\longrightarrow{A/\ideal{a}}$ the canonical projection. The following statements are equivalent:
\begin{enumerate}[(a)]\sepa
\item
$A$ is totally $\sigma$--artinian.
\item
$\ideal{a}$ is totally $\sigma$--artinian and $A/\ideal{a}$ is totally $\sigma$--artinian (equivalently, it is totally $p(\sigma)$--artinian).
\end{enumerate}
\end{corollary}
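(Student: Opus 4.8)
The plan is to deduce this from Proposition~\eqref{pr:20190102}(2) applied to the $A$--module $M=A$ and its submodule $N=\ideal{a}$. The first thing to record is a dictionary between the ``ring'' and ``module'' formulations: the $A$--submodules of $A$ are exactly the ideals of $A$, so ``$A$ is a totally $\sigma$--artinian ring'' is literally the same statement as ``the $A$--module $A$ is totally $\sigma$--artinian''; likewise, since $p$ is surjective, the $A$--submodules of $A/\ideal{a}$ are precisely the ideals of $A/\ideal{a}$, so ``$A/\ideal{a}$ is a totally $\sigma$--artinian ring'' (with $\sigma$ acting through $A$) coincides with ``the $A$--module $A/\ideal{a}$ is totally $\sigma$--artinian''. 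With these identifications in hand, Proposition~\eqref{pr:20190102}(2) gives at once that the $A$--module $A$ is totally $\sigma$--artinian if, and only if, $\ideal{a}$ and $A/\ideal{a}$ are both totally $\sigma$--artinian, which is exactly the equivalence (a) $\Leftrightarrow$ (b).

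It then remains to justify the parenthetical claim that, for the module $A/\ideal{a}$, being totally $\sigma$--artinian over $A$ is the same as being totally $p(\sigma)$--artinian over $A/\ideal{a}$. I would do this at the level of torsion first. For an $A/\ideal{a}$--module $M$ one has $M\ideal{a}=0$, and, by definition of $p(\sigma)$, the Gabriel filter $\mathcal{L}(p(\sigma))$ consists precisely of the ideals $\ideal{c}/\ideal{a}$ with $\ideal{a}\subseteq\ideal{c}$ and $\ideal{c}\in\mathcal{L}(\sigma)$, since $p^{-1}(\ideal{c}/\ideal{a})=\ideal{c}$. Hence, if $M\ideal{h}=0$ with $\ideal{h}\in\mathcal{L}(\sigma)$, then $M\cdot\bigl((\ideal{h}+\ideal{a})/\ideal{a}\bigr)=0$ and $(\ideal{h}+\ideal{a})/\ideal{a}\in\mathcal{L}(p(\sigma))$ because Gabriel filters are closed under passing to larger ideals; conversely, if $M\cdot(\ideal{c}/\ideal{a})=0$ with $\ideal{c}/\ideal{a}\in\mathcal{L}(p(\sigma))$, then $M\ideal{c}=0$ with $\ideal{c}\in\mathcal{L}(\sigma)$. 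So ``totally $\sigma$--torsion'' and ``totally $p(\sigma)$--torsion'' agree for $A/\ideal{a}$--modules. Plugging this into the definition of a totally stable decreasing chain of ideals of $A/\ideal{a}$ shows such a chain is totally $\sigma$--stable exactly when it is totally $p(\sigma)$--stable, giving the stated equivalence. (Alternatively, the implication ``totally $\sigma$--artinian $\Rightarrow$ totally $p(\sigma)$--artinian'' is already contained in the preceding corollary on scalar extensions.)

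I do not expect a genuine obstacle here: the result is essentially a repackaging of Proposition~\eqref{pr:20190102}(2), and the only care required is the bookkeeping that recasts the three rings as $A$--modules and matches the two torsion notions over $A/\ideal{a}$. The single point worth stating carefully is that $\mathcal{L}(p(\sigma))$ always contains an element of the shape $(\ideal{h}+\ideal{a})/\ideal{a}$ whenever $\ideal{h}\in\mathcal{L}(\sigma)$ — which uses nothing more than the upward closure of Gabriel filters — so that annihilating a chain by an ideal in $\mathcal{L}(\sigma)$ and annihilating it by an ideal in $\mathcal{L}(p(\sigma))$ are interchangeable.
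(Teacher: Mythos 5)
Your proposal is correct and takes essentially the route the paper intends: the equivalence (a)\,$\Leftrightarrow$\,(b) is the direct specialization of Proposition~\eqref{pr:20190102}(2) to the module $M=A$ with submodule $N=\ideal{a}$, and your verification that totally $\sigma$--torsion and totally $p(\sigma)$--torsion coincide for $A/\ideal{a}$--modules (via upward closure of Gabriel filters together with $M\ideal{a}=0$) cleanly justifies the parenthetical remark, which the paper leaves implicit.
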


And, as a consequence of Proposition\eqref{pr:20190102}, we have:

\begin{corollary}
Let $A$ be a totally $\sigma$--artinian ring, then every finitely generated $A$--module is totally $\sigma$--artinian.
\end{corollary}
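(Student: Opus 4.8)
The plan is to realize an arbitrary finitely generated $A$--module as a quotient of a finitely generated free module, and then to apply the two relevant closure properties from Proposition~\eqref{pr:20190102}. First I would record the trivial but essential observation that, under the hypothesis, $A$ viewed as a module over itself is totally $\sigma$--artinian: the module-theoretic notion specializes to decreasing chains of submodules of $A$, which are precisely decreasing chains of ideals, so "$A$ is a totally $\sigma$--artinian ring" and "$A_A$ is a totally $\sigma$--artinian module" say the same thing.

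Next I would apply part~(3) of Proposition~\eqref{pr:20190102} to conclude that the finite direct sum $A^{t}=A\oplus\cdots\oplus A$ is totally $\sigma$--artinian for every $t$. Now, given a finitely generated $A$--module $M$ with generators $m_{1},\ldots,m_{t}$, the map $A^{t}\longrightarrow M$ sending the $i$--th standard generator to $m_{i}$ is surjective, so $M\cong A^{t}/N$ with $N$ its kernel. Applying the "only if" direction of part~(2) of Proposition~\eqref{pr:20190102} to the pair $N\subseteq A^{t}$, the fact that $A^{t}$ is totally $\sigma$--artinian forces the quotient $A^{t}/N\cong M$ to be totally $\sigma$--artinian as well.

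There is no genuine obstacle here: both ingredients — stability of the class of totally $\sigma$--artinian modules under finite direct sums and under passage to quotients — have already been established in Proposition~\eqref{pr:20190102}. The only point requiring a moment's care is the identification in the first step between the ring-level hypothesis and the module-level statement it provides, after which the argument is the standard "finitely generated $=$ quotient of free of finite rank" reduction.
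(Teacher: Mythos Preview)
Your argument is correct and is precisely the intended one: the paper simply records the corollary as ``a consequence of Proposition~\eqref{pr:20190102}'' without spelling out details, and the details it has in mind are exactly the ones you give --- $A^{t}$ is totally $\sigma$--artinian by part~(3), and then any finitely generated module, being a quotient of some $A^{t}$, is totally $\sigma$--artinian by part~(2).
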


\section{The minimal condition}\label{se:20190102}

Let $M$ be an $A$--module, after \cite{Sevim/Tekir/Koc:2020}, we establish the following definitions:

\begin{enumerate}[(1)]\sepa
\item
Let $\mathcal{N}\subseteq\mathcal{L}(M)$ be a family of submodules of $M$. An element $N\in\mathcal{N}$ is \textbf{$\sigma$--minimal} if there exists $\ideal{h}\in\mathcal{L}(\sigma)$ such that for every $H\in\mathcal{N}$ such that $H\subseteq{N}$ we have $N\ideal{h}\subseteq{H}$.
\item
The $A$--module $M$ satisfies the \textbf{$\sigma$-MIN condition} if every nonempty family of submodules of $M$ has $\sigma$-minimal elements.
\item
A family $\mathcal{N}$ of submodules of $M$ is \textbf{$\sigma$--lower closed} if for every submodule $H\subseteq{M}$ such that there exist $N\in\mathcal{N}$ and $\ideal{h}\in\mathcal{L}(\sigma)$ satisfying $N\ideal{h}\subseteq{H}$, either equivalently $N\subseteq(H:\ideal{h})$ or equivalently $(H:N)\in\mathcal{L}(\sigma)$, we have $H\in\mathcal{N}$.
\end{enumerate}

We have the following characterization of totally $\sigma$--artinian modules.

\begin{proposition}
Let $M$ be an $A$--module, the following statements are equivalent:
\begin{enumerate}[(a)]\sepa
\item
$M$ is totally $\sigma$--artinian.
\item
Every nonempty $\sigma$--lower closed family of submodules of $M$ has minimal elements.
\item
Every nonempty family of submodules of $M$ has $\sigma$--minimal elements.
\end{enumerate}
\end{proposition}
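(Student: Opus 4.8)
The plan is to prove the three equivalences by a cycle (a)$\Rightarrow$(c)$\Rightarrow$(b)$\Rightarrow$(a), which keeps each implication close to a definition already in hand and avoids re-deriving the chain-condition machinery twice.

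For (a)$\Rightarrow$(c), I would argue by contradiction in the familiar style of the artinian case. Suppose $M$ is totally $\sigma$--artinian but some nonempty family $\mathcal N\subseteq\mathcal L(M)$ has no $\sigma$--minimal element. Pick $N_1\in\mathcal N$. Since $N_1$ is not $\sigma$--minimal, for \emph{every} $\ideal h\in\mathcal L(\sigma)$ there is some $H\in\mathcal N$ with $H\subseteq N_1$ and $N_1\ideal h\not\subseteq H$; in particular (taking $\ideal h=A$, or more carefully running the negation of the definition) there is $N_2\in\mathcal N$, $N_2\subseteq N_1$, with $N_1\ideal h\not\subseteq N_2$ for the relevant $\ideal h$. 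Iterating produces a strictly decreasing-in-the-$\sigma$-sense chain $N_1\supseteq N_2\supseteq\cdots$ which is \emph{not} totally $\sigma$--stable: no single $\ideal h\in\mathcal L(\sigma)$ and index $m$ can satisfy $N_m\ideal h\subseteq N_s$ for all $s\ge m$, because at each step we chose $N_{s+1}$ so as to defeat the would-be witness. This contradicts total $\sigma$--artinianness. The delicate point here is organising the quantifiers: ``not $\sigma$--minimal'' is $\forall\ideal h\,\exists H$, so one must choose the $N_{s+1}$ \emph{after} fixing a candidate $\ideal h$, and then diagonalise; I expect this bookkeeping to be the main obstacle in getting a clean argument rather than a hand-wave.

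For (c)$\Rightarrow$(b): a $\sigma$--lower closed family $\mathcal N$ that is nonempty has, by (c), a $\sigma$--minimal element $N$, witnessed by some $\ideal h\in\mathcal L(\sigma)$, so that $H\in\mathcal N$, $H\subseteq N$ implies $N\ideal h\subseteq H$. I claim $N\ideal h$ is actually a minimal element of $\mathcal N$: first, $N\ideal h\in\mathcal N$ because $\mathcal N$ is $\sigma$--lower closed (take the submodule $H=N\ideal h$, which satisfies $N\ideal h\subseteq H$, trivially, with the same $\ideal h$); second, if $H\in\mathcal N$ and $H\subseteq N\ideal h\subseteq N$, then $\sigma$--minimality gives $N\ideal h\subseteq H$, so $H=N\ideal h$. (One should also note $N\ideal h\subseteq N$, so the $\sigma$--minimality hypothesis applies.) Thus $N\ideal h$ is a genuine minimal element.

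For (b)$\Rightarrow$(a): given a descending chain $N_0\supseteq N_1\supseteq\cdots$, consider the family $\mathcal N$ of all submodules $H\subseteq M$ such that $N_n\ideal h\subseteq H$ for some $n$ and some $\ideal h\in\mathcal L(\sigma)$ — equivalently $(H:N_n)\in\mathcal L(\sigma)$ for some $n$. This family is nonempty (each $N_n$ is in it, via $\ideal h=A$) and is $\sigma$--lower closed essentially by construction, using that $\mathcal L(\sigma)$ is a filter closed under the operations $(-:a)$: if $N\ideal h\subseteq H'$ for some $N\in\mathcal N$ and $N_n\ideal k\subseteq N$, then $N_n\ideal k\ideal h\subseteq H'$ and $\ideal k\ideal h\in\mathcal L(\sigma)$. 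By (b), $\mathcal N$ has a minimal element $H_0$, with $N_m\ideal h_0\subseteq H_0$ say. Now for any $s\ge m$, the submodule $N_m\ideal h_0\cap N_s$ lies in $\mathcal N$ (it contains $N_s\ideal h_0$ since $N_s\subseteq N_m$ — wait, rather: $N_m\ideal h_0\cap N_s$ contains $N_s\cdot\ideal h_0\cap\cdots$; more simply, $N_m\ideal h_0\cap N_s\supseteq N_s\ideal h_0\cap N_s = $ hmm) — the cleanest route, mirroring the proof of Theorem~\eqref{th:20200103}'s analogue above, is: $H_0\cap N_s\in\mathcal N$ because it contains $N_m\ideal h_0\cap N_s\supseteq$ (some $N_t\ideal k$), and $H_0\cap N_s\subseteq H_0$, so by minimality $H_0\cap N_s=H_0$, i.e. $H_0\subseteq N_s$, whence $N_m\ideal h_0\subseteq H_0\subseteq N_s$ for all $s\ge m$. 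That is precisely total $\sigma$--stability of the chain, so $M$ is totally $\sigma$--artinian. I expect (b)$\Rightarrow$(a) to go through smoothly once the family $\mathcal N$ is set up correctly; the only care needed is verifying $\sigma$--lower closedness from the filter axioms, which is routine.
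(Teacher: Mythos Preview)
The paper does not actually prove this proposition: it only states the result and remarks that the special case $\sigma=\sigma_\Sigma$ is Theorem~2.1 of \cite{Sevim/Tekir/Koc:2020}. So there is no ``paper's proof'' to compare against; your proposal must stand on its own.

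Your implications (c)$\Rightarrow$(b) and (b)$\Rightarrow$(a) are correct. In (c)$\Rightarrow$(b), the observation that $N\ideal{h}$ itself is a \emph{genuine} minimal element of a $\sigma$--lower closed family is exactly right. In (b)$\Rightarrow$(a), once you note that $H_0\cap N_s\in\mathcal{N}$ (because $N_{\max(m,s)}\ideal{h}_0\subseteq N_m\ideal{h}_0\cap N_s\subseteq H_0\cap N_s$), minimality of $H_0$ forces $H_0\subseteq N_s$ and the chain is totally $\sigma$--stable. The hesitation in your write-up is only cosmetic.

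The genuine gap is in (a)$\Rightarrow$(c). You correctly identify the quantifier problem: ``$N$ is not $\sigma$--minimal'' unpacks to $\forall\ideal{h}\in\mathcal{L}(\sigma)\ \exists H$, while ``the chain is not totally $\sigma$--stable'' requires $\forall m\ \forall\ideal{h}\ \exists s$. Your plan is to ``diagonalise'', choosing each $N_{s+1}$ to defeat a candidate $\ideal{h}$. But $\mathcal{L}(\sigma)$ is typically uncountable with no countable cofinal subfamily, so a countable chain cannot enumerate enough candidate $\ideal{h}$'s to defeat them all. Concretely: any chain you build will, by (a), become totally $\sigma$--stable with some witness $(m,\ideal{h}^\ast)$; since $N_m$ was assumed not $\sigma$--minimal, there \emph{is} an $H\subseteq N_m$ in $\mathcal{N}$ with $N_m\ideal{h}^\ast\not\subseteq H$ --- but nothing in your construction forces this $H$ to appear among the $N_s$. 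Calling this ``bookkeeping'' undersells the difficulty: it is the heart of the matter, and a naive recursive construction does not close it.

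One route that avoids the diagonalisation entirely is to prove (a)$\Rightarrow$(b) directly and then use your (b)$\Rightarrow$(c) (which follows by taking the $\sigma$--lower closure $\overline{\mathcal{N}}=\{H:\exists N\in\mathcal{N},\,\ideal{h}\in\mathcal{L}(\sigma),\ N\ideal{h}\subseteq H\}$ and pulling a minimal element back). For (a)$\Rightarrow$(b) one can argue that in a $\sigma$--lower closed family every countable descending chain $N_1\supsetneq N_2\supsetneq\cdots$ has a lower bound \emph{in $\mathcal{N}$}: by (a) there exist $m,\ideal{h}$ with $N_m\ideal{h}\subseteq\bigcap_s N_s$, and $\sigma$--lower closedness then puts $\bigcap_s N_s$ into $\mathcal{N}$. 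A Zorn-type argument on $\mathcal{N}$ (with care about uncountable chains, handled by iterating the countable case through limit ordinals of countable cofinality) then produces a minimal element. This still needs to be written out, but it is a more promising line than the diagonalisation you sketch.
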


If we have $\Sigma\subseteq{A}$ a multiplicatively closed subset of $A$ and $\sigma=\sigma_\Sigma$, this proposition is Theorem 2.1 in \cite{Sevim/Tekir/Koc:2020}.

Let $\sigma$ be a hereditary torsion theory in $\rMod{A}$; an $A$--module $M$ is
\begin{enumerate}[(1)]\sepa
\item
\textbf{$\sigma$--finitely cogenerated} if for any family of submodules $\{N_i\mid\;i\in{I}\}$ of $M$ such that $\cap_{i\in{I}}N_i$ is $\sigma$--torsion, there exists a finite subset $J\subseteq{I}$ such that $\cap_{j\in{J}}N_j$ is $\sigma$--torsion.
\item
\textbf{totally $\sigma$--finitely cogenerated} if for any family of submodules $\{N_i\mid\;i\in{I}\}$ of $M$ such that $\cap_{i\in{I}}N_i$ is totally $\sigma$--torsion, there exists a finite subset $J\subseteq{I}$ such that $\cap_{j\in{J}}N_j$ is totally $\sigma$--torsion.
\item
\textbf{strongly totally $\sigma$--finitely cogenerated} if for any family of submodules $\{N_i\mid\;i\in{I}\}$ of $M$ such that $\cap_{i\in{I}}N_i=0$, there exists a finite subset $J\subseteq{I}$ such that $\cap_{j\in{J}}N_j$ is totally $\sigma$--torsion.
\end{enumerate}

We are mainly interested in modules $M$ such that every quotient $M/N$ is $\sigma$--finitely cogenerated (resp. totally $\sigma$--finitely cogenerated). For that reason we weaken the condition $\cap_{i\in{I}}N_i$ is $\sigma$--torsion (resp. totally $\sigma$--torsion) to simply consider that $\cap_{i\in{I}}N_i=0$, obtaining in this way the strongly totally $\sigma$--cogenerated modules. These modules will be also useful in order to compare hereditary torsion theories because if $\sigma_1\leq\sigma_2$, an $A$--module $M$ may be totally $\sigma_1$--finitely cogenerated and non necessarily totally $\sigma_2$--finitely cogenerated.

Now we can give another characterization of totally $\sigma$--artinian modules in the following way.

\begin{theorem}\label{th:20200104}
Let $A$ be a ring and $\sigma$ a hereditary torsion theory in $\rMod{A}$, for any $A$--module $M$ the following statements are equivalent:
\begin{enumerate}[(a)]\sepa
\item\label{it:th:20200104-1}
$M$ is totally $\sigma$--artinian.
\item\label{it:th:20200104-2}
Every quotient $M/N$ of $M$ is strongly totally $\sigma$--finitely cogenerated.
\item\label{it:th:20200104-3}
Every quotient $M/N$ of $M$ is totally $\sigma$--finitely cogenerated.
\end{enumerate}
\end{theorem}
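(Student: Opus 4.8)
The plan is to close a cycle of implications by relying on the characterization of totally $\sigma$--artinian modules already established in Section~1 (a module is totally $\sigma$--artinian if and only if each of its quotients is totally $\sigma$--finitely cogenerated), which is precisely the equivalence (a) $\Leftrightarrow$ (c) of the present statement. Hence it is enough to prove (c) $\Rightarrow$ (b) $\Rightarrow$ (a).

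The implication (c) $\Rightarrow$ (b) is purely formal. Fix a submodule $N\subseteq M$ and a family $\{N_i/N\mid i\in I\}$ of submodules of $M/N$ with $\cap_{i\in I}(N_i/N)=0$. The zero module is (trivially) totally $\sigma$--torsion, being annihilated by $A\in\mathcal{L}(\sigma)$, so this is a family whose intersection is totally $\sigma$--torsion; since $M/N$ is totally $\sigma$--finitely cogenerated by (c), some finite $J\subseteq I$ has $\cap_{j\in J}(N_j/N)$ totally $\sigma$--torsion. Thus every quotient $M/N$ is strongly totally $\sigma$--finitely cogenerated.

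For (b) $\Rightarrow$ (a) I would argue exactly as in the implication (b) $\Rightarrow$ (a) of the first Theorem of Section~1. Let $N_0\supseteq N_1\supseteq\cdots$ be a decreasing chain of submodules of $M$, put $N=\cap_{n\in\mathbb{N}}N_n$, and look at the family $\{N_n/N\mid n\in\mathbb{N}\}$ in $M/N$; its intersection is $0$. By (b) the quotient $M/N$ is strongly totally $\sigma$--finitely cogenerated, so there are a finite $J\subseteq\mathbb{N}$ and $\ideal{h}\in\mathcal{L}(\sigma)$ with $(\cap_{j\in J}(N_j/N))\ideal{h}=0$. Writing $k=\max J$ we get $\cap_{j\in J}N_j=N_k$, hence $N_k\ideal{h}\subseteq N$, and since $N\subseteq N_s$ for every $s$ this gives $N_k\ideal{h}\subseteq N_s$ for all $s\ge k$; so the chain is totally $\sigma$--stable and $M$ is totally $\sigma$--artinian.

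I do not expect a real obstacle. The only substantive point is conceptual: for a single module, ``strongly totally $\sigma$--finitely cogenerated'' is a priori strictly weaker than ``totally $\sigma$--finitely cogenerated'' (it controls only families with intersection exactly $0$), so the equivalence genuinely needs the hypothesis on all quotients $M/N$ --- and that is exactly what is used in passing to $M/N$ with $N=\cap_n N_n$ in the step above. If one instead wished to prove (b) $\Rightarrow$ (c) directly, the one extra ingredient is that $\mathcal{L}(\sigma)$ is closed under products of ideals: given a family in $M/N$ whose intersection $H/N$ is totally $\sigma$--torsion, apply (b) to $M/H$ to obtain a finite subfamily and $\ideal{h}_1\in\mathcal{L}(\sigma)$ annihilating that subintersection modulo $H$, choose $\ideal{h}_2\in\mathcal{L}(\sigma)$ with $H\ideal{h}_2\subseteq N$, and observe that $\ideal{h}_1\ideal{h}_2\in\mathcal{L}(\sigma)$ annihilates it modulo $N$.
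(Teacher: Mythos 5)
Your proof is correct and follows essentially the same approach as the paper: the paper re-proves $(a)\Rightarrow(b)$ and $(a)\Rightarrow(c)$ directly (repeating the argument from the first Theorem of Section~1) and then gives the single descending-chain argument for $(b)\Rightarrow(a)$ and $(c)\Rightarrow(a)$, whereas you close the loop as $(a)\Leftrightarrow(c)\Rightarrow(b)\Rightarrow(a)$ by citing that earlier theorem for $(a)\Leftrightarrow(c)$ and observing the trivial $(c)\Rightarrow(b)$. The key step --- passing to $M/N$ with $N=\cap_n N_n$ and using the finite subfamily plus the companion ideal $\ideal{h}$ to show the chain is totally $\sigma$--stable --- is identical.
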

\begin{proof}
\eqref{it:th:20200104-1} $\Rightarrow$ \eqref{it:th:20200104-2}. %
Let $\{N_i/N\mid\;i\in{I}\}$ be a family of submodules of $M/N$ such that $\cap_{i\in{I}}(N_i/N)=0$, hence $\cap_{i\in{I}}N_i=N$. By the hypothesis, $M/N$ is totally $\sigma$--artinian, so there are maximal elements in the set
\[
\Gamma=\{\cap_{j\in{J}}N_j\mid\;J\subseteq{I}\textrm{ is finite}\}.
\]
Let $\cap_{j\in{J}}N_j\in\Gamma$ be a minimal element in $\Gamma$. There exists $\ideal{h}\in\mathcal{L}(\sigma)$ such that for any $i\in{I\setminus{J}}$, we have
\[
(\cap_{j\in{J}}N_j)\ideal{h}\subseteq(\cap_{j\in{J}}N_j)\cap{N_i}\subseteq\cap_{j\in{J}}N_j.
\]
In particular, $(\cap_{j\in{J}}N_j)\ideal{h}\subseteq\cap_{i\in{I}}N_i=0$, and $\cap_{j\in{J}}H_j$ is totally $\sigma$--torsion.
\par
\eqref{it:th:20200104-1} $\Rightarrow$ \eqref{it:th:20200104-3}. %
Let $\{N_i/N\mid\;i\in{I}\}$ be a family of submodules of $M/N$ such that $\cap_{i\in{I}}(N_i/N)$ is totally $\sigma$--torsion. If $H/N=(\cap_{i\in{I}}N_i)/N=\cap_{i\in{I}}(N_i/N)$, then $H/N$ is totally $\sigma$--torsion and $\cap_{i\in{I}}(N_i/H)=0$. We have a family $\{H_i=N_i/H\mid\;i\in{I}\}$ of submodules of $M/H$ such that $\cap_{i\in{I}}H_i=0$. By the hypothesis, $M/H$ is $\sigma$--artinian, so there are maximal elements in the set
\[
\Gamma=\{\cap_{j\in{J}}H_j\mid\;J\subseteq{I}\textrm{ is finite}\}.
\]
Let $\cap_{j\in{J}}H_j\in\Gamma$ be a minimal element in $\Gamma$. There exists $\ideal{h}\in\mathcal{L}(\sigma)$ such that for any $i\in{I\setminus{J}}$, we have
\[
(\cap_{j\in{J}}H_j)\ideal{h}\subseteq(\cap_{j\in{J}}H_j)\cap{H_i}\subseteq\cap_{j\in{J}}H_j.
\]
In particular, $(\cap_{j\in{J}}H_j)\ideal{h}\subseteq\cap_{i\in{I}}H_i=0$, and $\cap_{j\in{J}}H_j$ is totally $\sigma$--torsion. Therefore also $\cap_{j\in{J}}(N_j/H)$ and $\cap_{j\in{J}}(N_j/N)$ are totally $\sigma$--torsion
\par
\eqref{it:th:20200104-2} $\Rightarrow$ \eqref{it:th:20200104-1},
\eqref{it:th:20200104-3} $\Rightarrow$ \eqref{it:th:20200104-1}. %
Let $N_0\supseteq{N_1}\supseteq\cdots$ be a decreasing chain of submodules of $M$, and define $N=\cap_{n\in\mathbb{N}}N_n$. In $M/N$ the family $\{N_n/N\mid\;n\in\mathbb{N}\setminus\{0\}\}$ satisfies $\cap_{n\in\mathbb{N}}(N_n/N)=0$, hence there exist $J\subseteq\mathbb{N}$, finite, and $\ideal{h}\in\mathcal{L}(\sigma)$ such that $(\cap_{j\in{J}}(N_j/N))\ideal{h}=0$, hence $(N_k/N)\ideal{h}=0$, being $k=\max(J)$, and satisfies $N_k\ideal{h}\subseteq{N}$. Therefore the decreasing chain $\sigma$--stabilizes.
\end{proof}

\begin{lemma}\label{le:20191204}
Let $M$ be an $A$--module and $T\subseteq{M}$ be a totally $\sigma$--torsion submodule, the following statements are equivalent:
\begin{enumerate}[(a)]\sepa
\item
$M$ is totally $\sigma$--artinian.
\item
$M/T$ is totally $\sigma$--artinian.
\end{enumerate}
\end{lemma}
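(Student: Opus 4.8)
The plan is to obtain this as an immediate consequence of Proposition~\eqref{pr:20190102}(2), which for an arbitrary submodule $N\subseteq M$ already gives the equivalence: $M$ is totally $\sigma$--artinian if and only if both $N$ and $M/N$ are totally $\sigma$--artinian. Applying this with $N=T$, the only ingredient needed beyond that proposition is the observation that the submodule $T$ is itself totally $\sigma$--artinian. Once that is granted, ``$M$ totally $\sigma$--artinian'' and ``$M/T$ totally $\sigma$--artinian'' become equivalent, which is exactly the claim.

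So the first (and essentially only) step is to make explicit the remark recorded after Proposition~\eqref{pr:20190102} that a totally $\sigma$--torsion module is totally $\sigma$--artinian. By definition of totally $\sigma$--torsion there is $\ideal{h}\in\mathcal{L}(\sigma)$ with $T\ideal{h}=0$. Given any descending chain $T_1\supseteq T_2\supseteq\cdots$ of submodules of $T$, we have $T_1\ideal{h}\subseteq T\ideal{h}=0\subseteq T_s$ for every $s$, so the chain is totally $\sigma$--stable already at the index $m=1$ with the single ideal $\ideal{h}$. Hence $T$ is totally $\sigma$--artinian. Now $(a)\Rightarrow(b)$ is the ``$M/N$'' half of Proposition~\eqref{pr:20190102}(2) with $N=T$ (this half uses nothing about $T$), and $(b)\Rightarrow(a)$ combines the hypothesis that $M/T$ is totally $\sigma$--artinian with the fact just proved that $T$ is totally $\sigma$--artinian, via the ``if'' part of Proposition~\eqref{pr:20190102}(2).

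I do not expect any real obstacle here: the whole content lies in Proposition~\eqref{pr:20190102}(2) together with the trivial fact about torsion modules. If one preferred to avoid citing Proposition~\eqref{pr:20190102}, an alternative route is through Theorem~\eqref{th:20200104}: for a submodule $N\subseteq M$, the module $(N+T)/N$ is a homomorphic image of $T$, hence totally $\sigma$--torsion, and $M/(N+T)=(M/N)/((N+T)/N)$ is a quotient of $M/T$, hence totally $\sigma$--finitely cogenerated by hypothesis; a short supplementary argument that ``totally $\sigma$--finitely cogenerated'' is preserved under extension by a totally $\sigma$--torsion submodule then shows every quotient $M/N$ is totally $\sigma$--finitely cogenerated, and Theorem~\eqref{th:20200104} finishes the proof. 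The first route is shorter and self-contained given Proposition~\eqref{pr:20190102}, so that is the one I would write up.
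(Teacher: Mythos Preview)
Your proposal is correct and follows essentially the same approach as the paper: the paper's entire proof is the one-line remark that the lemma is a direct consequence of Proposition~\eqref{pr:20190102}, since every totally $\sigma$--torsion module is totally $\sigma$--artinian. Your write-up simply makes that observation explicit (verifying the chain condition for $T$ via $T\ideal{h}=0$), which is a welcome elaboration but not a different argument.
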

It is a direct consequence  of Proposition~\eqref{pr:20190102}, since every totally $\sigma$--torsion module is totally $\sigma$--artinian.

In the same line, we find that finitely cogenerated modules have their own characterization. The following is Theorem~3.4 in \cite{Sevim/Tekir/Koc:2020}.

\begin{theorem}
Let $M$ be an $A$--module, the following statements are equivalent:
\begin{enumerate}[(a)]\sepa
\item
$M$ is finitely cogenerated.
\item
$M$ is strongly totally $\sAp$--finitely cogenerated, for every $\ideal{p}\in\Spec(A)$.
\item
$M$ is strongly totally $\sAm$--finitely cogenerated, for every $\ideal{m}\in\Max(A)$.
\end{enumerate}
\end{theorem}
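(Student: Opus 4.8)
The plan is to establish the cycle $(a)\Rightarrow(b)\Rightarrow(c)\Rightarrow(a)$, almost all of whose content lies in the last implication. For $(a)\Rightarrow(b)$: if $M$ is finitely cogenerated and $\{N_i\mid i\in I\}$ is a family of submodules with $\cap_{i\in I}N_i=0$, then already some finite $J\subseteq I$ has $\cap_{j\in J}N_j=0$, and the zero submodule is totally $\tau$--torsion for every hereditary torsion theory $\tau$ (it is killed by $A\in\mathcal{L}(\tau)$); hence $M$ is strongly totally $\sAp$--finitely cogenerated for every $\ideal{p}\in\Spec(A)$. The implication $(b)\Rightarrow(c)$ is immediate, since $\Max(A)\subseteq\Spec(A)$.

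For $(c)\Rightarrow(a)$ I would first translate the torsion condition into an arithmetic one. Since $\mathcal{L}(\sAm)$ consists exactly of the ideals not contained in $\ideal{m}$, a submodule $N\subseteq M$ is totally $\sAm$--torsion if and only if there is $s\in A\setminus\ideal{m}$ with $Ns=0$, that is, if and only if $\Ann(N)\not\subseteq\ideal{m}$. Now take any family $\{N_i\mid i\in I\}$ of submodules of $M$ with $\cap_{i\in I}N_i=0$. For each $\ideal{m}\in\Max(A)$, hypothesis $(c)$ provides a finite $J_{\ideal{m}}\subseteq I$ such that $\cap_{j\in J_{\ideal{m}}}N_j$ is totally $\sAm$--torsion; set $\ideal{a}_{\ideal{m}}=\Ann\bigl(\cap_{j\in J_{\ideal{m}}}N_j\bigr)$, so that $\ideal{a}_{\ideal{m}}\not\subseteq\ideal{m}$.

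Consequently $\sum_{\ideal{m}\in\Max(A)}\ideal{a}_{\ideal{m}}$ lies in no maximal ideal, hence equals $A$; expressing $1$ as a finite sum of elements of the $\ideal{a}_{\ideal{m}}$ yields maximal ideals $\ideal{m}_1,\dots,\ideal{m}_n$ with $\ideal{a}_{\ideal{m}_1}+\cdots+\ideal{a}_{\ideal{m}_n}=A$. Put $J=J_{\ideal{m}_1}\cup\cdots\cup J_{\ideal{m}_n}$, a finite subset of $I$, and $K=\cap_{j\in J}N_j$. Since $K$ is contained in each $\cap_{j\in J_{\ideal{m}_k}}N_j$, we get $\ideal{a}_{\ideal{m}_k}\subseteq\Ann(K)$ for all $k$, so $\Ann(K)\supseteq\ideal{a}_{\ideal{m}_1}+\cdots+\ideal{a}_{\ideal{m}_n}=A$ and therefore $K=0$. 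Thus $\{N_i\mid i\in I\}$ admits a finite subfamily with zero intersection, i.e. $M$ is finitely cogenerated. The only step that is not completely formal is the dictionary ``totally $\sAm$--torsion $\Leftrightarrow$ annihilator not contained in $\ideal{m}$''; once that is in hand, the rest is the familiar observation that a family of ideals lying in no common maximal ideal already contains a finite comaximal subfamily, so I do not anticipate a genuine obstacle.
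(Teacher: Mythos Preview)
Your proof is correct and follows essentially the same route as the paper: both arguments handle $(a)\Rightarrow(b)\Rightarrow(c)$ trivially and, for $(c)\Rightarrow(a)$, use that each maximal ideal $\ideal{m}$ yields a finite $J_{\ideal{m}}$ and an element (or ideal) outside $\ideal{m}$ annihilating $\cap_{j\in J_{\ideal{m}}}N_j$, then pass to a finite set of maximal ideals via the ``no common maximal ideal $\Rightarrow$ finite comaximal subfamily'' trick. The only cosmetic difference is that the paper works with single witnesses $s_{\ideal{m}}\in A\setminus\ideal{m}$ while you use the full annihilator $\ideal{a}_{\ideal{m}}$; the logic is identical.
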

\begin{proof}
(a) $\Rightarrow$ (b) $\Rightarrow$ (c). %
They are obvious.
\par
(c) $\Rightarrow$ (a). %
Let $\{N_i\mid\;i\in{I}\}$ be a family of submodules such that $\cap_{i\in{I}}N_i=0$, for every maximal ideal $\ideal{m}\subseteq{A}$ there exist a finite subset $J_\ideal{m}\subseteq{I}$ and $s_\ideal{m}\in{A\setminus\ideal{m}}$ such that $(\cap_{j\in{J_\ideal{m}}}N_j)s_\ideal{m}=0$. If $\ideal{h}=\langle{s_\ideal{m}\mid\;\ideal{m}\in\Max(A)}\rangle\neq{A}$ there exists a maximal ideal $\ideal{m}$ such that $\ideal{h}\subseteq\ideal{m}$, which is a contradiction. Therefore, $\ideal{h}=A$, and there are maximal ideals $\ideal{m}_1,\ldots,\ideal{m}_t$ such that $\langle{s_{\ideal{m}_1},\ldots,s_{\ideal{m}_t}}\rangle=A$. We define $J=\cup_{i=1}^tJ_{\ideal{m}_i}\subseteq{I}$, which is finite, and satisfies $(\cap_{j\in{J}}N_j)s_{\ideal{m}_i}=0$, for every $i=1,\ldots,t$. Hence $\cap_{j\in{J}}N_j=(\cap_{j\in{J}}N_j)\langle{s_{\ideal{m}_1},\ldots,s_{\ideal{m}_t}}\rangle=0$, and $M$ is finitely cogenerated.
\end{proof}

Also, if $A=D$ is an integral domain strongly totally $\sigma$--finitely cogenerated then $D$ is a field, whenever $\sigma\neq0$.

\begin{proposition}
If $D$ is a $\sigma$--torsionfree strongly totally $\sigma$--finitely cogenerated integral domain, then $D$ is a field. The converse always holds.
\end{proposition}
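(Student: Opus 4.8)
The plan is to use the integral--domain hypothesis to collapse ``totally $\sigma$--torsion'' to ``zero'' for ideals, which reduces the statement to the classical fact that a finitely cogenerated domain is a field.

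First I would record two elementary remarks. Since $D$ is $\sigma$--torsionfree and nonzero, $(0)\notin\mathcal{L}(\sigma)$: otherwise $D=D/(0)\in\mathcal{T}_\sigma$, i.e. $\sigma D=D\neq 0$, a contradiction. Consequently the only totally $\sigma$--torsion ideal of $D$ is $(0)$: if $\ideal{b}\ideal{h}=0$ with $\ideal{h}\in\mathcal{L}(\sigma)$ and $0\neq b\in\ideal{b}$, then $b\ideal{h}=0$ forces $\ideal{h}=(0)$ because $D$ is a domain, contradicting the first remark. Hence, for our $D$, the assertion ``$D$ is strongly totally $\sigma$--finitely cogenerated'' says exactly that $D$, as a module over itself, is finitely cogenerated: every family of ideals with zero intersection has a finite subfamily whose intersection is totally $\sigma$--torsion, i.e. is zero.

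It then remains to show that a finitely cogenerated integral domain $D$ is a field, which I would do by contradiction. If $D$ is not a field, let $K\neq D$ be its field of fractions and consider the family $\{(D:y)\mid y\in K\setminus D\}$ of ideals, where $(D:y)=\{d\in D\mid dy\in D\}$. Writing each $y=a/b$ with $b\neq 0$ shows $b\in(D:y)$, and for any $y_1,\dots,y_n\in K\setminus D$ the product of the chosen denominators is a nonzero element of $\bigcap_k(D:y_k)$; so every finite subfamily has nonzero intersection. On the other hand a common nonzero element $d$ of all the $(D:y)$ would satisfy $dK\subseteq D$, which is impossible since $dK=K$; hence the whole intersection is $0$. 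This contradicts finite cogeneration, so $D$ is a field. (Alternatively: a nonzero finitely cogenerated module has nonzero essential socle, and a simple submodule of a domain is forced to be $D$ itself, so $D$ is simple, i.e. a field.) For the converse, a field has only the ideals $(0)$ and $D$, so any family of ideals with zero intersection must include $(0)$; thus a field is finitely cogenerated, hence strongly totally $\sigma$--finitely cogenerated for every $\sigma$ (and $\sigma$--torsionfree unless $\sigma$ is the improper theory). The only delicate point is the reduction in the second paragraph: one must be attentive to the definitions to see that the domain hypothesis turns ``totally $\sigma$--torsion ideal'' into ``zero ideal''; everything afterwards is the classical argument, the one mildly non-obvious ingredient being the choice of the witnessing family $\{(D:y)\}$, since the naive chain $(a)\supseteq(a^2)\supseteq\cdots$ need not have zero intersection in a general domain.
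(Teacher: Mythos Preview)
Your proof is correct. Both you and the paper begin with the same reduction: since $D$ is a $\sigma$--torsionfree domain, the only totally $\sigma$--torsion ideal is $(0)$, so ``strongly totally $\sigma$--finitely cogenerated'' collapses to ``finitely cogenerated''. After that the arguments diverge. The paper observes that finite cogeneration together with the primality of $(0)$ forces $(0)$ to be \emph{strongly prime} in the sense of Gottlieb (any family of ideals with zero intersection already contains the zero ideal), so the intersection of all nonzero ideals is a minimum nonzero principal ideal $aD$, and then $aD\subseteq axD$ for every $x\neq 0$ yields invertibility of $x$. You instead exhibit directly a family $\{(D:y)\mid y\in K\setminus D\}$ with the finite intersection property but zero total intersection, contradicting finite cogeneration. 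Your route is more hands-on and avoids the detour through strongly prime ideals; the paper's route is more structural and yields as a by-product the extra information that $(0)$ is strongly prime. Your parenthetical alternative (a nonzero finitely cogenerated module has nonzero socle, and a simple ideal in a domain must be $D$ itself) is essentially the paper's minimum-ideal argument in different dress.
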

\begin{proof}
We claim $0\subseteq{D}$ is strongly prime,  see \cite{Gottlieb:2017}. Indeed, let $\{\ideal{a}_i\mid\;i\in{I}\}$ be a family of ideals such that $\cap_{i\in{I}}\ideal{a}_i=0$. By the hypothesis, there exist $J\subseteq{I}$, finite, and $\ideal{h}\in\mathcal{L}(\sigma)$ such that $(\cap_{j\in{J}}\ideal{a}_j)\ideal{h}=0$. Since $A$ is $\sigma$--torsionfree, then $\cap_{j\in{J}}\ideal{a}_j=0$. Otherwise, since $0\subseteq{D}$ is prime, there exists $j\in{J}$ such that $\ideal{a}_j=0$.
\par
In consequence, the intersection of all non--zero ideals is non--zero, and $D$ contains a minimum non-zero ideal, say $\ideal{a}=aD$. For any $0\neq{x}\in{D}$ we have $ax\neq0$, and $aD\subseteq{axD}$, this means that there exists $y\in{D}$ such that $a=axy$, and $1=xy$, hence $x$ is invertible.
\end{proof}

We put the condition that $D$ is $\sigma$--torsionfree only to avoid the trivial case in which $\sigma=0$.

\section{Study through prime ideals}

Let $\ideal{p}\subseteq{A}$ be a prime ideal, we consider $\sAp$, the hereditary torsion theory cogenerated by $A/\ideal{p}$, or equivalently, the hereditary torsion theory generated by the multiplicatively subset $A\setminus\ideal{p}$. For every torsion theory $\sigma$ we associate the following sets of ideals:
\begin{enumerate}[(1)]\sepa
\item
$\mathcal{L}(\sigma)$, the Gabriel filter of $\sigma$.
\item
$\mathcal{Z}(\sigma)=\mathcal{L}(\sigma)\cap\Spec(A)$. In particular, if $\ideal{p}\subseteq\ideal{q}$ are prime ideals and $\ideal{p}\in\mathcal{Z}(\sigma)$, then $\ideal{q}\in\mathcal{Z}(\sigma)$.
\item
$C(A,\sigma)=\{\ideal{a}\mid\;A/\ideal{a}\in\mathcal{F}_\sigma\}$.
\item
$\mathcal{K}(\sigma)=C(A,\sigma)\cap\Spec(A)$, it is the complement of $\mathcal{Z}(\sigma)$ in $\Spec(A)$. In particular, if $\ideal{p}\subseteq\ideal{q}$ are prime ideals and $\ideal{q}\in\mathcal{Z}(\sigma)$, then $\ideal{p}\in\mathcal{Z}(\sigma)$.
\item
$\mathcal{C}(\sigma)=\Max\mathcal{K}(\sigma)$.
\end{enumerate}
If $\sigma$ is of finite type, then $\sigma=\wedge\{\sAp\mid\;\ideal{p}\in\mathcal{K}(\sigma)\}$. Otherwise, $\sigma=\wedge\{\sAp\mid\;\ideal{p}\in\mathcal{C}(\sigma)\}$ whenever $A$ is $\sigma$--noetherian, because $\sigma_{A\setminus\ideal{q}}\leq\sAp$ whenever $\ideal{p}\subseteq\ideal{q}$, for any prime ideals $\ideal{p},\ideal{q}$.

\begin{proposition}\label{pr:20191026}
Let $\ideal{p}\subseteq{A}$ be a prime ideal. If $A$ is totally $\sAp$--artinian, then $\ideal{p}\subseteq{A}$ is a minimal prime ideal.
\end{proposition}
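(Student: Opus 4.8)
The plan is a proof by contradiction. First recall (Example~\eqref{ex:20200103}) that $\mathcal{L}(\sAp)=\{\ideal{h}\subseteq A\mid\ideal{h}\not\subseteq\ideal{p}\}$ and that this Gabriel filter has a basis of principal ideals $sA$ with $s\in A\setminus\ideal{p}$. Hence being totally $\sAp$--artinian unravels to: for every descending chain $\ideal{a}_1\supseteq\ideal{a}_2\supseteq\cdots$ of ideals of $A$ there are an index $m$ and an element $s\in A\setminus\ideal{p}$ with $\ideal{a}_m s\subseteq\ideal{a}_n$ for all $n\geq m$ (this is exactly the $S$--artinian condition for $S=A\setminus\ideal{p}$). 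Now I would assume $\ideal{p}$ is \emph{not} minimal; then there is a prime ideal $\ideal{q}$ properly contained in $\ideal{p}$ (take any minimal prime contained in $\ideal{p}$), and I fix an element $a\in\ideal{p}\setminus\ideal{q}$.

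Next I would feed the chain of powers $(a)\supseteq(a^2)\supseteq\cdots$ into the hypothesis. This yields an $m$ and an $s\in A\setminus\ideal{p}$ with $(a^m)s\subseteq(a^{m+1})$, so $a^m s=a^{m+1}x$ for some $x\in A$, that is, $a^m(s-ax)=0$, hence $s-ax\in\Ann(a^m)$.

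The decisive step is that the smaller prime $\ideal{q}$ absorbs this annihilator: since $a\notin\ideal{q}$ and $\ideal{q}$ is prime, $a^m\notin\ideal{q}$, and from $a^m(s-ax)=0\in\ideal{q}$ one gets $\Ann(a^m)\subseteq\ideal{q}$, so in particular $s-ax\in\ideal{q}\subseteq\ideal{p}$. As $a\in\ideal{p}$ forces $ax\in\ideal{p}$, this gives $s=(s-ax)+ax\in\ideal{p}$, contradicting $s\notin\ideal{p}$; therefore no prime is properly contained in $\ideal{p}$ and $\ideal{p}$ is minimal. I expect the only subtle point to be that $a$ need not be a regular element, so the earlier Lemma on regular elements does not apply directly — the role of $\ideal{q}$ is precisely to take over for regularity by killing $\Ann(a^m)$. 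An equivalent alternative route would be to quotient by a minimal prime $\ideal{q}$ properly contained in $\ideal{p}$: by the behaviour under quotients established in Section~2, the integral domain $D=A/\ideal{q}$ is totally $p(\sAp)$--artinian, and one checks that $p(\sAp)=\sigma_{D\setminus(\ideal{p}/\ideal{q})}$ with $\ideal{p}/\ideal{q}$ a nonzero prime of $D$, contradicting the earlier conclusion that an integral domain is never totally $\sigma$--artinian for the torsion theory associated with a nonzero prime.
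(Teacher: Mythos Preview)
Your proof is correct. Your principal argument---choosing $a\in\ideal{p}\setminus\ideal{q}$, applying the descending-chain hypothesis to the powers $(a^n)$, and using the primality of $\ideal{q}$ to force $\Ann(a^m)\subseteq\ideal{q}$---is a direct elementwise computation that needs nothing beyond the definition of totally $\sAp$--artinian. The paper, by contrast, proceeds exactly along what you sketch as your ``alternative route'': it passes to the integral domain $D=A/\ideal{q}$, uses the scalar-extension results to see that $D$ is totally $p(\sAp)$--artinian, and then invokes the earlier corollary that for a totally $\sigma$--artinian domain one must have $\sigma=\sigma_{D\setminus\{0\}}$, which is incompatible with $\ideal{p}/\ideal{q}$ being a nonzero prime. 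Your elementwise approach is more self-contained (it requires neither the domain corollary nor the quotient machinery) and makes explicit how the smaller prime $\ideal{q}$ stands in for regularity of $a$; the paper's approach instead shows how the structural reductions already developed funnel the question down to a case handled earlier.
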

\begin{proof}
Let $\ideal{q}\subsetneqq\ideal{p}$ be prime ideals such that $A$ is totally $\sAp$--artinian.  Taking the quotient by the ideal $\ideal{q}$, $p:A\longrightarrow{A/\ideal{q}}$, we have that $A/\ideal{q}$ is a totally $p(\sAp)$--artinian domain, hence $p(\sAp)$ is the usual hereditary torsion theory in a domain, i.e., $\mathcal{L}(p(\sAp))$ contains only the non--zero ideals of $A/\ideal{q}$. Therefore, $\ideal{q}\notin\mathcal{L}(\sAp)$, which is a contradiction.
\end{proof}

\begin{corollary}\label{co:20191118b}
Let $A$ be a totally $\sigma$--artinian ring, every prime ideal $\ideal{p}\in\mathcal{K}(\sigma)$ is a minimal prime ideal. In consequence, $\mathcal{K}(\sigma)=\mathcal{C}(\sigma)$, i.e., every prime ideal in $\mathcal{K}(\sigma)$ is maximal in $\mathcal{K}(\sigma)$.
\end{corollary}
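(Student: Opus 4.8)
The plan is to deduce the statement from Proposition~\eqref{pr:20191026} by comparing $\sigma$ with the torsion theories $\sAp$. The crucial elementary observation is that, for a prime ideal $\ideal{p}$, membership $\ideal{p}\in\mathcal{K}(\sigma)$ is equivalent to $\ideal{p}\notin\mathcal{L}(\sigma)$, and the latter forces $\sigma\leq\sAp$: since $\mathcal{L}(\sigma)$ is a filter of ideals, no member of $\mathcal{L}(\sigma)$ can be contained in $\ideal{p}$ (otherwise $\ideal{p}$ itself would belong to $\mathcal{L}(\sigma)$), so every $\ideal{a}\in\mathcal{L}(\sigma)$ meets $A\setminus\ideal{p}$; that is, $\mathcal{L}(\sigma)\subseteq\mathcal{L}(\sAp)$.

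First I would record this inclusion $\sigma\leq\sAp$ for $\ideal{p}\in\mathcal{K}(\sigma)$. Then, because $A$ is totally $\sigma$--artinian, the comparison lemma for hereditary torsion theories proved above (if $\sigma_1\leq\sigma_2$ and $M$ is totally $\sigma_1$--artinian, then $M$ is totally $\sigma_2$--artinian), applied to the module $M=A$, gives that $A$ is totally $\sAp$--artinian. Proposition~\eqref{pr:20191026} then yields at once that $\ideal{p}$ is a minimal prime ideal of $A$, which is the first assertion.

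For the consequence, recall that $\mathcal{C}(\sigma)=\Max\mathcal{K}(\sigma)\subseteq\mathcal{K}(\sigma)$ by definition, so it suffices to check that every $\ideal{p}\in\mathcal{K}(\sigma)$ is maximal in $\mathcal{K}(\sigma)$. This is immediate from the first part: each such $\ideal{p}$ is a minimal prime of $A$, distinct minimal primes are incomparable, so $\mathcal{K}(\sigma)$ is an antichain for inclusion and every one of its elements is trivially maximal in it; hence $\mathcal{K}(\sigma)=\mathcal{C}(\sigma)$.

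I do not expect a real obstacle; the whole argument rests on the translation ``$\ideal{p}\notin\mathcal{L}(\sigma)\Rightarrow\sigma\leq\sAp$'' together with the comparison lemma and Proposition~\eqref{pr:20191026}. The only point to keep an eye on is to invoke the comparison lemma with $A$ viewed as a module over itself, so that its conclusion is precisely the hypothesis ``$A$ is totally $\sAp$--artinian'' needed by Proposition~\eqref{pr:20191026}.
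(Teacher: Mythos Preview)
Your argument is correct and follows exactly the same line as the paper's proof: from $\ideal{p}\in\mathcal{K}(\sigma)$ obtain $\sigma\leq\sAp$, pass the totally $\sigma$--artinian property along this inequality, and invoke Proposition~\eqref{pr:20191026}. The paper's proof is simply a two-line compression of what you wrote, stating ``$\sigma\leq\sAp$'' without justification and omitting the antichain remark for the final clause.
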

\begin{proof}
Let $\ideal{p}\in\mathcal{K}(\sigma)$, then $\sigma\leq\sAp$, and $A$ is $\sAp$--artinian. Therefore, $\ideal{p}$ is a minimal prime ideal, hence maximal in $\mathcal{K}(\sigma)$.
\end{proof}

For any multiplicatively closed subset $\Sigma\subseteq{A}$, and $\sigma=\sigma_\Sigma$, we obtain Proposition~2.5 in \cite{Sevim/Tekir/Koc:2020}.

Since every totally $\sigma$--artinian ring is $\sigma$--artinian, we establish the next result for $\sigma$--artinian rings.

\begin{corollary}\label{co:20191118}
Let $A$ be a $\sigma$--artinian ring, the following statements hold:
\begin{enumerate}[(1)]\sepa
\item\label{it:co:20191118-1}
$C(A,\sigma)$ is artinian, and the converse also holds.
\item\label{it:co:20191118-4}
$A$ is $\sigma$--noetherian. In particular, $\sigma$ is of finite type.
\item\label{it:co:20191118-2}
$\mathcal{K}(\sigma)$ is a finite set, say $\mathcal{K}(\sigma)=\{\ideal{p}_1,\ldots,\ideal{p}_t\}$.
\item\label{it:co:20191118-3}
There exists a multiplicatively closed subset $\Sigma\subseteq{A}$ such that $\mathcal{L}(\sigma)=\{\ideal{a}\mid\;\ideal{a}\cap\Sigma\neq\varnothing\}$ and $A_\Sigma$ is artinian. The converse also holds.
\end{enumerate}
\end{corollary}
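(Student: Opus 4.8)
The plan is to translate all four assertions into properties of the lattice $C(A,\sigma)$ through the $\sigma$--closure operator $\Clt{\sigma}{A}{-}$, and ultimately into a localization at a multiplicatively closed subset. For (1), if $A$ is $\sigma$--artinian then a decreasing chain $\ideal{b}_1\supseteq\ideal{b}_2\supseteq\cdots$ in $C(A,\sigma)$ consists of $\sigma$--closed ideals, and $\sigma$--stability gives $\ideal{b}_m/\ideal{b}_s\in\mathcal{T}_\sigma\cap\mathcal{F}_\sigma=0$ for $s\ge m$, so the chain is stationary; conversely, given any decreasing chain $\ideal{a}_1\supseteq\ideal{a}_2\supseteq\cdots$ of ideals I pass to $\Clt{\sigma}{A}{\ideal{a}_1}\supseteq\Clt{\sigma}{A}{\ideal{a}_2}\supseteq\cdots$ in $C(A,\sigma)$, which stabilizes from some $m$ if $C(A,\sigma)$ is artinian, and then $\ideal{a}_m/\ideal{a}_s\subseteq\Clt{\sigma}{A}{\ideal{a}_s}/\ideal{a}_s=\sigma(A/\ideal{a}_s)\in\mathcal{T}_\sigma$ for $s\ge m$ shows $\ideal{a}_s\subseteq_\sigma\ideal{a}_m$. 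Part (2) is the relativized Hopkins--Levitzki theorem (the ``Hopkins' Theorem'' already used above, see~\cite{GOLAN:1986}), and then $\sigma$ is of finite type since for $\ideal{h}\in\mathcal{L}(\sigma)$ one picks a finitely generated $\ideal{h}'\subseteq\ideal{h}$ with $\ideal{h}/\ideal{h}'\in\mathcal{T}_\sigma$, so that the extension $0\to\ideal{h}/\ideal{h}'\to A/\ideal{h}'\to A/\ideal{h}\to 0$ forces $A/\ideal{h}'\in\mathcal{T}_\sigma$, i.e.\ $\ideal{h}'\in\mathcal{L}(\sigma)$.

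The crux is that $\mathcal{C}(\sigma)=\Max\mathcal{K}(\sigma)$ is finite. If it were infinite, I would choose pairwise distinct --- hence pairwise incomparable --- prime ideals $\ideal{p}_1,\ideal{p}_2,\dots\in\mathcal{C}(\sigma)$ and set $\ideal{a}_n=\ideal{p}_1\cap\cdots\cap\ideal{p}_n$. Prime avoidance gives $\ideal{a}_n\not\subseteq\ideal{p}_{n+1}$, so $\ideal{a}_1\supsetneq\ideal{a}_2\supsetneq\cdots$ is strictly decreasing, and $\ideal{a}_n/\ideal{a}_{n+1}\cong(\ideal{a}_n+\ideal{p}_{n+1})/\ideal{p}_{n+1}$ is a nonzero submodule of $A/\ideal{p}_{n+1}$, hence nonzero and $\sigma$--torsionfree because $\ideal{p}_{n+1}\in\mathcal{K}(\sigma)$. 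But $\sigma$--artinianness of $A$ forces $\ideal{a}_m/\ideal{a}_{m+1}\in\mathcal{T}_\sigma$ for some $m$, contradicting $\mathcal{T}_\sigma\cap\mathcal{F}_\sigma=0$; hence $\mathcal{C}(\sigma)=\{\ideal{p}_1,\dots,\ideal{p}_t\}$ is finite.

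From here (4) and (3) follow quickly. Since $A$ is $\sigma$--noetherian by (2), $\sigma=\wedge\{\sAp\mid\ideal{p}\in\mathcal{C}(\sigma)\}$, so $\mathcal{L}(\sigma)=\bigcap_{i=1}^{t}\mathcal{L}(\sigma_{A\setminus\ideal{p}_i})=\{\ideal{a}\mid\ideal{a}\not\subseteq\ideal{p}_i\text{ for all }i\}$, which by prime avoidance equals $\{\ideal{a}\mid\ideal{a}\not\subseteq\bigcup_i\ideal{p}_i\}=\{\ideal{a}\mid\ideal{a}\cap\Sigma\ne\varnothing\}$ for the multiplicatively closed set $\Sigma=A\setminus(\ideal{p}_1\cup\cdots\cup\ideal{p}_t)$; thus $\sigma=\sigma_\Sigma$. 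The $\sigma$--closed ideals of $A$ are exactly the $\Sigma$--saturated ones, so $C(A,\sigma)$ is isomorphic to the ideal lattice of $A_\Sigma$, which is artinian by (1); hence $A_\Sigma$ is an artinian ring, which is (4). Its prime ideals correspond to the primes of $A$ disjoint from $\Sigma$, that is, to the primes not lying in $\mathcal{L}(\sigma)$, i.e.\ to $\mathcal{K}(\sigma)$; an artinian ring has only finitely many primes, so $\mathcal{K}(\sigma)$ is finite, which is (3). For the converse in (4): if $\mathcal{L}(\sigma)=\{\ideal{a}\mid\ideal{a}\cap\Sigma\ne\varnothing\}$ and $A_\Sigma$ is artinian, then $\sigma=\sigma_\Sigma$, $C(A,\sigma)$ is the artinian ideal lattice of $A_\Sigma$, and (1) gives that $A$ is $\sigma$--artinian; the converse in (1) was established in the first paragraph.

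The step I expect to be the main obstacle is the finiteness of $\mathcal{C}(\sigma)$: one must manufacture a strictly decreasing chain whose successive quotients are forced to be nonzero and $\sigma$--torsionfree, so that $\sigma$--artinianness collides with the orthogonality $\mathcal{T}_\sigma\cap\mathcal{F}_\sigma=0$. Once that and the relative Hopkins theorem are in hand, the remainder is routine: assembling the closure--operator dictionary and localizing at the complement of a finite union of prime ideals.
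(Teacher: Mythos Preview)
Your argument is correct and shares the paper's backbone: (1) is the definition, (2) is relative Hopkins--Levitzki, and the heart is the finiteness of the relevant set of primes. The organization differs, though. For (3) the paper works directly in $\mathcal{K}(\sigma)$: it takes a countable family $\{\ideal{p}_n\}\subseteq\mathcal{K}(\sigma)$, forms the chain of \emph{products} $\ideal{p}_1\supseteq\ideal{p}_1\ideal{p}_2\supseteq\cdots$, and from $\sigma$--stability deduces $\ideal{p}_1\cdots\ideal{p}_m\subseteq\ideal{p}_{m+1}$ by an elementwise argument ($x\ideal{h}\subseteq\ideal{p}_{m+1}$ with $\ideal{h}\nsubseteq\ideal{p}_{m+1}$ forces $x\in\ideal{p}_{m+1}$). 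You instead restrict to $\mathcal{C}(\sigma)$, which guarantees pairwise incomparability at the outset, use \emph{intersections}, and read off the contradiction from the successive quotient $\ideal{a}_n/\ideal{a}_{n+1}\hookrightarrow A/\ideal{p}_{n+1}$ being nonzero $\sigma$--torsionfree; this is a cleaner packaging of the same mechanism. You then deduce (3) from (4) via the artinian localization, and for (4) you give a self-contained construction of $\Sigma=A\setminus(\ideal{p}_1\cup\cdots\cup\ideal{p}_t)$ through prime avoidance and the lattice isomorphism $C(A,\sigma)\cong\{\text{ideals of }A_\Sigma\}$, whereas the paper simply cites \cite{ALBU/NASTASESCU:1984}. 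One small terminological slip: the step ``prime avoidance gives $\ideal{a}_n\not\subseteq\ideal{p}_{n+1}$'' is not prime avoidance but just primality of $\ideal{p}_{n+1}$ (if $\ideal{p}_1\cap\cdots\cap\ideal{p}_n\subseteq\ideal{p}_{n+1}$ then some $\ideal{p}_i\subseteq\ideal{p}_{n+1}$); prime avoidance is used correctly later when you pass from $\ideal{a}\not\subseteq\ideal{p}_i$ for all $i$ to $\ideal{a}\cap\Sigma\neq\varnothing$.
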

\begin{proof}
\eqref{it:co:20191118-1}. %
It is just the definition.
\par
\eqref{it:co:20191118-4}. %
It is Hopkins' Theorem. See \cite{GOLAN:1986}.
\par
\eqref{it:co:20191118-2}. %
If $\mathcal{K}(\sigma)$ is not finite, there exists a numerable family of prime ideals $\{\ideal{p}_n\mid\;n\in\mathbb{N}\}\subseteq\mathcal{K}(\sigma)$. Then we may build a decreasing chain of ideals $\ideal{p}_1\supseteq\ideal{p}_1\ideal{p}_2\supseteq\ideal{p}_1\ideal{p}_2\ideal{p}_3\supseteq\cdots$. By the hypothesis this chain stabilizes, and there exists an index $m$ such that $\Clt{\sigma}{A}{\ideal{p}_1\cdots\ideal{p}_m}=\Clt{\sigma}{A}{\ideal{p}_1\cdots\ideal{p}_m\ideal{p}_{m+1}}=\cdots$. Therefore, for any $x\in\ideal{p}_1\cdots\ideal{p}_m$ there exists $\ideal{h}\in\mathcal{L}(\sigma)$ such that $x\ideal{h}\subseteq\ideal{p}_1\cdots\ideal{p}_m\ideal{p}_{m+1}\subseteq\ideal{p}_{m+1}$. Since $\ideal{h}\nsubseteq\ideal{p}_{m+1}$, we have $x\in\ideal{p}_{m+1}$. In consequence, $\ideal{p}_1\cdots\ideal{p}_m\subseteq\ideal{p}_{m+1}$, and there exists an index $i$ such that $\ideal{p}_i\subseteq\ideal{p}_{m+1}$, which is a contradiction.
\par
\eqref{it:co:20191118-3}. %
See \cite[Corollary 7.24]{ALBU/NASTASESCU:1984}.
\end{proof}

The following result appears as Theorem~2.2 in \cite{Sevim/Tekir/Koc:2020}.

\begin{theorem}
Let $A$ be a ring, the following statements are equivalent:
\begin{enumerate}[(a)]\sepa
\item
$A$ is artinian.
\item
$A$ is totally $\sAp$--artinian, for every $\ideal{p}\in\Spec{A}$.
\item
$A$ is totally $\sAm$--artinian, for every $\ideal{m}\in\Max(A)$.
\end{enumerate}
\end{theorem}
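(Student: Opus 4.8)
The plan is to prove the cycle (a) $\Rightarrow$ (b) $\Rightarrow$ (c) $\Rightarrow$ (a); the first two implications are essentially free and the whole content of the statement sits in (c) $\Rightarrow$ (a). For (a) $\Rightarrow$ (b): if $A$ is artinian then it is totally $\tau$--artinian for \emph{every} hereditary torsion theory $\tau$ (the Lemma of Section~1 together with the remark following it), hence in particular totally $\sAp$--artinian for each $\ideal{p}\in\Spec(A)$. For (b) $\Rightarrow$ (c) there is nothing to do, since $\Max(A)\subseteq\Spec(A)$.

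For (c) $\Rightarrow$ (a) I would fix an arbitrary ideal $\ideal{a}\subseteq{A}$ and show that the quotient $A/\ideal{a}$ is finitely cogenerated. Fix a maximal ideal $\ideal{m}\subseteq{A}$. By (c), $A$ is totally $\sAm$--artinian, so Theorem~\eqref{th:20200104}, in the form \eqref{it:th:20200104-1} $\Rightarrow$ \eqref{it:th:20200104-2} with $M=A$, tells us that \emph{every} quotient of $A$ — in particular $A/\ideal{a}$ — is strongly totally $\sAm$--finitely cogenerated. As this holds for \emph{every} $\ideal{m}\in\Max(A)$ at once, Theorem~3.4 of~\cite{Sevim/Tekir/Koc:2020} — the characterization of finitely cogenerated modules recalled just above — yields that $A/\ideal{a}$ is finitely cogenerated. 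Since $\ideal{a}$ was arbitrary, every quotient of $A$, viewed as a module over itself, is finitely cogenerated.

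To close the argument I would invoke Theorem~\eqref{th:20200104} once more, now for the trivial hereditary torsion theory $\sigma=0$, for which $\mathcal{L}(0)=\{A\}$: thus ``totally $0$--torsion'' means ``zero'', ``totally $0$--finitely cogenerated'' means ``finitely cogenerated'', and ``totally $0$--artinian'' means ``artinian''. Hence condition \eqref{it:th:20200104-3} of that theorem holds for $M=A$ with $\sigma=0$, and we conclude that $A$ is totally $0$--artinian, i.e.\ artinian.

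I do not expect a genuinely hard step once the earlier results are available; the only point needing care is the matching of interfaces — that the ``strongly totally $\sigma$--finitely cogenerated'' property produced by Theorem~\eqref{th:20200104} is exactly the hypothesis consumed by Theorem~3.4, and that the maximal ideals $\ideal{m}$ with $\ideal{a}\nsubseteq\ideal{m}$ cause no trouble, since (c) is a statement about $A$ itself and therefore transfers to $A/\ideal{a}$ for every such $\ideal{m}$ with no case distinction. It is worth stressing why the argument cannot be shortened to ``$A_{\ideal{m}}$ is artinian for every $\ideal{m}$, hence $A$ is artinian'': that implication is false in general (for instance $A=\prod_{\mathbb{N}}\mathbb{Z}_2$ has all its localizations at maximal ideals equal to fields, yet is not artinian), so one genuinely needs the global finite-cogeneration input supplied by Theorem~3.4.
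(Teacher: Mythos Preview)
Your argument is correct but follows a different route from the paper's own proof. For (c) $\Rightarrow$ (a) the paper works directly with an arbitrary descending chain $\ideal{a}_1\supseteq\ideal{a}_2\supseteq\cdots$: for each maximal ideal $\ideal{m}$ the hypothesis supplies an index $m_\ideal{m}$ and an element $s_\ideal{m}\in{A\setminus\ideal{m}}$ with $\ideal{a}_{m_\ideal{m}}s_\ideal{m}\subseteq\ideal{a}_s$ for all $s\geq m_\ideal{m}$; the family $\{s_\ideal{m}\}$ generates the unit ideal, so finitely many of them already do, and taking the maximum of the corresponding indices gives a single index at which the chain stabilizes. Your proof instead factors through the two characterization theorems already established in Section~\ref{se:20190102}: Theorem~\eqref{th:20200104} turns ``totally $\sAm$--artinian'' into ``every quotient is strongly totally $\sAm$--finitely cogenerated'', the subsequent theorem (Theorem~3.4 of \cite{Sevim/Tekir/Koc:2020}) converts this into ordinary finite cogeneration of every quotient, and the $\sigma=0$ case of Theorem~\eqref{th:20200104} closes the loop. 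The covering trick ``finitely many $s_\ideal{m}$ generate $A$'' that the paper runs inline is precisely the engine inside the proof of the finite-cogeneration theorem you invoke, so your argument is really the same computation packaged at a higher level of abstraction; what you gain is economy (no computation is repeated), what the paper's version gains is self-containment (no forward references to the characterization results are needed).
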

\begin{proof}
(a) $\Rightarrow$ (b) $\Rightarrow$ (c). It is evident. %
\par
(c) $\Rightarrow$ (a). %
Let $\ideal{a}_1\supseteq\ideal{a}_2\supseteq\cdots$ be a decreasing chain of ideals. For every maximal ideal $\ideal{m}$ there exist $m_\ideal{m}$ and $s_\ideal{m}\in{A\setminus\ideal{m}}$ such that $\ideal{a}_{m_\ideal{m}}\subseteq\ideal{a}_{s}$ for every $s\geq{m_\ideal{m}}$. Let $G=\{s_\ideal{m}\mid\;\ideal{m}\in\Max(A)\}$. If $(G)\neq{A}$, there exists a maximal ideal $\ideal{m}$ such that $(G)\subseteq\ideal{m}$, which is a contradiction. Thus we have $(G)=A$, and there exist $s_1,\ldots,s_t\in{G}$ such that $\sum_{i=1}^ts_iA=A$. Let $s_i=s_{\ideal{m}_i}$, and $m_i=m_{\ideal{m}_i}$ for every $i=1,\ldots,t$. If $m=\max\{m_1,\ldots,m_t\}$ we have $\ideal{a}_m\subseteq\ideal{a}_{m_i}$, then
\[
\ideal{a}_ms_i\subseteq\ideal{a}_{m_i}s_i\subseteq\ideal{a}_s,
\]
for every $s\geq{m}$. In consequence, $\ideal{a}_m=\ideal{a}_m\sum_{i=1}^ts_iA\subseteq\ideal{a}_s$ for every $s\geq{m}$, and the chain stabilizes.
\end{proof}

We may study this result in order to characterize totally $\sigma$--artinian rings.

\begin{theorem}\label{th:20200103}
Let $A$ be a ring and $\sigma$ be a finite type hereditary torsion theory. The following statements are equivalent:
\begin{enumerate}[(a)]\sepa
\item
$A$ is totally $\sigma$--artinian.
\item
$\mathcal{K}(\sigma)=\mathcal{C}(\sigma)$ is finite and $A$ is totally $\sAp$--artinian for every prime ideal $\ideal{p}\in\mathcal{C}(\sigma)$.
\end{enumerate}
\end{theorem}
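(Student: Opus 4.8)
The plan is to prove the two implications separately, using only the structural results already established in Sections~3 and~4.

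For $(a)\Rightarrow(b)$, recall first that a totally $\sigma$--artinian ring is in particular $\sigma$--artinian, so Corollary~\eqref{co:20191118} applies and gives that $\mathcal{K}(\sigma)$ is finite, while Corollary~\eqref{co:20191118b} yields $\mathcal{K}(\sigma)=\mathcal{C}(\sigma)$. It then remains to check that $A$ is totally $\sAp$--artinian for each $\ideal{p}\in\mathcal{C}(\sigma)=\mathcal{K}(\sigma)$. For such a $\ideal{p}$ we have $A/\ideal{p}\in\mathcal{F}_\sigma$, so no ideal of $\mathcal{L}(\sigma)$ is contained in $\ideal{p}$ (otherwise $A/\ideal{p}$ would be a nonzero module lying in $\mathcal{T}_\sigma\cap\mathcal{F}_\sigma$); equivalently $\mathcal{L}(\sigma)\subseteq\mathcal{L}(\sAp)$, i.e. $\sigma\leq\sAp$. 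The comparison lemma for $\sigma_1\leq\sigma_2$ then upgrades ``$A$ is totally $\sigma$--artinian'' to ``$A$ is totally $\sAp$--artinian'', which is exactly $(b)$.

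For $(b)\Rightarrow(a)$, write $\mathcal{K}(\sigma)=\mathcal{C}(\sigma)=\{\ideal{p}_1,\ldots,\ideal{p}_t\}$. Since $\sigma$ is of finite type, the decomposition recorded just before Proposition~\eqref{pr:20191026} gives $\sigma=\bigwedge_{i=1}^{t}\sigma_{A\setminus\ideal{p}_i}$, whence $\mathcal{L}(\sigma)=\bigcap_{i=1}^{t}\mathcal{L}(\sigma_{A\setminus\ideal{p}_i})$, and by Example~\eqref{ex:20200103} each $\mathcal{L}(\sigma_{A\setminus\ideal{p}_i})=\{\ideal{a}\mid\;\ideal{a}\nsubseteq\ideal{p}_i\}$. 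Now take a decreasing chain $\ideal{a}_1\supseteq\ideal{a}_2\supseteq\cdots$ of ideals of $A$. For each $i$, total $\sigma_{A\setminus\ideal{p}_i}$--artinianness provides an index $m_i$ and $\ideal{h}_i\in\mathcal{L}(\sigma_{A\setminus\ideal{p}_i})$ with $\ideal{a}_{m_i}\ideal{h}_i\subseteq\ideal{a}_s$ for all $s\geq m_i$; pick $s_i\in\ideal{h}_i\setminus\ideal{p}_i$. Put $m=\max\{m_1,\ldots,m_t\}$ and $\ideal{h}=s_1A+\cdots+s_tA$. Then for every $s\geq m$ one has $\ideal{a}_ms_i\subseteq\ideal{a}_{m_i}\ideal{h}_i\subseteq\ideal{a}_s$ (using $\ideal{a}_m\subseteq\ideal{a}_{m_i}$ and $s_i\in\ideal{h}_i$), so $\ideal{a}_m\ideal{h}\subseteq\ideal{a}_s$; and since $s_i\in\ideal{h}\setminus\ideal{p}_i$ for every $i$, the ideal $\ideal{h}$ avoids each $\ideal{p}_i$, i.e. $\ideal{h}\in\mathcal{L}(\sigma)$. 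Thus the chain is totally $\sigma$--stable, and $A$ is totally $\sigma$--artinian.

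The decisive step, and the only place where something nontrivial happens, is the finite decomposition $\sigma=\bigwedge_{i=1}^{t}\sigma_{A\setminus\ideal{p}_i}$ over $\mathcal{K}(\sigma)=\mathcal{C}(\sigma)$: it is precisely what replaces the possibly unwieldy $\sigma$ by finitely many principal torsion theories, so that $m=\max_i m_i$ exists and $\ideal{h}$ is finitely generated; after that, the argument is a direct analogue of the classical proof that artinianness is detected at the maximal ideals (the theorem immediately above, with $\mathcal{C}(\sigma)$ and membership in $\mathcal{L}(\sigma)$ now in the roles of $\Max(A)$ and the unit ideal). Two small points deserve care: the identity $\mathcal{L}(\bigwedge_i\sigma_{A\setminus\ideal{p}_i})=\bigcap_i\mathcal{L}(\sigma_{A\setminus\ideal{p}_i})$, valid for \emph{finite} meets since the torsion class of a finite meet is the intersection of the torsion classes; and the replacement of a $\sigma_{A\setminus\ideal{p}_i}$--dense ideal $\ideal{h}_i$ by a single scalar $s_i\notin\ideal{p}_i$, which is legitimate because $\sigma_{A\setminus\ideal{p}_i}$ is a principal hereditary torsion theory.
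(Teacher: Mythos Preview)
Your argument is correct and follows the same outline as the paper's: $(a)\Rightarrow(b)$ via Corollaries~\eqref{co:20191118b} and~\eqref{co:20191118} together with the comparison $\sigma\leq\sAp$, and $(b)\Rightarrow(a)$ by stabilizing an arbitrary descending chain using the finitely many $\sAp$--witnesses and assembling them into a single $\ideal{h}\in\mathcal{L}(\sigma)$.

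The one substantive difference lies in how $\ideal{h}$ is built in $(b)\Rightarrow(a)$. The paper sets $\ideal{h}=\prod_{\ideal{p}\in\mathcal{C}(\sigma)}\ideal{h}_\ideal{p}$, whereas you take the \emph{sum} $\ideal{h}=\sum_i s_iA$. Your choice is actually the safer one: since $s_i\in\ideal{h}\setminus\ideal{p}_i$ for each $i$, the membership $\ideal{h}\in\bigcap_i\mathcal{L}(\sigma_{A\setminus\ideal{p}_i})=\mathcal{L}(\sigma)$ is immediate. The paper's product, taken literally, need not lie in $\mathcal{L}(\sigma)$, because nothing rules out $\ideal{h}_{\ideal{p}_i}\subseteq\ideal{p}_j$ for some $i\neq j$ (for instance in $k[x,y]/(xy)$ with $\ideal{p}_1=(x)$, $\ideal{p}_2=(y)$ one may have $\ideal{h}_{\ideal{p}_1}=(y)$, $\ideal{h}_{\ideal{p}_2}=(x)$, and then $\ideal{h}_{\ideal{p}_1}\ideal{h}_{\ideal{p}_2}=0$). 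Replacing the product by the sum $\sum_\ideal{p}\ideal{h}_\ideal{p}$ repairs this at once, and your version already has that repair built in. One harmless aside: the identity $\mathcal{L}\bigl(\bigwedge_i\tau_i\bigr)=\bigcap_i\mathcal{L}(\tau_i)$ holds for arbitrary meets of hereditary torsion theories, not only finite ones; finiteness of $\mathcal{C}(\sigma)$ is needed here only so that $m=\max_i m_i$ exists.
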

\begin{proof}
  (a) $\Rightarrow$ (b). %
  See Corollaries \eqref{co:20191118b} and \eqref{co:20191118}.
  \par
  (b) $\Rightarrow$ (a). %
  Let $\ideal{a}_1\supseteq\ideal{a}_2\supseteq\cdots$ be a chain of ideals. For every $\ideal{p}\in\mathcal{C}(\sigma)$ there exists $m_\ideal{p}\in\mathbb{N}$ and $\ideal{h}_\ideal{p}\in\mathcal{L}(\sAp)$ such that $\ideal{a}_{m_\ideal{p}}\ideal{h}_\ideal{p}\subseteq\ideal{a}_s$ for every $s\geq{m_\ideal{p}}$. If we take $m=\max\{m_\ideal{p}\mid\;\ideal{p}\in\mathcal{C}(\sigma)\}$, and $\ideal{h}=\prod\{\ideal{h}_\ideal{p}\mid\;\ideal{p}\in\mathcal{C}(\sigma)\}$, then $\ideal{m}\ideal{h}\subseteq\ideal{a}_s$ for every $s\geq{m}$, which shows that $A$ is totally $\sigma$--artinian.
\end{proof}

\section{Simple and maximal modules}

In order to show that every totally $\sigma$--artinian ring is totally $\sigma$--noetherian we need to study simple modules and maximal ideal relative to the hereditary torsion theory $\sigma$. We shall use the preceding studies of minimal and maximal elements as appears in sections\eqref{se:20190102} and \eqref{se:20200102}, respectively.

\subsection*{The example of $\sigma$--torsion. The classical theory}

Let $M$ be an $A$--module. We have that $M$ is \textbf{$\sigma$--artinian} if, and only if, the family $C(M,\sigma)$ of $\sigma$--closed submodules $\mathcal{M}$ satisfies the decreasing chain condition, or equivalently the minimal condition, which are also equivalent to the condition that for every decreasing chain $N_1\supseteq{N_2}\supseteq\cdots$ of submodules there exists $m\in\mathbb{N}$ such that $\Clt{\sigma}{M}{N_m}=\Clt{\sigma}{M}{N_s}$, for every $s\geq{m}$.

A submodule $N\subseteq{M}$ is \textbf{$\sigma$--minimal} if $\Clt{\sigma}{M}{N}\subseteq{M}$ is a minimal element in $C(M,\sigma)\setminus\{\sigma{M}\}$, or equivalently if $C(N,\sigma)=\{\sigma{N},N\}$. This means that $N$ is not $\sigma$--torsion, and for every submodule $L\subseteq{N}$ we have either $L$ is $\sigma$--torsion or $L$ is not $\sigma$--torsion, and in this case $L\subseteq_\sigma{N}$, it is $\sigma$--dense. Observe that if $M$ is not $\sigma$--torsion, a submodule $N\subseteq{M}$ is $\sigma$--minimal if, and only if, $N$ is a minimal in the family $\mathcal{M}=\{N\subseteq{M}\mid\;N\textrm{ is not $\sigma$--torsion}\}$.

An $A$--module $M$ is \textbf{$\sigma$--simple} if $C(M,\sigma)=\{\sigma{M},M\}$. If, in addition, $M$ is $\sigma$--torsionfree, we name $M$ a \textbf{$\sigma$--cocritical} $A$--module.

We may dualize $\sigma$--artinian to obtain $\sigma$--noetherian modules. In the case of $\sigma$--maximal submodules, we have that $N\subseteq{M}$ is \textbf{$\sigma$--maximal} if $M/N$ is not $\sigma$--torsion, and for every submodule $N\subseteq{L}\subseteq{M}$ such that $M/L$ is not $\sigma$--torsion we have $N\subseteq_\sigma{L}$, which is equivalent to say that $M/N$ is a $\sigma$--simple $A$--module. Observe that if $M$ is not $\sigma$--torsion, a submodule $N\subseteq{M}$ is $\sigma$--maximal if, and only if, $N$ is a maximal the family $\mathcal{M}=\{N\subseteq{M}\mid\;M/N\textrm{ is not $\sigma$--torsion}\}$.

A submodule $N\subseteq{M}$ is called \textbf{$\sigma$--critical} if it is $\sigma$--maximal and $M/N$ is $\sigma$--torsionfree.

\subsection*{The example of totally $\sigma$--torsion. Simple modules}

When we study modules and the totally $\sigma$--torsion we need to change the paradigm. Thus, let $M$ be a totally $\sigma$--artinian $A$--module, and consider the family of submodules
\[
\mathcal{M}=\{N\subseteq{M}\mid\;N\textrm{ is not totally $\sigma$--torsion}\}.
\]
$\mathcal{M}$ is not empty whenever $M$ is not totally $\sigma$--torsion. If $M$ is totally $\sigma$--artinian, there exists a $\sigma$--minimal element in $\mathcal{M}$, say $N$, that satisfies:
\begin{enumerate}[(1)]\sepa
\item
$N$ is not totally $\sigma$--torsion,
\item
There exists $\ideal{h}\in\mathcal{L}(\sigma)$ such that for every $H\subseteq{N}$, which is not totally $\sigma$--torsion, we have $N\ideal{h}\subseteq{H}$.
\end{enumerate}

In general, for any $A$--module $M$, a submodule $N$ satisfying (1) and (2) is called a \textbf{totally $\sigma$--minimal submodule} of $M$. An $A$--module $M$ is called \textbf{totally $\sigma$--simple} whenever $M$ is a $\sigma$--minimal element of $\mathcal{M}$, i.e.,
\begin{enumerate}[(1)]\sepa
\item
$M$ is not totally $\sigma$--torsion and
\item
there exists $\ideal{h}\in\mathcal{L}(\sigma)$ such that for every not totally $\sigma$--torsion submodule $H\subseteq{M}$ we have $M\ideal{h}\subseteq{H}$.
\end{enumerate}

Let $M$ be a totally $\sigma$--simple $A$--module with \textbf{companion ideal} $\ideal{h}\in\mathcal{L}(\sigma)$, i.e., $\ideal{h}$ satisfies that for every $H\subseteq{M}$ which is non totally $\sigma$--torsion we have $M\ideal{h}\subseteq{H}$.

Observe that we have:

\begin{proposition}\label{pr:20191130}
Let $M$ be a totally $\sigma$--simple $A$--module with companion ideal $\ideal{h}\in\mathcal{L}(\sigma)$, the following statements hold:
\begin{enumerate}[(1)]\sepa
\item
$M\ideal{h}$ is not totally $\sigma$--torsion, hence $M\ideal{h}\subseteq{M}$ is the minimum of all not totally $\sigma$--torsion submodules of $M$. In particular, every not totally $\sigma$--torsion of $M$ is totally $\sigma$--simple.
\item
$M\ideal{h}$ is also totally $\sigma$--simple, and every proper submodule if totally $\sigma$--torsion.
\item
For any ideal $\ideal{h}'\in\mathcal{L}(\sigma)$ we always have $M\ideal{h}\subseteq{M\ideal{h}'}$. In particular, if $\ideal{h}'\subseteq\ideal{h}$ then $M\ideal{h}=M\ideal{h}'$.
\item
If $\ideal{h}'\in\mathcal{L}(\sigma)$ is another ideal companion to $M$, then $M\ideal{h}'=M\ideal{h}$.
\item\label{it:pr:20191130}
Let $M'$ be a totally $\sigma$--simple $A$--module and $f:M\longrightarrow{M'}$ be a surjective map with kernel $K$, then $K$ is totally $\sigma$--torsion.
\end{enumerate}
\end{proposition}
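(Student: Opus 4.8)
The plan is to prove each of the five assertions in turn, working always with the defining property of the companion ideal $\ideal{h}$: for every submodule $H\subseteq{M}$ that is not totally $\sigma$--torsion we have $M\ideal{h}\subseteq{H}$. The main conceptual point, which makes everything else mechanical, is item (1): one must show that $M\ideal{h}$ is itself \emph{not} totally $\sigma$--torsion, so that it qualifies as one of the ``test'' submodules $H$ and is therefore the minimum of all of them. I would argue this by contradiction: if $M\ideal{h}$ were totally $\sigma$--torsion, there would be $\ideal{h}'\in\mathcal{L}(\sigma)$ with $M\ideal{h}\ideal{h}'=0$; but $\ideal{h}\ideal{h}'\in\mathcal{L}(\sigma)$ (a Gabriel filter is closed under products, since $\ideal{h}\ideal{h}'\in\mathcal{L}(\sigma)$ follows from $\ideal{h},\ideal{h}'\in\mathcal{L}(\sigma)$ by the filter axioms), so $M\ideal{h}\ideal{h}'=0$ would say $M$ is killed by an ideal of $\mathcal{L}(\sigma)$, contradicting that $M$ is not totally $\sigma$--torsion. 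Hence $M\ideal{h}$ is not totally $\sigma$--torsion, so $M\ideal{h}\subseteq M\ideal{h}$ (trivially) together with minimality of $M\ideal{h}$ among not-totally-$\sigma$--torsion submodules gives the ``minimum'' claim; and any not-totally-$\sigma$--torsion submodule $N$ contains $M\ideal{h}$, so $N\supseteq M\ideal{h}$ is not totally $\sigma$--torsion and $N\ideal{h}\supseteq M\ideal{h}\ideal{h}$... more directly, $N$ has companion ideal $\ideal{h}$ as well since for any not-totally-$\sigma$--torsion $H\subseteq N\subseteq M$ we get $N\ideal{h}\subseteq M\ideal{h}\subseteq H$, wait one needs $N\ideal{h}\subseteq M\ideal{h}$ which holds as $N\subseteq M$; so $N$ is totally $\sigma$--simple.

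For item (2), I would combine (1) with the observation that the not-totally-$\sigma$--torsion submodules of $M\ideal{h}$ are exactly the not-totally-$\sigma$--torsion submodules of $M$ contained in $M\ideal{h}$, and by (1) the only such submodule is $M\ideal{h}$ itself; hence any proper submodule of $M\ideal{h}$ is totally $\sigma$--torsion, and $M\ideal{h}$ is totally $\sigma$--simple with companion ideal $\ideal{h}$ (indeed with companion ideal $A$, but $\ideal{h}$ works). Item (3): for arbitrary $\ideal{h}'\in\mathcal{L}(\sigma)$, the submodule $M\ideal{h}'$ is not totally $\sigma$--torsion — otherwise $M\ideal{h}'\ideal{h}''=0$ for some $\ideal{h}''$, again giving $M$ totally $\sigma$--torsion via $\ideal{h}'\ideal{h}''\in\mathcal{L}(\sigma)$ — so by the defining property of $\ideal{h}$ we get $M\ideal{h}\subseteq M\ideal{h}'$; and if moreover $\ideal{h}'\subseteq\ideal{h}$ then $M\ideal{h}'\subseteq M\ideal{h}$ as well, forcing equality. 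Item (4) is immediate from (3) applied symmetrically: $M\ideal{h}\subseteq M\ideal{h}'$ and $M\ideal{h}'\subseteq M\ideal{h}$.

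For item (5), given a surjection $f:M\to M'$ onto a totally $\sigma$--simple module with companion ideal, say, $\ideal{h}'$, I would argue that $K=\ker f$ must be totally $\sigma$--torsion: if not, then $K$ is a not-totally-$\sigma$--torsion submodule of $M$, so $M\ideal{h}\subseteq K$ by the defining property of $\ideal{h}$, whence $M'\ideal{h}=f(M)\ideal{h}=f(M\ideal{h})=0$; but then $M'$ is killed by $\ideal{h}\in\mathcal{L}(\sigma)$, i.e.\ $M'$ is totally $\sigma$--torsion, contradicting that $M'$ is totally $\sigma$--simple. Hence $K$ is totally $\sigma$--torsion. The only step requiring any care is the repeated use of the fact that $\mathcal{L}(\sigma)$ is closed under finite products, which is standard for Gabriel filters (it follows from the filter property together with the Gabriel axiom, and is already implicitly used throughout the paper, e.g.\ in Example~\eqref{ex:20200103}\eqref{it:ex:20200103-2}); everything else is bookkeeping with the companion-ideal inequality $M\ideal{h}\subseteq H$.
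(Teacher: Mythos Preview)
Your proof is correct and follows the same line as the paper: the paper dismisses (1)--(4) as ``straightforward'' and proves (5) by exactly your contradiction argument ($K$ not totally $\sigma$--torsion $\Rightarrow M\ideal{h}\subseteq K\Rightarrow M'\ideal{h}=0$). Your expanded treatment of (1)--(4), in particular the key observation that $M\ideal{h}$ itself is not totally $\sigma$--torsion because $\ideal{h}\ideal{h}'\in\mathcal{L}(\sigma)$, is precisely the intended ``straightforward'' content.
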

\begin{proof}
(1) to (4) are straightforward.
\par
\eqref{it:pr:20191130}. %
If $K\subseteq{M}$ is not totally $\sigma$--torsion, and $\ideal{h}\in\mathcal{L}(\sigma)$ is the companion ideal of $M$, then $M\ideal{h}\subseteq{K}$, and we have $M'\ideal{h}=0$, which is a contradiction.
\end{proof}

If $M$ is totally $\sigma$--simple with companion ideal $\ideal{h}$, we call $M\ideal{h}$ the \textbf{core submodule} of $M$.

\begin{proposition}
If $M\subseteq{M'}$ satisfies that there exists $\ideal{h}_0\in\mathcal{L}(\sigma)$ such that $M'\ideal{h}_0\subseteq{M}$, then $M$ is totally $\sigma$--simple if, and only if, $M'$ is. In addition, the core of $M$ and $M'$ are equal.
\end{proposition}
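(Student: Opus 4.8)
The plan is to argue entirely through companion ideals, using the structural facts collected in Proposition~\eqref{pr:20191130}. The starting observation is that $M'/M$ is totally $\sigma$--torsion, since it is annihilated by $\ideal{h}_0$; hence, because $\mathcal{L}(\sigma)$ is closed under products of ideals, $M$ is totally $\sigma$--torsion if and only if $M'$ is. One direction is immediate from $M\subseteq M'$; for the other, if $M\ideal{k}=0$ with $\ideal{k}\in\mathcal{L}(\sigma)$, then $M'\ideal{h}_0\ideal{k}=0$ and $\ideal{h}_0\ideal{k}\in\mathcal{L}(\sigma)$. The same trick, applied to the inclusion $H'\ideal{h}_0\subseteq H'\cap M$ valid for every submodule $H'\subseteq M'$, shows that $H'\cap M$ fails to be totally $\sigma$--torsion whenever $H'$ does. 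In particular, condition~(1) in the definition of totally $\sigma$--simple holds for $M$ exactly when it holds for $M'$.

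Next, for ``$M'$ totally $\sigma$--simple $\Rightarrow$ $M$ totally $\sigma$--simple'', note first that $M$ is not totally $\sigma$--torsion by the opening observation; then I would take a companion ideal $\ideal{g}$ of $M'$ and check that it is already a companion ideal of $M$: if $H\subseteq M$ is not totally $\sigma$--torsion, then $H$ is a not totally $\sigma$--torsion submodule of $M'$, so $M'\ideal{g}\subseteq H$, whence $M\ideal{g}\subseteq M'\ideal{g}\subseteq H$. For the converse, take a companion ideal $\ideal{h}$ of $M$ and show that $\ideal{h}_0\ideal{h}$ is a companion ideal of $M'$: given a not totally $\sigma$--torsion submodule $H'\subseteq M'$, the opening observation gives that $H'\cap M$ is not totally $\sigma$--torsion, hence $M\ideal{h}\subseteq H'\cap M$, and then
\[
M'(\ideal{h}_0\ideal{h})=(M'\ideal{h}_0)\ideal{h}\subseteq M\ideal{h}\subseteq H'\cap M\subseteq H'.
\]
Since $\ideal{h}_0\ideal{h}\in\mathcal{L}(\sigma)$, this is precisely the companion property for $M'$; together with the opening observation this gives that $M'$ is totally $\sigma$--simple.

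It remains to identify the cores, so assume both modules are totally $\sigma$--simple and fix a companion ideal $\ideal{h}$ of $M$. By Proposition~\eqref{pr:20191130}(3) (its ``in particular'' clause, as $\ideal{h}_0\ideal{h}\subseteq\ideal{h}$) the core of $M$ equals $M\ideal{h}=M(\ideal{h}_0\ideal{h})$, while by the previous step $\ideal{h}_0\ideal{h}$ is a companion ideal of $M'$, so the core of $M'$ is $M'(\ideal{h}_0\ideal{h})$. Now $M(\ideal{h}_0\ideal{h})\subseteq M'(\ideal{h}_0\ideal{h})$ is trivial; conversely the core of $M$ is a not totally $\sigma$--torsion submodule of $M'$, and by Proposition~\eqref{pr:20191130}(1) the core of $M'$ is the smallest such submodule, so $M'(\ideal{h}_0\ideal{h})\subseteq M(\ideal{h}_0\ideal{h})$. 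Hence the two cores coincide.

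The step I expect to cost the most care is the transfer of companion ideals from $M$ to $M'$: a companion ideal of $M'$ works verbatim for $M$, but a companion ideal $\ideal{h}$ of $M$ must be shrunk to $\ideal{h}_0\ideal{h}$ so that $M'(\ideal{h}_0\ideal{h})$ lands inside $M$ and can be compared with submodules of $M$, and one has to know that this shrinking leaves the core unchanged --- which is exactly what Proposition~\eqref{pr:20191130}(3)--(4) provides. Beyond this, the argument only uses the elementary closure of $\mathcal{L}(\sigma)$ under products of ideals.
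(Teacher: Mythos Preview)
Your proof is correct and follows essentially the same approach as the paper: in both directions you transfer companion ideals between $M$ and $M'$, and you identify the cores via the minimality property of the core. The only cosmetic differences are that the paper uses $H'\ideal{h}_0$ rather than $H'\cap M$ as the auxiliary non--totally--$\sigma$--torsion submodule of $M$ (yours contains the paper's, so either works), and for the cores the paper computes directly $M'\ideal{h}=M'\ideal{h}\ideal{h}\subseteq M\ideal{h}\subseteq M'\ideal{h}$ after choosing a common companion $\ideal{h}\subseteq\ideal{h}_0$, whereas you invoke Proposition~\eqref{pr:20191130}(1),(3) to reach the same conclusion.
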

\begin{proof}
If $M$ is totally $\sigma$--simple, $\ideal{h}\in\mathcal{L}(\sigma)$ is a companion ideal, and $H\subseteq{M'}$ is not totally $\sigma$--torsion, then $H\ideal{h}_0\subseteq{M}$ is not totally $\sigma$--torsion, hence $M\ideal{h}\subseteq{H\ideal{h}_0}\subseteq{H}$, and $M'\ideal{h}_0\ideal{h}\subseteq{H}$. Otherwise, if $M'$ is totally $\sigma$--simple, $\ideal{h}'\in\mathcal{L}(\sigma)$ is a companion ideal, and $H\subseteq{M}$ is not totally $\sigma$--torsion, since $H\subseteq{M'}$, we have $M\ideal{h}'\subseteq{M'\ideal{h}'}\subseteq{H}$, and $M$ is totally $\sigma$--simple.
\par
If $M\subseteq{M'}$ are totally $\sigma$--simple modules we may assume $\ideal{h}$ is the same companion ideal to $M$ and $M'$, then we have $M\ideal{h}=M'\ideal{h}$. Indeed, $M'\ideal{h}\subseteq{M}$, and $M'\ideal{h}=M'\ideal{h}\ideal{h}\subseteq{M\ideal{h}}\subseteq{M'\ideal{h}}$. The core of $M/L$ is a quotient of the core of $M$.
\end{proof}

\begin{proposition}
Let $L\subseteq{M}$ be a totally $\sigma$--torsion submodule, then $M$ is totally $\sigma$--simple if, and only if, $M/L$ is. In this case, the core of $M/L$ is a quotient of the core of $M$.
\end{proposition}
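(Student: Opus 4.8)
The plan is to fix at the outset an ideal $\ideal{h}_0\in\mathcal{L}(\sigma)$ with $L\ideal{h}_0=0$, available since $L$ is totally $\sigma$--torsion, and then to keep careful track of how the two submodule operations $H\mapsto H/L$ (for $L\subseteq H\subseteq M$) and $H\mapsto(H+L)/L$ (for arbitrary $H\subseteq M$) interact with the property of being, or failing to be, totally $\sigma$--torsion. The preliminary observation is that $M$ is totally $\sigma$--torsion if and only if $M/L$ is: one implication is trivial, and if $M\ideal{h}\subseteq L$ for some $\ideal{h}\in\mathcal{L}(\sigma)$, then $M\ideal{h}\ideal{h}_0\subseteq L\ideal{h}_0=0$, with $\ideal{h}\ideal{h}_0\in\mathcal{L}(\sigma)$ since a Gabriel filter is closed under products of ideals. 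Hence the first clause in the definition of ``totally $\sigma$--simple'' holds for $M$ exactly when it holds for $M/L$, and it remains only to compare companion ideals on the two sides.

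To prove that $M$ totally $\sigma$--simple implies $M/L$ totally $\sigma$--simple, I would show that any companion ideal $\ideal{h}$ of $M$ is already a companion ideal of $M/L$. Indeed, if $H/L\subseteq M/L$ (with $L\subseteq H\subseteq M$) is not totally $\sigma$--torsion, then $H\subseteq M$ is not totally $\sigma$--torsion either, because any ideal in $\mathcal{L}(\sigma)$ annihilating $H$ would annihilate $H/L$; so $M\ideal{h}\subseteq H$ by the companion property of $M$, and passing to the quotient gives $(M/L)\ideal{h}\subseteq H/L$. For the converse, given a companion ideal $\ideal{h}'$ of $M/L$ I claim that $\ideal{h}'\ideal{h}_0\in\mathcal{L}(\sigma)$ is a companion ideal of $M$: if $H\subseteq M$ is not totally $\sigma$--torsion, then $(H+L)/L$ is not totally $\sigma$--torsion (otherwise $(H+L)\ideal{k}\subseteq L$ for some $\ideal{k}\in\mathcal{L}(\sigma)$, whence $H\ideal{k}\ideal{h}_0\subseteq L\ideal{h}_0=0$, contradicting the choice of $H$), so $(M/L)\ideal{h}'\subseteq(H+L)/L$, i.e.\ $M\ideal{h}'\subseteq H+L$; multiplying by $\ideal{h}_0$ then gives $M\ideal{h}'\ideal{h}_0\subseteq(H+L)\ideal{h}_0=H\ideal{h}_0\subseteq H$, as needed.

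The claim about cores will then drop out of the first implication: taking $\ideal{h}$ to be a companion ideal of $M$, which by the above is simultaneously a companion ideal of $M/L$, the core of $M/L$ is $(M/L)\ideal{h}=(M\ideal{h}+L)/L$, which is precisely the image of the core submodule $M\ideal{h}$ of $M$ under the canonical projection $M\to M/L$, with kernel $M\ideal{h}\cap L\subseteq L$ (so totally $\sigma$--torsion); thus the core of $M/L$ is a quotient of the core of $M$. Well-definedness of ``the core'' on each side is furnished by Proposition~\eqref{pr:20191130}. The one point requiring attention is the asymmetry between the two directions --- the companion ideal transfers verbatim from $M$ to $M/L$, but must be contracted by $\ideal{h}_0$ to pass back from $M/L$ to $M$, since pulling a submodule of $M/L$ back through the projection controls $H+L$ rather than $H$ itself --- together with the repeated, routine appeal to product-closure of $\mathcal{L}(\sigma)$; I do not anticipate a genuine obstacle.
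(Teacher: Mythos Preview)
Your proposal is correct and follows essentially the same approach as the paper: both directions of the equivalence are argued exactly as in the paper (companion ideal $\ideal{h}$ of $M$ passes verbatim to $M/L$, while a companion ideal $\ideal{h}'$ of $M/L$ must be multiplied by an annihilator $\ideal{h}_0$ of $L$ to serve for $M$). Your treatment of the core is slightly more streamlined than the paper's---you observe that since $\ideal{h}$ is simultaneously a companion ideal for $M$ and $M/L$, the core of $M/L$ is by definition $(M/L)\ideal{h}=(M\ideal{h}+L)/L$, hence automatically a quotient of $M\ideal{h}$---whereas the paper instead verifies directly that the image $M\ideal{h}/(M\ideal{h}\cap L)$ is core totally $\sigma$--simple; but this is a cosmetic difference, not a distinct strategy.
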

\begin{proof}
If $M$ is totally $\sigma$--simple with companion ideal $\ideal{h}\in\mathcal{L}(\sigma)$, and $H/L\subseteq{M/L}$ is a not totally $\sigma$--torsion submodule, then $H\subseteq{M}$ is not totally $\sigma$--torsion, hence $M\ideal{h}\subseteq{H}$, and $(M/L)\ideal{h}=(M\ideal{h}+L)/L\subseteq{H/L}$. Otherwise, if $M/L$ is totally $\sigma$--simple with companion ideal $\ideal{h}\in\mathcal{L}(\sigma)$, and $H\subseteq{M}$ is not totally $\sigma$--torsion, then $(H+L)/L$ is not totally $\sigma$--torsion, hence $(M/L)\ideal{h}\subseteq(H+L)/L$, and $M\ideal{h}+L\subseteq{H+L}$. Since $L$ is totally $\sigma$--torsion, there exists $\ideal{h}'\in\mathcal{L}(\sigma)$ such that $L\ideal{h}'=0$. Therefore, $M\ideal{h}\ideal{h}'\subseteq{H\ideal{h}'}\subseteq{H}$, and $M$ is totally $\sigma$--simple.
\par
If $M\ideal{h}$ is the core of $M$, its image is $M\ideal{h}/(M\ideal{h}\cap{L})$, which is totally $\sigma$--simple and every proper submodule is totally $\sigma$--torsion. Indeed, if $H/(M\ideal{h}\cap{L})\subsetneqq{M\ideal{h}/(M\ideal{h}\cap{L})}$, then $H\subsetneqq{M}$, so it is totally $\sigma$--torsion, hence $H/(M\ideal{h}\cap{L})$ is.
\end{proof}

An $A$--module $M$ is \textbf{core totally $\sigma$--simple} whenever $M$ is the core of a totally $\sigma$--simple module, i.e., whenever $M$ is not totally $\sigma$--torsion and every proper submodule is totally $\sigma$--torsion.

Observe that if $M$ is core totally $\sigma$--simple, then $M\ideal{h}=M$ for every $\ideal{h}\in\mathcal{L}(\sigma)$. Indeed, if $\ideal{h}_0\in\mathcal{L}(\sigma)$ is the companion ideal of $M$, then $M\ideal{h}_0=M$, and for every $\ideal{h}\in\mathcal{L}(\sigma)$ we have: $M\ideal{h}=M\ideal{h}\ideal{h}_0=M$.

If $M$ is a core totally $\sigma$--simple $A$--module, we have two different cases:
\begin{enumerate}[(1)]\label{pg:20191204}\sepa
\item
$\sigma{M}\neq{M}$. In this case, $\sigma{M}$ is totally $\sigma$--torsion and $M/\sigma{M}$ is $\sigma$--torsionfree and core totally $\sigma$--simple, hence it is simple. Indeed, every proper submodule is totally $\sigma$--torsion, hence zero.
\item
$\sigma{M}=M$. In this case, since every proper quotient is $\sigma$--torsion, it follows that $M$ has no simple quotients. In particular, $M$ is not finitely generated.
\end{enumerate}

Let $M$ be a core totally $\sigma$--simple $A$--module, we have the following two possibilities:
\begin{enumerate}[(1)]\sepa
\item
$M$ is not cyclic. Since every proper submodule of $M$ is totally $\sigma$--torsion, we have $M$ is $\sigma$--torsion. Hence $M$ is not finitely generated because it is not totally $\sigma$--torsion.
\item
$M$ is cyclic. Since it is not totally $\sigma$--torsion, then $\sigma{M}\neq{M}$, and $M$ has simple $\sigma$--torsionfree quotients. Otherwise, every simple quotient of $M$ is $\sigma$--torsionfree. Consequences of this fact are: $\sigma(M)$ is totally $\sigma$--torsion, and $\sigma{M}$ is contained in the intersection of all maximal submodules of $M$.
\end{enumerate}

\begin{lemma}
Let $A$ be a ring and $\ideal{a}\subseteq{A}$ be a proper ideal, then $M=A/\ideal{a}$ is core totally $\sigma$--simple if, and only if, it satisfies:
\begin{enumerate}[(1)]\sepa
\item
$\ideal{a}\notin\mathcal{L}(\sigma)$,
\item
$\ideal{a}+\ideal{h}=A$, for every $\ideal{h}\in\mathcal{L}(\sigma)$, and
\item
$(\ideal{a}:\ideal{b})\in\mathcal{L}(\sigma)$, for every proper ideal $\ideal{b}\supseteq\ideal{a}$.
\end{enumerate}
\end{lemma}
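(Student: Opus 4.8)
The plan is to set up once and for all the dictionary between submodules of $M=A/\ideal{a}$ and ideals $\ideal{b}$ with $\ideal{a}\subseteq\ideal{b}\subseteq{A}$, and then read each clause in the definition of ``core totally $\sigma$--simple'' off from it. The one computation to do first is that for such $\ideal{b}$ and any ideal $\ideal{c}\subseteq{A}$ one has $(\ideal{b}/\ideal{a})\ideal{c}=(\ideal{b}\ideal{c}+\ideal{a})/\ideal{a}$, so $(\ideal{b}/\ideal{a})\ideal{c}=0$ if and only if $\ideal{b}\ideal{c}\subseteq\ideal{a}$, i.e.\ $\ideal{c}\subseteq(\ideal{a}:\ideal{b})$. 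Because $\mathcal{L}(\sigma)$ is a filter (in particular upward closed), this yields: $\ideal{b}/\ideal{a}$ is totally $\sigma$--torsion $\iff$ $(\ideal{a}:\ideal{b})\in\mathcal{L}(\sigma)$. Taking $\ideal{b}=A$ and using $(\ideal{a}:A)=\ideal{a}$, the module $M$ itself is totally $\sigma$--torsion $\iff$ $\ideal{a}\in\mathcal{L}(\sigma)$.

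With this dictionary the forward implication is essentially a translation. Assume $M$ is core totally $\sigma$--simple. ``$M$ is not totally $\sigma$--torsion'' is exactly (1). ``Every proper submodule of $M$ is totally $\sigma$--torsion'' means $(\ideal{a}:\ideal{b})\in\mathcal{L}(\sigma)$ for every $\ideal{b}$ with $\ideal{a}\subseteq\ideal{b}\subsetneqq{A}$, which is (3) (the case $\ideal{b}=\ideal{a}$ being vacuous since $(\ideal{a}:\ideal{a})=A$). For (2) I would invoke the remark made just before the lemma, that a core totally $\sigma$--simple module satisfies $M\ideal{h}=M$ for every $\ideal{h}\in\mathcal{L}(\sigma)$; for $M=A/\ideal{a}$ this says $(\ideal{a}+\ideal{h})/\ideal{a}=A/\ideal{a}$, i.e.\ $\ideal{a}+\ideal{h}=A$. (Self-contained variant: if $\ideal{a}+\ideal{h}\subsetneqq{A}$ for some $\ideal{h}\in\mathcal{L}(\sigma)$, then $M\ideal{h}=(\ideal{a}+\ideal{h})/\ideal{a}$ is a proper submodule, hence totally $\sigma$--torsion, so $M\ideal{h}\ideal{h}'=0$ for some $\ideal{h}'\in\mathcal{L}(\sigma)$; since Gabriel filters are closed under products, $\ideal{h}\ideal{h}'\in\mathcal{L}(\sigma)$ and $M$ is totally $\sigma$--torsion, contradicting (1).)

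For the converse, assume (1), (2), (3). By (1) and the dictionary $M$ is not totally $\sigma$--torsion, and by (3) every proper submodule $\ideal{b}/\ideal{a}$ (so $\ideal{a}\subseteq\ideal{b}\subsetneqq{A}$) has $(\ideal{a}:\ideal{b})\in\mathcal{L}(\sigma)$, hence is totally $\sigma$--torsion; therefore $M$ is core totally $\sigma$--simple. I expect no genuine obstacle: the only points needing care are the use of upward closure of $\mathcal{L}(\sigma)$ to pass from ``some $\ideal{h}\in\mathcal{L}(\sigma)$ lies inside $(\ideal{a}:\ideal{b})$'' to ``$(\ideal{a}:\ideal{b})\in\mathcal{L}(\sigma)$'', the identity $(\ideal{a}:A)=\ideal{a}$, and, if the self-contained proof of (2) is used, closure of $\mathcal{L}(\sigma)$ under products of ideals. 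It is worth noting that (2) is in fact a consequence of (1) and (3): if $\ideal{h}\in\mathcal{L}(\sigma)$ had $\ideal{a}+\ideal{h}\neq{A}$, applying (3) to $\ideal{b}=\ideal{a}+\ideal{h}$ gives $(\ideal{a}:\ideal{b})\in\mathcal{L}(\sigma)$, whence $\ideal{h}(\ideal{a}:\ideal{b})\subseteq\ideal{b}(\ideal{a}:\ideal{b})\subseteq\ideal{a}$ with $\ideal{h}(\ideal{a}:\ideal{b})\in\mathcal{L}(\sigma)$, forcing $\ideal{a}\in\mathcal{L}(\sigma)$ against (1); so (2) may be regarded as a convenient reformulation rather than an independent hypothesis.
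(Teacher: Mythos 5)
Your proof is correct and takes essentially the same approach as the paper: translating the defining conditions of ``core totally $\sigma$--simple'' into conditions on ideals containing $\ideal{a}$ via the correspondence $\ideal{b}/\ideal{a}\leftrightarrow\ideal{b}$, using upward closure of $\mathcal{L}(\sigma)$ for the torsion criterion. You also supply the (immediate) converse, which the paper leaves implicit, and your observation that condition (2) is a consequence of (1) and (3) is accurate and a worthwhile remark.
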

\begin{proof}
If $A/\ideal{a}$ is core totally $\sigma$--simple, then it is not totally $\sigma$--torsion, i.e., $\ideal{a}\notin\mathcal{L}(\sigma)$. For any $\ideal{h}\in\mathcal{L}(\sigma)$ we have $(A/\ideal{a})\ideal{h}=A/\ideal{a}$, i.e., $\ideal{a}+\ideal{h}=A$. Since every proper submodule of $A/\ideal{a}$ is totally $\sigma$--torsion, then for every $\ideal{b}\supseteq\ideal{a}$ we have $(\ideal{a}:\ideal{b})\in\mathcal{L}(\sigma)$.
\end{proof}

In particular, for any maximal ideal $\ideal{m}$ the simple $A$--module $A/\ideal{m}$ is totally $\sigma$--simple if, and only if, $\ideal{m}\notin\mathcal{L}(\sigma)$.

\begin{lemma}
The class $\mathcal{M}$ is $\sigma$--lower closed.
\end{lemma}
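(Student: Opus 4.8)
The plan is to unwind the definition of $\sigma$--lower closed and reduce the statement to a one-line computation inside the Gabriel filter $\mathcal{L}(\sigma)$. Recall that here $\mathcal{M}=\{N\subseteq{M}\mid\;N\textrm{ is not totally }\sigma\textrm{--torsion}\}$. Concretely, I must show: if $H\subseteq{M}$ is a submodule for which there exist $N\in\mathcal{M}$ and $\ideal{h}\in\mathcal{L}(\sigma)$ with $N\ideal{h}\subseteq{H}$, then $H\in\mathcal{M}$, that is, $H$ is not totally $\sigma$--torsion. The three reformulations listed in the definition of $\sigma$--lower closed ($N\ideal{h}\subseteq{H}$, $N\subseteq(H:\ideal{h})$, $(H:N)\in\mathcal{L}(\sigma)$) are equivalent, so it suffices to work with the first.

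I would argue by contradiction. Suppose $H$ were totally $\sigma$--torsion; then there is $\ideal{h}'\in\mathcal{L}(\sigma)$ with $H\ideal{h}'=0$. Combining this with $N\ideal{h}\subseteq{H}$ gives $N\ideal{h}\ideal{h}'\subseteq{H}\ideal{h}'=0$, i.e.\ $N(\ideal{h}\ideal{h}')=0$.

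The one point that needs care is that $\ideal{h}\ideal{h}'$ again belongs to $\mathcal{L}(\sigma)$; one cannot simply replace the product $\ideal{h}\ideal{h}'$ by $\ideal{h}\cap\ideal{h}'$, since the product is the smaller ideal, and it is the product that annihilates $N$. This is the standard fact that a Gabriel filter is closed under products of ideals, and it follows directly from the axiom for $\mathcal{L}(\sigma)$ recalled in the introduction: for every $a\in\ideal{h}$ we have $(\ideal{h}\ideal{h}':a)\supseteq\ideal{h}'\in\mathcal{L}(\sigma)$, hence $(\ideal{h}\ideal{h}':a)\in\mathcal{L}(\sigma)$ because $\mathcal{L}(\sigma)$ is a filter; since $\ideal{h}\in\mathcal{L}(\sigma)$, the defining property of a Gabriel filter then yields $\ideal{h}\ideal{h}'\in\mathcal{L}(\sigma)$.

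Therefore $N$ is annihilated by an ideal of $\mathcal{L}(\sigma)$, so $N$ is totally $\sigma$--torsion, contradicting $N\in\mathcal{M}$. This forces $H$ not to be totally $\sigma$--torsion, i.e.\ $H\in\mathcal{M}$, and establishes that $\mathcal{M}$ is $\sigma$--lower closed. There is essentially no real obstacle here: the proof is a short contrapositive argument whose only substantive ingredient is the closure of $\mathcal{L}(\sigma)$ under ideal products.
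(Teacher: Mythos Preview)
Your proof is correct and follows exactly the same contrapositive argument as the paper: assume $H$ is totally $\sigma$--torsion with $H\ideal{h}'=0$, deduce $N\ideal{h}\ideal{h}'=0$, and conclude $N$ is totally $\sigma$--torsion, a contradiction. The only difference is that you spell out why $\ideal{h}\ideal{h}'\in\mathcal{L}(\sigma)$, which the paper leaves implicit.
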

\begin{proof}
Indeed, if $H\subseteq{M}$ and there are $N\in\mathcal{M}$ and $\ideal{h}\in\mathcal{L}(\sigma)$ such that $N\ideal{h}\subseteq{H}$, then $H$ is not totally $\sigma$--torsion. On the contrary there exists $\ideal{h}'\in\mathcal{L}(\sigma)$ such that $H\ideal{h}'=0$, hence $H\ideal{h}\ideal{h}'=0$, and $N$ is totally $\sigma$--torsion.
\end{proof}

If $M$ is totally $\sigma$--artinian, there are minimal elements in $\mathcal{M}$; every minimal element $N$ of $\mathcal{M}$ is a core totally $\sigma$--simple module, i.e., it satisfies:
\begin{enumerate}[(1)]\sepa
\item
$N$ is not totally $\sigma$--torsion, and
\item
Every proper submodule of $N$ is totally $\sigma$--torsion.
\end{enumerate}

\begin{lemma}
Let $M$ be a totally $\sigma$--simple module, for any submodule $N\subseteq{M}$ we have:
\begin{enumerate}[(1)]\sepa
\item
If $N$ is not totally $\sigma$--torsion, then $N$ is totally $\sigma$--simple. In addition, $N$ and $M$ have the same core.
\item
The quotient $M/N$ is either
totally $\sigma$--torsion whenever $N$ is non totally $\sigma$--torsion, or
totally $\sigma$--simple whenever $N$ is totally $\sigma$--torsion.
\end{enumerate}
\end{lemma}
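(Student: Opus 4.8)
The plan is to derive both parts from the structure results already established for totally $\sigma$--simple modules rather than to argue from scratch. Throughout, fix a companion ideal $\ideal{h}\in\mathcal{L}(\sigma)$ of $M$, so that $M\ideal{h}$ is the core of $M$ and $M\ideal{h}\subseteq H$ for every submodule $H\subseteq M$ which is not totally $\sigma$--torsion.

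For part (1), assume $N\subseteq M$ is not totally $\sigma$--torsion. The companion property gives $M\ideal{h}\subseteq N$, so the inclusion $N\subseteq M$ meets the hypothesis of the proposition above saying that, whenever $M'\subseteq M''$ with $M''\ideal{h}_0\subseteq M'$ for some $\ideal{h}_0\in\mathcal{L}(\sigma)$, the module $M'$ is totally $\sigma$--simple if and only if $M''$ is and they have the same core; here $M'=N$, $M''=M$, $\ideal{h}_0=\ideal{h}$. That proposition at once gives that $N$ is totally $\sigma$--simple and that $N$ and $M$ have equal cores. (A self-contained variant: verify directly that $\ideal{h}$ is also a companion ideal of $N$. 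From $M\ideal{h}=M\ideal{h}\ideal{h}$ --- the inclusion $M\ideal{h}\subseteq M\ideal{h}\ideal{h}$ following from part (3) of Proposition~\eqref{pr:20191130} applied to $\ideal{h}^2\in\mathcal{L}(\sigma)$, the reverse inclusion being trivial --- together with $M\ideal{h}\subseteq N$ one obtains $N\ideal{h}=M\ideal{h}$; then any not totally $\sigma$--torsion submodule $H\subseteq N\subseteq M$ contains $M\ideal{h}=N\ideal{h}$, so $N$ is totally $\sigma$--simple with core $N\ideal{h}=M\ideal{h}$.)

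For part (2), split into the two cases. If $N$ is not totally $\sigma$--torsion, then $M\ideal{h}\subseteq N$ forces $(M/N)\ideal{h}=(M\ideal{h}+N)/N=0$, and since $\ideal{h}\in\mathcal{L}(\sigma)$ this exactly says $M/N$ is totally $\sigma$--torsion. If $N$ is totally $\sigma$--torsion, invoke the proposition above stating that for a totally $\sigma$--torsion submodule $L\subseteq M$ the module $M$ is totally $\sigma$--simple if and only if $M/L$ is; taking $L=N$ yields that $M/N$ is totally $\sigma$--simple.

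I do not expect a real obstacle: the lemma is essentially a consolidation of the two preceding propositions (comparison along an inclusion $M'\subseteq M''$ with $M''\ideal{h}_0\subseteq M'$ for some $\ideal{h}_0\in\mathcal{L}(\sigma)$, and quotient by a totally $\sigma$--torsion submodule), so the work is purely organizational. The one slightly delicate point is the idempotency $M\ideal{h}=(M\ideal{h})\ideal{h}$ used in the self-contained variant of part (1) to match the core of $N$ with that of $M$; it rests on $\ideal{h}^2$ again belonging to $\mathcal{L}(\sigma)$ (Gabriel filters are closed under products of ideals) together with part (3) of Proposition~\eqref{pr:20191130}. Routed through the comparison proposition, this subtlety never appears, since that proposition already records the equality of cores.
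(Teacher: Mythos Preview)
Your proof is correct and follows the same underlying idea as the paper: everything is controlled by the companion ideal $\ideal{h}$ and the inclusion $M\ideal{h}\subseteq N$ when $N$ is not totally $\sigma$--torsion. The only organizational difference is that for part~(2) in the case where $N$ is totally $\sigma$--torsion, you invoke the preceding proposition on quotients by totally $\sigma$--torsion submodules, whereas the paper re-runs that argument directly (showing that a non totally $\sigma$--torsion $L/N$ has $L$ non totally $\sigma$--torsion, hence $M\ideal{h}\subseteq L$, hence $(M/N)\ideal{h}\subseteq L/N$); since the proposition's proof is exactly that computation, the two routes coincide in substance.
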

\begin{proof}
  (1) is straightforward.
  \par
  (2). %
  If $N$ is non totally $\sigma$--torsion, there exists $\ideal{h}\in\mathcal{L}(\sigma)$ such that $M\ideal{h}\subseteq{N}$, hence $M/N$ is totally $\sigma$--torsion. Otherwise, if $N$ is totally $\sigma$--torsion, then $M/N$ is non totally $\sigma$--torsion; on the other hand, for any non totally $\sigma$--torsion submodule $L/N\subseteq{M/N}$, since $L\subseteq{M}$ is non totally $\sigma$--torsion, there exists $\ideal{h}\in\mathcal{L}(\sigma)$ such that $M\ideal{h}\subseteq{L}$; hence $(M/N)\ideal{h}\subseteq(L/N)$, and $M/N$ is totally $\sigma$--simple.
\end{proof}

\begin{lemma}
If $N_1$, $N_2$ are totally $\sigma$--simple modules, and $f:N_1\longrightarrow{N_2}$ is a module map, then $f$ is either zero or surjective.
\end{lemma}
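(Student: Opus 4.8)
The plan is to run the usual Schur-type argument, reduced to the two lemmas immediately above. Write $K=\ker f\subseteq N_1$ and $I=\operatorname{im}f\subseteq N_2$, so that $I\cong N_1/K$, and separate into cases according to whether $K$ is totally $\sigma$--torsion. The inputs needed are: the lemma just above, which says that for a totally $\sigma$--simple module $M$ and a submodule $N\subseteq M$, the quotient $M/N$ is totally $\sigma$--torsion when $N$ is not totally $\sigma$--torsion and totally $\sigma$--simple when $N$ is; and Proposition~\ref{pr:20191130}(1), that the core $M\ideal{h}$ (for a companion ideal $\ideal{h}$) is the minimum of the not totally $\sigma$--torsion submodules of $M$.

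If $K$ is not totally $\sigma$--torsion, then applying the preceding lemma to $K\subseteq N_1$ shows that $N_1/K$ is totally $\sigma$--torsion, hence so is the isomorphic copy $I\subseteq N_2$; equivalently, $K$ must contain the core $N_1\ideal{h}$ of $N_1$, so that $I\ideal{h}=f(N_1\ideal{h})=0$. This is the ``zero'' branch: the image of $f$ is totally $\sigma$--torsion, and it is literally $0$ precisely when $N_1$ coincides with its own core, i.e.\ $N_1\ideal{h}=N_1$.

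If instead $K$ is totally $\sigma$--torsion, then the preceding lemma gives that $N_1/K$, hence $I$, is totally $\sigma$--simple, so $I\subseteq N_2$ is a not totally $\sigma$--torsion submodule; applying the lemma once more, now to $I\subseteq N_2$, shows that $N_2/I$ is totally $\sigma$--torsion and that $I$ contains the core of $N_2$. This is the ``surjective'' branch: $f$ has totally $\sigma$--torsion cokernel, and is literally surjective precisely when $N_2$ coincides with its own core.

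I do not expect a genuine obstacle: the proof is merely a case split feeding into the two prior lemmas. The one point worth flagging is that ``zero'' and ``surjective'' must be read here as ``image totally $\sigma$--torsion'' and ``cokernel totally $\sigma$--torsion'', and that these coincide with the literal notions exactly for \emph{core} totally $\sigma$--simple modules (for a totally $\sigma$--simple $M$ with proper core $C$, the inclusion $C\hookrightarrow M$ is a map that is neither literally zero nor literally surjective). It may therefore be cleanest to state the lemma for core totally $\sigma$--simple modules, or to phrase its conclusion modulo totally $\sigma$--torsion.
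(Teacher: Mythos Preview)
The paper gives no proof of this lemma; it is simply stated and then the section ends. Your Schur-type case split on whether $\ker f$ is totally $\sigma$--torsion, feeding into the immediately preceding lemma and Proposition~\ref{pr:20191130}, is the natural argument and is carried out correctly.

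More importantly, you have spotted a genuine issue with the statement itself. Your counterexample is valid: if $M$ is totally $\sigma$--simple with companion ideal $\ideal{h}$ and proper core $C=M\ideal{h}\subsetneq M$, then by Proposition~\ref{pr:20191130}(1) the core $C$ is again totally $\sigma$--simple, and the inclusion $C\hookrightarrow M$ is a module map between totally $\sigma$--simple modules that is neither literally zero nor literally surjective. So the lemma as written only holds in the relativised sense you describe (image totally $\sigma$--torsion, or cokernel totally $\sigma$--torsion), and holds literally precisely when $N_1$ and $N_2$ are \emph{core} totally $\sigma$--simple. Your suggested fix---either restrict to core totally $\sigma$--simple modules or phrase the conclusion modulo totally $\sigma$--torsion---is exactly right.
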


\section{Maximal submodules}\label{se:20200102}

In this section we assume the reader knows about totally $\sigma$--noetherian rings and modules as it was exposed in \cite{Jara:2020a}.

Let $M$ be a totally $\sigma$--noetherian $A$--module, and consider the family of submodules
\[
\mathcal{M}'=\{N\subseteq{M}\mid\;M/N\textrm{ is not totally $\sigma$--torsion}\}.
\]
We have that $\mathcal{M}'$ is nonempty whenever $M$ is not totally $\sigma$--torsion.

\begin{lemma}
Let $M$ be a non totally $\sigma$--torsion module, the class $\mathcal{M}'$ is $\sigma$--upper closed.
\end{lemma}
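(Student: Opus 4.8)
The plan is to unwind the definition of \emph{$\sigma$--upper closed}, which is the exact dual of the notion of $\sigma$--lower closed family introduced in Section~\ref{se:20190102}: a family $\mathcal{N}$ of submodules of $M$ is $\sigma$--upper closed if whenever $H\subseteq{M}$ is a submodule for which there exist $N\in\mathcal{N}$ and $\ideal{h}\in\mathcal{L}(\sigma)$ with $H\ideal{h}\subseteq{N}$ --- equivalently $(N:H)\in\mathcal{L}(\sigma)$ --- we have $H\in\mathcal{N}$. So the task reduces to the following: given such $H$, $N$, $\ideal{h}$ with $N\in\mathcal{M}'$ (that is, $M/N$ is not totally $\sigma$--torsion), show that $M/H$ is not totally $\sigma$--torsion either.

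First I would argue by contradiction. Assume $M/H$ is totally $\sigma$--torsion, so there is $\ideal{h}'\in\mathcal{L}(\sigma)$ with $M\ideal{h}'\subseteq{H}$. Then $M\ideal{h}'\ideal{h}\subseteq{H\ideal{h}}\subseteq{N}$. Since $\mathcal{L}(\sigma)$ is a Gabriel filter it is closed under (finite) products of ideals, so $\ideal{h}'\ideal{h}\in\mathcal{L}(\sigma)$; hence $(M/N)(\ideal{h}'\ideal{h})=0$, i.e.\ $M/N$ is totally $\sigma$--torsion, contradicting $N\in\mathcal{M}'$. Therefore $H\in\mathcal{M}'$, which is exactly the required closure property.

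This is essentially a one-line computation once the correct dual definition is pinned down; the only points that require a moment of care are (i) recording the equivalent reformulations of the defining condition $H\ideal{h}\subseteq{N}$, just as was done for $\sigma$--lower closed families, and (ii) invoking closure of the Gabriel filter under products so that $\ideal{h}'\ideal{h}\in\mathcal{L}(\sigma)$. I do not anticipate any genuine obstacle here; the mild subtlety is merely to keep the asymmetry straight --- for the family $\mathcal{M}$ of submodules that are not totally $\sigma$--torsion one obtains a $\sigma$--lower closed family, whereas for the family $\mathcal{M}'$ of submodules whose quotient is not totally $\sigma$--torsion one obtains a $\sigma$--upper closed family, the two proofs being mirror images of one another.
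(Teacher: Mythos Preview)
Your proof is correct and follows essentially the same argument as the paper: assume $M/H$ is totally $\sigma$--torsion via some $\ideal{h}'\in\mathcal{L}(\sigma)$, multiply to get $M\ideal{h}'\ideal{h}\subseteq N$, and derive the contradiction that $M/N$ is totally $\sigma$--torsion. The only difference is cosmetic --- you spell out explicitly that $\ideal{h}'\ideal{h}\in\mathcal{L}(\sigma)$ by closure under products, which the paper leaves implicit.
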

\begin{proof}
If $H\subseteq{M}$ and there are $N\in\mathcal{M}'$ and $\ideal{h}\in\mathcal{L}(\sigma)$ such that $H\ideal{h}\subseteq{N}$, then $H\in\mathcal{M}'$. We show that $M/H$ is not totally $\sigma$--torsion. On the contrary, there exists $\ideal{h}_1\in\mathcal{L}(\sigma)$ such that $M\ideal{h}_1\subseteq{H}$; hence $M\ideal{h}_1\ideal{h}\subseteq{H\ideal{h}}\subseteq{N}$, and $M/N$ is totally $\sigma$--torsion, which is a contradiction.
\end{proof}

Since $M$ is totally $\sigma$--noetherian, there are maximal elements in $\mathcal{M}$; a maximal element of $\mathcal{M}'$ is called a totally $\sigma$--maximal submodule of $M$. We define a submodule $N$ of $M$ to be a \textbf{core totally $\sigma$--maximal submodule} whenever it satisfies:
\begin{enumerate}[(1)]\sepa
\item
$M/N$ is not totally $\sigma$--torsion.
\item
$N$ is maximal in the $\sigma$--upper closed family
$$
\mathcal{N}'=\{L\subseteq{M}\mid\;N\subseteq{L},\,M/L\textrm{ is not totally $\sigma$--torsion}\}.
$$
\end{enumerate}

In the same way, we can define a \textbf{totally $\sigma$--maximal submodule} of an $A$--module $M$ whenever
\begin{enumerate}[(1)]\sepa
\item
$M/N$ is not totally $\sigma$--torsion, and
\item
$N$ is $\sigma$--maximal in $\mathcal{N}'$, i.e., there exists $\ideal{h}\in\mathcal{L}(\sigma)$ such that for every $H\in\mathcal{N}'$ such that $N\subseteq{H}$ we have $H\ideal{h}\subseteq{N}$.
\end{enumerate}

Even, we can dualize the notion of totally $\sigma$--simple submodule, in defining a submodule $N\subseteq{M}$ to be \textbf{totally $\sigma$--cosimple} if it satisfies:
\begin{enumerate}[(1)]\sepa
\item
$M/N$ is not totally $\sigma$--torsion, and
\item
There exists $h\in\mathcal{L}(\sigma)$ such that for every $N\subseteq{H}\subsetneqq{M}$ satisfying that $H/N$ is not totally $\sigma$--torsion we have $M\ideal{h}\subseteq{H}$.
\end{enumerate}
and defining \textbf{core totally $\sigma$--cosimple} if, in addition, for every $N\subseteq{H}\subsetneqq{M}$ we have that $H/N$ is totally $\sigma$--torsion.

\begin{lemma}\label{le:20191202}
Let $M$ be a core totally $\sigma$--simple $A$--module, then $\Ann(M)\subseteq{A}$ is a core totally $\sigma$--cosimple ideal.
\end{lemma}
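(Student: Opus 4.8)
The plan is to set $\ideal{a}=\Ann(M)$ and verify directly the two substantive conditions for $\ideal{a}$, viewed as a submodule of the $A$--module $A$, to be core totally $\sigma$--cosimple: that $A/\ideal{a}$ is not totally $\sigma$--torsion, and that $\ideal{b}/\ideal{a}$ is totally $\sigma$--torsion for every ideal $\ideal{b}$ with $\ideal{a}\subseteq\ideal{b}\subsetneqq A$. Once the latter is established, clause~(2) in the definition of totally $\sigma$--cosimple holds vacuously (take $\ideal{h}=A$), since then no proper $\ideal{b}\supseteq\ideal{a}$ produces a non totally $\sigma$--torsion quotient $\ideal{b}/\ideal{a}$.

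For the first condition, if $A/\ideal{a}$ were totally $\sigma$--torsion there would be $\ideal{h}\in\mathcal{L}(\sigma)$ with $(A/\ideal{a})\ideal{h}=0$, i.e. $\ideal{h}\subseteq\ideal{a}=\Ann(M)$, whence $M\ideal{h}=0$ and $M$ would be totally $\sigma$--torsion, contradicting that $M$ is core totally $\sigma$--simple. For the second condition, fix a proper ideal $\ideal{b}\supseteq\ideal{a}$ and observe that $\ideal{b}/\ideal{a}$ is totally $\sigma$--torsion exactly when there is $\ideal{h}\in\mathcal{L}(\sigma)$ with $\ideal{b}\ideal{h}\subseteq\Ann(M)$, that is with $\ideal{h}\subseteq\Ann(\ideal{b}M)$; so the second condition is equivalent to the submodule $\ideal{b}M\subseteq M$ being totally $\sigma$--torsion. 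Since $M$ is core totally $\sigma$--simple, every proper submodule of $M$ is totally $\sigma$--torsion, so it suffices to prove $\ideal{b}M\subsetneqq M$.

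The main obstacle is precisely this strict inclusion $\ideal{b}M\subsetneqq M$. When $M$ is cyclic it is immediate: then $M\cong A/\ideal{a}$ and $\ideal{b}M$ corresponds to the proper submodule $\ideal{b}/\ideal{a}$ of $A/\ideal{a}$; more generally, for finitely generated $M$ it follows by a Nakayama--type argument after embedding $\ideal{b}$ into a maximal ideal. The delicate case is a non--finitely generated core totally $\sigma$--simple module, where I would invoke the dichotomy recorded earlier for such $M$ --- either $\sigma M\neq M$, in which case $M/\sigma M$ is simple, or $\sigma M=M$ and $M$ has no simple quotients --- together with the fact that $M\ideal{h}=M$ for every $\ideal{h}\in\mathcal{L}(\sigma)$, to rule out $\ideal{b}M=M$ for $\ideal{b}$ proper. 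Granting $\ideal{b}M\subsetneqq M$, the argument closes at once: $\ideal{b}M$ is then totally $\sigma$--torsion, so $\ideal{b}\ideal{h}\subseteq\Ann(M)$ for some $\ideal{h}\in\mathcal{L}(\sigma)$, hence $\ideal{b}/\Ann(M)$ is totally $\sigma$--torsion; combined with the first condition, this exhibits $\Ann(M)$ as a core totally $\sigma$--cosimple ideal.
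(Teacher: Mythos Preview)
Your reduction to the inclusion $\ideal{b}M\subsetneqq M$ is correct, and you rightly flag it as the crux; but the appeal to ``the dichotomy recorded earlier \ldots\ together with $M\ideal{h}=M$'' does not close the gap, and in fact this step can fail in the non--cyclic case. Take $A=k[X]$ with $\sigma=\sigma_{A\setminus\{0\}}$, and let $M=k[X,X^{-1}]/k[X]$, the Pr\"ufer--type module at $(X)$. Then $M$ is divisible, so $M\ideal{h}=M$ for every nonzero ideal $\ideal{h}$ and $M$ is not totally $\sigma$--torsion; every proper submodule equals $\{m\in M\mid X^nm=0\}$ for some $n$ and is therefore totally $\sigma$--torsion. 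Hence $M$ is core totally $\sigma$--simple. However $\Ann(M)=\bigcap_n(X^n)=0$, and for the proper ideal $\ideal{b}=(X)\supsetneqq\Ann(M)$ one has $\ideal{b}M=MX=M$: your key inclusion fails. Worse, $\ideal{b}/\Ann(M)=(X)\cong A$ is $\sigma$--torsionfree, so $\Ann(M)=0$ is \emph{not} core totally $\sigma$--cosimple, and the lemma as stated is false for this $M$.

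For comparison, the paper's own proof handles the case $\sigma M\neq M$ exactly as you would in the cyclic situation: any $x\in M\setminus\sigma M$ has $xA$ not totally $\sigma$--torsion, hence $xA=M$, so $A/\Ann(M)\cong M$ and the conclusion is immediate. In the remaining case $\sigma M=M$ the paper simply asserts ``we have that $M$ is cyclic and we can proceed in the same way'', which directly contradicts the observation made just before the lemma that such $M$ is never finitely generated. So the paper's argument shares your gap; the example above shows it cannot be repaired without an extra hypothesis such as restricting to cyclic $M$ or to the case $\sigma M\neq M$.
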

\begin{proof}
Since $M$ is core totally $\sigma$--simple we have two possibilities for $M$:
\begin{enumerate}[(1)]\sepa
\item\label{it:le:20191202-1}
$\sigma{M}=M$, and
\item\label{it:le:20191202-2}
$\sigma{M}\neq{M}$.
\end{enumerate}
\par
In case \eqref{it:le:20191202-2} there exists $x\in{M}\setminus\sigma{M}$, such that $xA=M$. Indeed, since $xA\subseteq{M}$ and it is not totally $\sigma$--torsion, then $xa=M$. Hence $\Ann(M)=\Ann(x)$, and $A/\Ann(x)$ is not totally $\sigma$--torsion. The rest is obvious.
\par
In case \eqref{it:le:20191202-1} we have that $M$ is cyclic and we can proceed in the same way.
\end{proof}

\begin{proposition}
Let $M$ be a totally $\sigma$--simple $A$--module, then $\Ann(M)\subseteq{A}$ is totally $\sigma$--cosimple.
\end{proposition}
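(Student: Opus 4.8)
The plan is to reduce the statement to its core and then quote Lemma~\eqref{le:20191202}. First note that, unwinding the definitions, the assertion that $\Ann(M)\subseteq A$ is totally $\sigma$--cosimple is exactly the assertion that the cyclic module $A/\Ann(M)$ is totally $\sigma$--simple, so that is what I would prove. Fix a companion ideal $\ideal{h}\in\mathcal{L}(\sigma)$ of $M$ and put $C=M\ideal{h}$, the core submodule. By Proposition~\eqref{pr:20191130}(2), $C$ is not totally $\sigma$--torsion and every proper submodule of $C$ is totally $\sigma$--torsion, i.e., $C$ is core totally $\sigma$--simple; hence Lemma~\eqref{le:20191202} applies and $\Ann(C)\subseteq A$ is core totally $\sigma$--cosimple, which unwinds to: $A/\Ann(C)$ is core totally $\sigma$--simple, and in particular totally $\sigma$--simple.

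Next I would connect $\Ann(M)$ with $\Ann(C)$. Since $C=M\ideal{h}\subseteq M$, clearly $\Ann(M)\subseteq\Ann(C)$. For the other direction, from $C\,\Ann(C)=0$, i.e., $M\ideal{h}\,\Ann(C)=0$, commutativity of $A$ lets me rearrange the product to $M\,\Ann(C)\,\ideal{h}=0$, so that $\ideal{h}\,\Ann(C)\subseteq\Ann(M)$. Because $\ideal{h}\in\mathcal{L}(\sigma)$, the $A$--module $\Ann(C)/\Ann(M)$ is annihilated by $\ideal{h}$ and is therefore totally $\sigma$--torsion.

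Finally I would conclude by the earlier proposition showing that a module is totally $\sigma$--simple if, and only if, its quotient by a totally $\sigma$--torsion submodule is so, applied to the ambient module $A/\Ann(M)$ and its totally $\sigma$--torsion submodule $\Ann(C)/\Ann(M)$: since the quotient is $A/\Ann(C)$, which is totally $\sigma$--simple, $A/\Ann(M)$ is totally $\sigma$--simple, and hence $\Ann(M)$ is totally $\sigma$--cosimple (a cosimple companion ideal being any companion ideal of $A/\Ann(M)$). The only genuinely substantive step is the annihilator identity $\ideal{h}\,\Ann(C)\subseteq\Ann(M)$: this is where commutativity and the definition of the core really enter, and it is what makes the passage from a possibly non-cyclic $M$ to the cyclic $A/\Ann(M)$ go through. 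Everything else is transcription of the dictionary between (core) totally $\sigma$--cosimple submodules of $A$ and (core) totally $\sigma$--simple cyclic modules.
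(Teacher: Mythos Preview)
Your proposal is correct and follows essentially the same route as the paper. The paper phrases the key annihilator relation as the identity $\Ann(M\ideal{h})=(\Ann(M):\ideal{h})$ and packages the conclusion in the short exact sequence $0\to(\Ann(M):\ideal{h})/\Ann(M)\to A/\Ann(M)\to A/\Ann(M\ideal{h})\to0$, but this is exactly your observation that $\ideal{h}\,\Ann(C)\subseteq\Ann(M)$ (so the left term is totally $\sigma$--torsion) together with Lemma~\eqref{le:20191202} and the totally $\sigma$--torsion--quotient proposition you invoke at the end.
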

\begin{proof}
Let $M\ideal{h}\subseteq{M}$ the core of $M$, we have $\Ann(M\ideal{h})=\Ann(M):\ideal{h}$, hence we can build a short exact sequence
$0
\to\dfrac{\Ann(M):\ideal{h}}{\Ann(M)}
\to\dfrac{A}{\Ann(M)}
\to\dfrac{A}{\Ann(M\ideal{h})}
\to0$. Since $\dfrac{\Ann(M):\ideal{h}}{\Ann(M)}$ is totally $\sigma$--torsion, we have the result.
\end{proof}

If $M=A$ we may determine more precisely the core totally $\sigma$--cosimple ideals.

\begin{proposition}\label{pr:20191205a}
Let $\ideal{a}\subseteq{A}$ be an ideal. If $\ideal{a}\subseteq{A}$ is core totally $\sigma$--cosimple then:
\begin{enumerate}[(1)]\sepa
\item
$\ideal{a}$ is prime.
\item
$\ideal{a}$ is maximal in $\mathcal{K}(\sigma)$, i.e., $\ideal{a}\in\mathcal{C}(\sigma)$.
\end{enumerate}
In conclusion, core totally $\sigma$--cosimple ideals are exactly the ideals in $\mathcal{C}(\sigma)$.
\end{proposition}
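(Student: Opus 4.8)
The plan is to prove the two assertions in turn, then read off the final sentence by combining with Lemma~\ref{le:20191202} (which tells us that core totally $\sigma$--cosimple ideals arise as annihilators of core totally $\sigma$--simple modules) and with the fact, already recorded in Corollary~\ref{co:20191118b}, that $\mathcal{K}(\sigma)=\mathcal{C}(\sigma)$ for totally $\sigma$--artinian rings; here, however, we work directly from the definition of core totally $\sigma$--cosimple ideal, so the argument should not presuppose that $A$ is totally $\sigma$--artinian.

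For part (1), suppose $\ideal{a}\subseteq{A}$ is core totally $\sigma$--cosimple, so $\ideal{a}\notin\mathcal{L}(\sigma)$ and for every $\ideal{a}\subseteq\ideal{b}\subsetneqq{A}$ the quotient $\ideal{b}/\ideal{a}$ is totally $\sigma$--torsion, i.e. there is $\ideal{h}\in\mathcal{L}(\sigma)$ with $\ideal{b}\ideal{h}\subseteq\ideal{a}$. To show $\ideal{a}$ is prime, take $x,y\in{A}$ with $xy\in\ideal{a}$ but $x\notin\ideal{a}$; I would apply the core condition to the ideal $\ideal{b}=\ideal{a}+xA$, getting $\ideal{h}\in\mathcal{L}(\sigma)$ with $(\ideal{a}+xA)\ideal{h}\subseteq\ideal{a}$, hence $x\ideal{h}\subseteq\ideal{a}$. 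If also $y\notin\ideal{a}$, then apply the core condition again to $\ideal{a}+yA$ to get $\ideal{h}'\in\mathcal{L}(\sigma)$ with $y\ideal{h}'\subseteq\ideal{a}$. The obstacle is that $x\ideal{h}\subseteq\ideal{a}$ and $y\ideal{h}'\subseteq\ideal{a}$ do not immediately force $x$ or $y$ into $\ideal{a}$; one must instead exploit that $A/\ideal{a}$ is \emph{not} totally $\sigma$--torsion while every proper ideal above $\ideal{a}$ gives a totally $\sigma$--torsion quotient. The clean way is: the set $\{x\in{A}\mid x\ideal{h}\subseteq\ideal{a}\text{ for some }\ideal{h}\in\mathcal{L}(\sigma)\}$ is exactly $\Clt{\sigma}{A}{\ideal{a}}$, the $\sigma$--closure of $\ideal{a}$; the core condition says this closure equals $A$ unless\ldots\ wait --- rather, the core condition says every \emph{proper} ideal containing $\ideal{a}$ is $\sigma$--dense in $\ideal{a}$, so $\Clt{\sigma}{A}{\ideal{a}}$ is either $A$ or a proper ideal; since $\ideal{a}\notin\mathcal{L}(\sigma)$ means $A/\ideal{a}\notin\mathcal{T}_\sigma$, we get $\Clt{\sigma}{A}{\ideal{a}}\neq{A}$, so $\Clt{\sigma}{A}{\ideal{a}}$ is the unique largest proper ideal containing $\ideal{a}$, forcing $A/\Clt{\sigma}{A}{\ideal{a}}$ to be a field and $\Clt{\sigma}{A}{\ideal{a}}$ to be maximal. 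Then from $x\ideal{h}\subseteq\ideal{a}$ and $y\ideal{h}'\subseteq\ideal{a}$ we get $x,y\in\Clt{\sigma}{A}{\ideal{a}}$; but $\Clt{\sigma}{A}{\ideal{a}}$ is prime (being maximal), so from $xy\in\ideal{a}\subseteq\Clt{\sigma}{A}{\ideal{a}}$ we deduce $x\in\Clt{\sigma}{A}{\ideal{a}}$ or $y\in\Clt{\sigma}{A}{\ideal{a}}$ --- which is consistent but not yet a contradiction. The actual contradiction: if $x\notin\ideal{a}$ and $y\notin\ideal{a}$ then $\ideal{a}\subsetneqq\Clt{\sigma}{A}{\ideal{a}}$ is a proper inclusion, but one checks $\ideal{a}=\Clt{\sigma}{A}{\ideal{a}}$ is impossible only if there \emph{exists} a proper ideal strictly above $\ideal{a}$, i.e. $\ideal{a}$ is not already maximal. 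I will have to argue that $\ideal{a}$ itself is prime by noting $\ideal{a}=\ker(A\to{A}/\Clt{\sigma}{A}{\ideal{a}})$ composed appropriately --- concretely, one shows $\ideal{a}$ is $\sigma$--torsionfree-cocritical-type, so $\ideal{a}$ must in fact coincide with a prime, by the classification on page~\pageref{pg:20191204} applied to $M=A/\ideal{a}$: there, case (2) ($\sigma{M}=M$) is excluded since $A/\ideal{a}$ is cyclic and core totally $\sigma$--simple modules that are cyclic fall in case (1), whence $A/\sigma(A/\ideal{a})$ is simple, $\sigma(A/\ideal{a})$ is totally $\sigma$--torsion, and one concludes $\sigma(A/\ideal{a})=0$ because $\sigma(A/\ideal{a})\subsetneqq{A}/\ideal{a}$ corresponds to a proper ideal above $\ideal{a}$, hence is totally $\sigma$--torsion already, and the only submodule of a core totally $\sigma$--simple module that is totally $\sigma$--torsion \emph{and} $\sigma$--closed is $0$; thus $A/\ideal{a}$ is simple, so $\ideal{a}$ is maximal, in particular prime.

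Once part (1) is in hand, part (2) is quick. Since $\ideal{a}$ is prime and $\ideal{a}\notin\mathcal{L}(\sigma)$, we have $A/\ideal{a}\in\mathcal{F}_\sigma$ (for a prime $\ideal{p}$, $A/\ideal{p}$ is $\sigma$--torsion or $\sigma$--torsionfree, and $\ideal{a}\notin\mathcal{L}(\sigma)$ rules out the former), so $\ideal{a}\in\mathcal{K}(\sigma)$. To see $\ideal{a}$ is maximal in $\mathcal{K}(\sigma)$, suppose $\ideal{a}\subseteq\ideal{q}$ with $\ideal{q}$ prime and $\ideal{q}\in\mathcal{K}(\sigma)$, i.e. $\ideal{q}\notin\mathcal{L}(\sigma)$; if $\ideal{q}\neq{A}$ then by the core condition $\ideal{q}/\ideal{a}$ is totally $\sigma$--torsion, so some $\ideal{h}\in\mathcal{L}(\sigma)$ has $\ideal{q}\ideal{h}\subseteq\ideal{a}\subseteq\ideal{q}$, hence $\ideal{q}\ideal{h}\subseteq\ideal{q}$ trivially --- I need instead $\ideal{h}\subseteq\ideal{q}$: from $\ideal{q}/\ideal{a}$ totally $\sigma$--torsion and $A/\ideal{a}$ having unique maximal ideal $\ideal{q}'=\Clt{\sigma}{A}{\ideal{a}}$, actually the argument of the previous paragraph already showed $\ideal{a}$ is \emph{maximal}, so $\ideal{q}=\ideal{a}$ or $\ideal{q}=A$, and the claim follows. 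Finally, the converse --- every $\ideal{p}\in\mathcal{C}(\sigma)$ is core totally $\sigma$--cosimple --- is checked by verifying the three defining conditions: $\ideal{p}\notin\mathcal{L}(\sigma)$ since $\ideal{p}\in\mathcal{K}(\sigma)$; for any proper $\ideal{b}\supseteq\ideal{p}$ with $\ideal{b}$ not necessarily prime, maximality of $\ideal{p}$ in $\mathcal{K}(\sigma)$ forces every prime over $\ideal{b}$ to lie in $\mathcal{Z}(\sigma)$, so $\ideal{b}\in\mathcal{L}(\sigma)$, giving $\ideal{h}\in\mathcal{L}(\sigma)$ with $\ideal{h}\subseteq\ideal{b}$ and hence $\ideal{b}\ideal{h}\subseteq\ideal{b}$, and one refines this to $\ideal{b}\ideal{h}\subseteq\ideal{p}$ using that $A/\ideal{p}$ is a domain; the companion-ideal condition $M\ideal{h}\subseteq\ideal{b}$ for not-totally-$\sigma$--torsion $\ideal{b}/\ideal{p}$ is vacuous since all such $\ideal{b}/\ideal{p}$ are in fact totally $\sigma$--torsion by the previous point. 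I expect the main obstacle to be the first paragraph: pinning down that $\ideal{a}$ is actually \emph{maximal} (not merely prime) and isolating the right reason that $\sigma(A/\ideal{a})=0$, i.e. correctly using the core totally $\sigma$--simple classification from page~\pageref{pg:20191204} rather than just the superficial cosimplicity hypothesis.
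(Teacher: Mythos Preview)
Your proposal has a real problem at the very first step: you are working with the wrong reading of ``core totally $\sigma$--cosimple''. You take the hypothesis to mean that for every proper $\ideal{b}\supsetneqq\ideal{a}$ the submodule $\ideal{b}/\ideal{a}$ is totally $\sigma$--torsion (equivalently, $A/\ideal{a}$ is core totally $\sigma$--\emph{simple}). The paper's proof, and the converse clause ``$A/\ideal{p}$ is $\sigma$--cocritical'', make clear that the intended condition is the dual one: for every $\ideal{a}\subsetneqq\ideal{b}\subsetneqq A$ the quotient $A/\ideal{b}$ is totally $\sigma$--torsion, i.e.\ $\ideal{b}\in\mathcal{L}(\sigma)$. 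With that reading both parts are two lines. For (1): if $x,y\notin\ideal{a}$ and $xy\in\ideal{a}$, then $\ideal{a}+xA,\ideal{a}+yA\in\mathcal{L}(\sigma)$, hence $(\ideal{a}+xA)(\ideal{a}+yA)\in\mathcal{L}(\sigma)$; but this product is contained in $\ideal{a}$, forcing $\ideal{a}\in\mathcal{L}(\sigma)$, a contradiction. For (2): if $\ideal{a}\subsetneqq\ideal{p}$ with $\ideal{p}\in\mathcal{K}(\sigma)$, the core condition gives $\ideal{p}\in\mathcal{L}(\sigma)$, contradicting $\ideal{p}\in\mathcal{K}(\sigma)$. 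The converse is immediate because for $\ideal{p}\in\mathcal{C}(\sigma)$ every proper ideal strictly above $\ideal{p}$ lies in $\mathcal{Z}(\sigma)$, hence in $\mathcal{L}(\sigma)$.

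Beyond the definitional slip, your argument has two concrete gaps. First, the assertion that ``the only submodule of a core totally $\sigma$--simple module that is totally $\sigma$--torsion and $\sigma$--closed is $0$'' is never justified, and nothing in the classification on page~\pageref{pg:20191204} gives it; that classification only tells you $\sigma(A/\ideal{a})$ is totally $\sigma$--torsion and $M/\sigma M$ is simple, not that $\sigma M=0$. Second, and more telling, your endgame is to prove $\ideal{a}$ is \emph{maximal} in $A$. Under the correct hypothesis this is simply false: for a non--field integral domain $D$ with the usual torsion theory $\sigma=\sigma_{D\setminus\{0\}}$, the zero ideal lies in $\mathcal{C}(\sigma)$ (and is core totally $\sigma$--cosimple in the intended sense) but is not maximal. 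So the fact that your route aims at maximality is itself a signal that the working definition is off. Recalibrate to the condition $\ideal{b}\in\mathcal{L}(\sigma)$ for all $\ideal{a}\subsetneqq\ideal{b}\subsetneqq A$, and the whole proposition collapses to the short argument above; none of the $\sigma$--closure machinery, the case split from page~\pageref{pg:20191204}, or Lemma~\ref{le:20191202} is needed.
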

\begin{proof}
Since $\ideal{a}\subseteq{A}$ is core totally $\sigma$--cosimple, it is not totally $\sigma$--torsion, hence $\ideal{a}\notin\mathcal{L}(\sigma)$.
\par
(1). %
Let $\ideal{a}_1,\ideal{a}_2\subseteq{A}$ be proper ideals properly containing  $\ideal{a}$ such that $\ideal{a}_1\ideal{a}_2\subseteq\ideal{a}$. Since $\ideal{a}\subsetneqq\ideal{a}_1\subsetneqq{A}$, then $A/\ideal{a}_1$ is totally $\sigma$--torsion, hence $\ideal{a}_1\in\mathcal{L}(\sigma)$. Similar result holds for $\ideal{a}_2$. Therefore, $\ideal{a}_1\ideal{a}_2\in\mathcal{L}(\sigma)$, and $\ideal{a}\in\mathcal{L}(\sigma)$, which is a contradiction.
\par
(2). %
Since $\ideal{a}\notin\mathcal{L}(\sigma)$ is prime, then $\ideal{a}\in\mathcal{K}(\sigma)$. Let $\ideal{a}\subseteq\ideal{p}\in\mathcal{K}(\sigma)$, since $A/\ideal{p}$ is $\sigma$--torsionfree and non--zero, it is not totally $\sigma$--torsion, hence $\ideal{p}=\ideal{a}$. Therefore, $\ideal{a}\in\mathcal{K}(\sigma)$ is maximal and $\ideal{a}\in\mathcal{C}(\sigma)$.
\par
The converse is obvious because for any $\ideal{p}\in\mathcal{C}(\sigma)$ we have that $A/\ideal{p}$ is $\sigma$--cocritical.
\end{proof}

The \textbf{core totally $\sigma$--Jacobson radical} of an $A$--module $M$ is defined as the intersection of all core totally $\sigma$--cosimple submodule, and we represent it by $t\Jac(M)$.

\begin{lemma}
Let $M$ be an $A$--module, then $t\Jac\left(M/t\Jac(M)\right)=0$.
\end{lemma}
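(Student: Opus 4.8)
The statement to prove is $t\Jac\!\left(M/t\Jac(M)\right)=0$, which is the standard "semisimple-quotient" identity, here relative to the core totally $\sigma$--cosimple submodules. The plan is to exploit the lattice-isomorphism between submodules of $M/t\Jac(M)$ containing no extra structure and submodules of $M$ containing $t\Jac(M)$, together with the characterization of core totally $\sigma$--cosimple ideals/submodules already established. Write $J=t\Jac(M)$, so $J=\bigcap\{N\mid N\subseteq M\text{ is core totally }\sigma\text{--cosimple}\}$. I would first observe that a submodule $\overline{N}=N/J\subseteq M/J$ (with $J\subseteq N$) is core totally $\sigma$--cosimple in $M/J$ if and only if $N$ is core totally $\sigma$--cosimple in $M$; this follows because the condition "$(M/J)/(N/J)$ is not totally $\sigma$--torsion" is literally "$M/N$ is not totally $\sigma$--torsion" (third isomorphism theorem), and likewise the condition that every $N/J\subseteq H/J\subsetneq M/J$ gives $H/N$ totally $\sigma$--torsion is unchanged under passing to the quotient by $J$. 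Hence the core totally $\sigma$--cosimple submodules of $M/J$ are exactly the $N/J$ for $N$ ranging over the core totally $\sigma$--cosimple submodules of $M$ (all of which contain $J$ by definition of the intersection).

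With that correspondence in hand, the computation is immediate:
\[
t\Jac(M/J)=\bigcap_{N\text{ c.t.}\sigma\text{-cosimple in }M} (N/J)
=\Bigl(\bigcap_{N} N\Bigr)\big/ J = J/J = 0,
\]
using that intersection commutes with the quotient map on submodules all of which contain $J$. One edge case must be handled separately: if $M$ has \emph{no} core totally $\sigma$--cosimple submodules at all, then by the usual convention $t\Jac(M)=M$, so $M/t\Jac(M)=0$ and the claim is trivial; and if $M/J$ happens to have no such submodules, the formula above still reads as the empty intersection $=M/J$, but then we must argue $M/J=0$, which again reduces to the previous correspondence applied in the contrapositive. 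So the only genuine content is the correspondence lemma.

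I expect the main obstacle to be precisely the verification that core totally $\sigma$--cosimpleness is preserved \emph{and reflected} along the projection $M\to M/J$, i.e. making sure both implications of the "if and only if" go through cleanly using the definitions given on the previous page; the preservation direction is routine from the third isomorphism theorem, but the reflecting direction requires checking that an arbitrary $H$ with $N/J\subseteq H/J\subsetneq M/J$ lifts to an $H$ with $N\subseteq H\subsetneq M$, which is automatic, and that the totally $\sigma$--torsion condition on $H/N$ is genuinely the same before and after — here one just notes $(H/J)/(N/J)\cong H/N$ as $A$--modules, so "totally $\sigma$--torsion" (the existence of $\ideal{h}\in\mathcal{L}(\sigma)$ killing the module) transfers verbatim. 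Once the correspondence is nailed down, the rest is the one-line intersection computation above, plus the trivial empty-family bookkeeping. I would keep the write-up short: state the correspondence, prove it in two sentences, then conclude.
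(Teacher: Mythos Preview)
Your proposal is correct and follows exactly the same approach as the paper: the paper's proof is the single sentence ``$N\subseteq M/t\Jac(M)$ is core totally $\sigma$--cosimple if, and only if, $N\subseteq M$ is core totally $\sigma$--cosimple,'' which is precisely your correspondence lemma, with the intersection computation left implicit. Your version is more carefully written (you spell out the third isomorphism argument and handle the empty-family edge case), but the underlying idea is identical.
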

\begin{proof}
We have $N\subseteq{M/t\Jac(M)}$ is core totally $\sigma$--cosimple if, and only if, $N\subseteq{M}$ is core totally $\sigma$--cosimple.
\end{proof}

\medskip
\begin{proposition}\label{pr:20191214c}
\begin{enumerate}[(1)]\sepa
\item\label{it:pr:20191214c-1}
Every totally $\sigma$--simple $A$--module is totally $\sigma$--artinian.
\item\label{it:pr:20191214c-2}
Every core totally $\sigma$--simple $A$--module is totally $\sigma$--noetherian.
\item\label{it:pr:20191214c-3}
Every totally $\sigma$--simple $A$--module is totally $\sigma$--noetherian.
\end{enumerate}
\end{proposition}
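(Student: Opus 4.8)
The plan is to prove the three parts in a natural order, deriving the later ones from the earlier ones via the structural facts already established for totally $\sigma$--simple modules.

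First I would prove \eqref{it:pr:20191214c-1}. Let $M$ be totally $\sigma$--simple with companion ideal $\ideal{h}\in\mathcal{L}(\sigma)$, and let $M\ideal{h}$ be its core. By Theorem~\eqref{th:20200104} it suffices to show that every quotient $M/N$ is totally $\sigma$--finitely cogenerated, or just to verify the minimal condition directly: take a decreasing chain $N_1\supseteq N_2\supseteq\cdots$ of submodules. If every $N_n$ is not totally $\sigma$--torsion, then by Proposition~\eqref{pr:20191130}(1) each $N_n$ contains the core $M\ideal{h}$, so $M\ideal{h}\subseteq\cap_n N_n$, and in fact $M\ideal{h}\subseteq N_n$ for every $n$; thus $N_1\ideal{h}\subseteq M\ideal{h}\subseteq N_n$ for all $n$, and the chain is totally $\sigma$--stable at index $1$. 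If some $N_m$ is totally $\sigma$--torsion, then there is $\ideal{h}'\in\mathcal{L}(\sigma)$ with $N_m\ideal{h}'=0\subseteq N_s$ for all $s\geq m$, and again the chain is totally $\sigma$--stable. So $M$ is totally $\sigma$--artinian.

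Next I would prove \eqref{it:pr:20191214c-2}. Let $M$ be core totally $\sigma$--simple, so $M$ is not totally $\sigma$--torsion and every proper submodule of $M$ is totally $\sigma$--torsion. Dually to the argument above, I check the maximal condition for totally $\sigma$--noetherian: given an increasing chain $N_1\subseteq N_2\subseteq\cdots$, either all $N_n$ are proper, hence all totally $\sigma$--torsion, in which case one argues that the union is controlled — more carefully, since each proper $N_n$ satisfies $N_n\ideal{h}_n=0$ for some $\ideal{h}_n\in\mathcal{L}(\sigma)$, but to get a single $\ideal{h}$ I should instead invoke the characterization of totally $\sigma$--noetherian via quotients being totally $\sigma$--finitely generated (from \cite{Jara:2020a}): every quotient $M/N$ of a core totally $\sigma$--simple module is either $0$ (when $N=M$) or again, up to a totally $\sigma$--torsion kernel, core totally $\sigma$--simple and hence cyclic in the relevant sense, so finitely generated modulo totally $\sigma$--torsion. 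Concretely, in the case $\sigma M\neq M$, $M$ is cyclic, say $M=xA$, so $M$ is finitely generated, hence totally $\sigma$--noetherian; in the case $\sigma M=M$, every proper submodule is totally $\sigma$--torsion, so any proper submodule $N$ has $N\ideal{h}=0$ for some $\ideal{h}\in\mathcal{L}(\sigma)$, whence $N$ is totally $\sigma$--finitely generated (by $0$), and $M/N$, being core totally $\sigma$--simple or zero, is again handled by the same dichotomy; this gives that every submodule is totally $\sigma$--finitely generated, i.e., $M$ is totally $\sigma$--noetherian.

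Finally, \eqref{it:pr:20191214c-3} follows by combining the two cases with Proposition~\eqref{pr:20191130}. Let $M$ be totally $\sigma$--simple with companion ideal $\ideal{h}$ and core $M\ideal{h}$. There is a short exact sequence $0\to M\ideal{h}\to M\to M/M\ideal{h}\to 0$ where $M/M\ideal{h}$ is totally $\sigma$--torsion (since $M\ideal{h}$ is not totally $\sigma$--torsion, hence $M/M\ideal{h}$ is by the minimality of the core) and $M\ideal{h}$ is core totally $\sigma$--simple by Proposition~\eqref{pr:20191130}(2). By part \eqref{it:pr:20191214c-2}, $M\ideal{h}$ is totally $\sigma$--noetherian; totally $\sigma$--torsion modules are trivially totally $\sigma$--noetherian; and totally $\sigma$--noetherian modules are closed under extensions (the dual of Proposition~\eqref{pr:20190102}(2), as in \cite{Jara:2020a}). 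Hence $M$ is totally $\sigma$--noetherian. The main obstacle I anticipate is part \eqref{it:pr:20191214c-2}: pinning down precisely which submodules of a core totally $\sigma$--simple module are totally $\sigma$--finitely generated requires carefully exploiting that proper submodules are totally $\sigma$--torsion and invoking the correct closure properties from \cite{Jara:2020a}; once that bookkeeping is done, \eqref{it:pr:20191214c-1} and \eqref{it:pr:20191214c-3} are routine.
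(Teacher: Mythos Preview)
Your treatments of \eqref{it:pr:20191214c-1} and \eqref{it:pr:20191214c-3} coincide with the paper's: the descending--chain dichotomy (each $N_n$ either contains the core $M\ideal{h}$, or some $N_m$ is totally $\sigma$--torsion) for the first, and the extension $0\to M\ideal{h}\to M\to M/M\ideal{h}\to 0$ with $M\ideal{h}$ core totally $\sigma$--simple and $M/M\ideal{h}$ totally $\sigma$--torsion for the third.

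For \eqref{it:pr:20191214c-2} there is a genuine gap, exactly where you flagged it. You begin with an ascending chain but then abandon it for the characterization via totally $\sigma$--finitely generated submodules, and that route does not close. In the subcase $\sigma M=M$ you only verify that every \emph{proper} submodule is totally $\sigma$--finitely generated (by $0$); the aside about $M/N$ is beside the point, since totally $\sigma$--noetherian concerns submodules, not quotients. And $M$ itself is \emph{not} totally $\sigma$--finitely generated in this subcase: any finitely generated $M'\subseteq M$ is proper (this subcase forces $M$ non--finitely generated), hence totally $\sigma$--torsion, and if $M/M'$ were also totally $\sigma$--torsion then $M$ would be. So the ``every submodule is totally $\sigma$--finitely generated'' strategy cannot succeed here. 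In the subcase $\sigma M\neq M$, the step ``$M$ cyclic, hence totally $\sigma$--noetherian'' is also a jump as written; it becomes correct once you add that proper submodules are totally $\sigma$--torsion (so every submodule is either $M$ itself, which is cyclic, or totally $\sigma$--finitely generated by $0$), but that must be said.

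The paper stays with the ascending chain $N_1\subseteq N_2\subseteq\cdots$ throughout and splits along the dichotomy already established for core totally $\sigma$--simple modules. In the cyclic case, $\cup_n N_n$ is either a proper submodule---hence totally $\sigma$--torsion, so the chain $\sigma$--stabilizes---or equals $M$, and cyclicity forces $N_m=M$ at some finite stage. In the case $\sigma M\neq M$, the paper uses that $\sigma M$ is a proper submodule and therefore totally $\sigma$--torsion: either every $N_n\subseteq\sigma M$ and the chain $\sigma$--stabilizes, or some $N_m\not\subseteq\sigma M$ is not totally $\sigma$--torsion and the companion--ideal property gives $M\ideal{h}\subseteq N_m$. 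Sticking with the chain is what makes the argument go through without needing $M$ itself to be totally $\sigma$--finitely generated.
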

\begin{proof}
\eqref{it:pr:20191214c-1}. %
Let $M$ be totally $\sigma$--simple and $N_1\supseteq{N_2}\supseteq\cdots$ be a decreasing chain of submodules of $M$. If for every index $i$ we have that $N_i$ is not totally $\sigma$--torsion, and $\ideal{h}\in\mathcal{L}(\sigma)$ is the companion ideal of $M$, then $M\ideal{h}\subseteq{N_i}$, hence $N_1\ideal{h}\subseteq{M\ideal{h}}\subseteq{N_i}$, for every index $i$. If there exists an index $m$ such that $N_m$ is totally $\sigma$--torsion, there exists $\ideal{h}'\in\mathcal{L}(\sigma)$ such that $N_m\ideal{h}'=0\subseteq{N_s}$, for every $s\geq{m}$.
\par
\eqref{it:pr:20191214c-2}. %
If $N_1\subseteq{N_2}\subseteq\cdots$ is an ascending chain of submodules of $M$, and $M$ is core totally $\sigma$--simple we studied the two cases in page~\pageref{pg:20191204}.
\par
(2.1). %
If $M$ is $\sigma$--torsion, then $M$ is cyclic, hence $\cup_nN_n\subseteq{M}$ is either totally $\sigma$--torsion or $\cup_nN_n=N$. In the second case, there exists an index $m$ such that $M=N_m$. In both cases the chain is $\sigma$--stable.
\par
(2.2). %
If $M$ is not $\sigma$--torsion, then $\sigma{M}$ is totally $\sigma$--torsion. If $\cup_nN_n\subseteq\sigma{M}$, then the chain $\sigma$-stabilizes. If there exists an index $m$ such that $N_m\nsubseteq\sigma{M}$, and $\ideal{h}\in\mathcal{L}(\sigma)$ is the companion ideal of $M$, then $M\ideal{h}\subseteq{N_m}$, and in this case the chain stabilizes.
\par
\eqref{it:pr:20191214c-3}. %
If $M$ is totally $\sigma$--simple, with companion ideal $\ideal{h}\in\mathcal{L}(\sigma)$, then $M\ideal{h}$ is core totally $\sigma$--simple, hence it is totally $\sigma$--noetherian. Otherwise, $M/M\ideal{h}$ is totally $\sigma$--torsion, hence totally $\sigma$--noetherian. Therefore, $M$ is totally $\sigma$--noetherian because it is an extension of $M\ideal{h}$ by $M/M\ideal{h}$.
\end{proof}

\begin{lemma}\label{le:20191214d}
Let $M$ be an artinian $A$--module such that $t\Jac(M)=0$, there exists $T\subseteq{M}$, totally $\sigma$--torsion such that $M/T\subseteq\oplus_{j=1}^tS_j$, for a finite family of core totally $\sigma$--simple $A$--modules. In particular, $M$ is totally $\sigma$--noetherian.
\end{lemma}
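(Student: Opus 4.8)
The plan is to produce the required finite family of core totally $\sigma$--simple quotients directly from the definition of $t\Jac(M)$, using the finite--cogeneration properties that the artinian hypothesis forces, and then to read off that $M$ is totally $\sigma$--noetherian from Proposition~\eqref{pr:20191214c}. The first observation I would record is the elementary correspondence: a submodule $N\subseteq{M}$ is core totally $\sigma$--cosimple if, and only if, the quotient $M/N$ is core totally $\sigma$--simple. Indeed, the two clauses defining ``core totally $\sigma$--cosimple'' say exactly that $M/N$ is not totally $\sigma$--torsion and that every proper submodule $H/N$ of $M/N$ (i.e. $N\subseteq{H}\subsetneqq{M}$) is totally $\sigma$--torsion, which is the definition of $M/N$ being core totally $\sigma$--simple. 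Consequently $t\Jac(M)=\cap\{N\subseteq{M}\mid\;M/N\textrm{ is core totally }\sigma\textrm{--simple}\}$, and the hypothesis $t\Jac(M)=0$ says this intersection equals $0$; in particular it is totally $\sigma$--torsion.

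Next I would invoke the artinian hypothesis. Since $M$ is artinian it is totally $\sigma$--artinian (every artinian module is totally $\sigma$--artinian, Section~1), so by Theorem~\eqref{th:20200104} the module $M=M/0$ is totally $\sigma$--finitely cogenerated. Applying this to the family $\{N\mid\;M/N\textrm{ core totally }\sigma\textrm{--simple}\}$, whose intersection is $0$ and hence totally $\sigma$--torsion, yields finitely many core totally $\sigma$--cosimple submodules $N_1,\ldots,N_t$ such that $T:=\cap_{j=1}^t N_j$ is totally $\sigma$--torsion. (Since an artinian module is in fact honestly finitely cogenerated one may even take $T=0$, but the weaker statement is all that is needed.) The canonical map $M/T\longrightarrow\oplus_{j=1}^t M/N_j$ is then injective, and setting $S_j=M/N_j$ gives the asserted embedding $M/T\subseteq\oplus_{j=1}^t S_j$ into a finite direct sum of core totally $\sigma$--simple modules.

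For the final assertion, each $S_j$ is totally $\sigma$--noetherian by Proposition~\eqref{pr:20191214c}\eqref{it:pr:20191214c-2}; a finite direct sum of totally $\sigma$--noetherian modules is totally $\sigma$--noetherian, and a submodule of a totally $\sigma$--noetherian module is again totally $\sigma$--noetherian (the duals of Proposition~\eqref{pr:20190102}, available from \cite{Jara:2020a}), so $M/T$ is totally $\sigma$--noetherian. Since $T$ is totally $\sigma$--torsion it is totally $\sigma$--noetherian, and $M$, being an extension of $T$ by $M/T$, is totally $\sigma$--noetherian as well. I do not expect a genuine obstacle in this argument; the two points that need care are the core totally $\sigma$--cosimple/core totally $\sigma$--simple correspondence of the first paragraph, and the use of totally $\sigma$--finite cogeneration (Theorem~\eqref{th:20200104}) to replace the a priori infinite intersection defining $t\Jac(M)$ by a finite one.
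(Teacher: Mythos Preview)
Your argument is correct and follows essentially the same route as the paper: use $t\Jac(M)=0$ together with (totally) $\sigma$--finite cogeneration to extract finitely many core totally $\sigma$--cosimple submodules $N_1,\ldots,N_t$ with $T=\cap_j N_j$ totally $\sigma$--torsion, embed $M/T$ in $\oplus_j M/N_j$, and conclude via Proposition~\eqref{pr:20191214c}\eqref{it:pr:20191214c-2}. Your write-up is in fact slightly more careful than the paper's: you make explicit the correspondence between core totally $\sigma$--cosimple submodules and core totally $\sigma$--simple quotients, you invoke Theorem~\eqref{th:20200104} rather than leaving the finite-cogeneration step implicit, and you observe (correctly) that since $M$ is honestly artinian one may even take $T=0$.
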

\begin{proof}
If $t\Jac(M)=0$ the intersection of all core totally $\sigma$--cosimple submodules is zero, and since $M$ is $\sigma$--finitely cogenerated, there exists a finite family of core totally $\sigma$--cosimple submodules, $\{N_1,\ldots,N_t\}$ such that $\cap_{j=1}^tN_j$ is totally $\sigma$--torsion. If we call $T=\cap_{j=1}^tN_i$, then $M/T$ is a submodule of $\oplus_{j=1}^t(M/N_j)$. Finally, since $\oplus_{j=1}^t(M/N_j)$ is totally $\sigma$--noetherian, then $M/T$ is totally $\sigma$--noetherian, and we have $M$ is totally $\sigma$--noetherian.
\end{proof}

\begin{theorem}
If $A$ is a totally $\sigma$--artinian ring, then $A$ is totally $\sigma$--noetherian.
\end{theorem}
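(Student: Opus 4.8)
The plan is to reduce the statement to two ingredients that are already in hand. The first is Hopkins' theorem: a totally $\sigma$--artinian ring is $\sigma$--artinian, so by Corollary~\eqref{co:20191118} it is $\sigma$--noetherian, i.e. every ideal $\ideal{a}\subseteq A$ contains a finitely generated ideal $\ideal{a}'$ with $\ideal{a}/\ideal{a}'$ $\sigma$--torsion. The second is what one may call a \emph{collapse lemma}: over a totally $\sigma$--artinian ring, every $\sigma$--torsion $A$--module is in fact totally $\sigma$--torsion. Granting the collapse lemma, the quotient $\ideal{a}/\ideal{a}'$ produced by Hopkins' theorem is totally $\sigma$--torsion, so $\ideal{a}$ is totally $\sigma$--finitely generated; since $\ideal{a}$ was arbitrary, $A$ is totally $\sigma$--noetherian. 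Thus the whole proof reduces to the collapse lemma, which is the only step that is not purely formal.

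The proof of the collapse lemma that I would give runs as follows. Let $M$ be a $\sigma$--torsion $A$--module and consider the family of ideals $\mathcal{F}=\{(0:N)\mid\;N\subseteq M\textrm{ finitely generated}\}$, regarded as a family of submodules of the $A$--module $A$. It is nonempty (it contains $A=(0:0)$), and since every $m\in M$ has $(0:m)\in\mathcal{L}(\sigma)$ and Gabriel filters are closed under finite intersections, we get $\mathcal{F}\subseteq\mathcal{L}(\sigma)$. Because $A$ is totally $\sigma$--artinian, the module $A$ satisfies the minimal condition for $\sigma$ (every nonempty family of submodules has a $\sigma$--minimal element), so $\mathcal{F}$ has a $\sigma$--minimal element $(0:N_0)$: there is $\ideal{h}\in\mathcal{L}(\sigma)$ with $(0:N_0)\ideal{h}\subseteq(0:N)$ for every finitely generated $N\subseteq M$ with $(0:N)\subseteq(0:N_0)$. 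Given any $m\in M$, the ideal $(0:N_0+Am)=(0:N_0)\cap(0:m)$ lies in $\mathcal{F}$ and is contained in $(0:N_0)$, hence $(0:N_0)\ideal{h}\subseteq(0:m)$, i.e. $m\cdot(0:N_0)\ideal{h}=0$. As $m$ was arbitrary, $M\cdot(0:N_0)\ideal{h}=0$, and $(0:N_0)\ideal{h}\in\mathcal{L}(\sigma)$ since Gabriel filters are closed under products; therefore $M$ is totally $\sigma$--torsion.

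The only genuine obstacle is the collapse lemma, and its crux is the passage from ``each element of $M$ is annihilated by some ideal of $\mathcal{L}(\sigma)$'' to ``all of $M$ is annihilated by one ideal of $\mathcal{L}(\sigma)$''; this is precisely where the totally $\sigma$--artinian hypothesis enters, through the minimal condition on families of submodules combined with the closure of $\mathcal{L}(\sigma)$ under products. Sections 5 and 6 suggest a heavier, more structural alternative — make $C(A,\sigma)$ of finite length by Hopkins' theorem, control the primes in $\mathcal{K}(\sigma)$ through Corollary~\eqref{co:20191118b} and Proposition~\eqref{pr:20191205a}, and identify the successive factors of a finite filtration of $A$ with totally $\sigma$--torsion and (core) totally $\sigma$--simple modules, each totally $\sigma$--noetherian by Proposition~\eqref{pr:20191214c} and Lemma~\eqref{le:20191214d}, concluding by the extension closure of totally $\sigma$--noetherian modules from \cite{Jara:2020a} — but I expect the short route through the collapse lemma to be the cleanest.
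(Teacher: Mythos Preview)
Your proof is correct and substantially more direct than the paper's. The paper builds the apparatus of Sections~5 and~6 --- totally $\sigma$--simple modules and their cores, core totally $\sigma$--cosimple ideals, the radical $J=t\Jac(A)=\cap\{\ideal{p}\mid\ideal{p}\in\mathcal{C}(\sigma)\}$ --- and then filters $A$ by powers of $J$: Lemma~\eqref{le:20191214d} shows $A/J$ is totally $\sigma$--noetherian, the totally $\sigma$--artinian hypothesis makes $J^m/J^{m+1}$ totally $\sigma$--torsion from some index on, and an inductive extension argument on the short exact sequences $0\to J^k/J^{k+1}\to A/J^{k+1}\to A/J^k\to 0$ finishes. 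Your route bypasses all of this: the collapse lemma (every $\sigma$--torsion module over a totally $\sigma$--artinian ring is totally $\sigma$--torsion) is a one-step consequence of the $\sigma$--minimal condition from Section~\eqref{se:20190102} together with closure of Gabriel filters under products, and combined with classical Hopkins from Corollary~\eqref{co:20191118} it gives the theorem immediately. What you gain is brevity and a sharper formulation --- the collapse lemma is a clean standalone statement worth recording in its own right; what the paper's approach buys is the structural byproduct (identification of $t\Jac(A)$ with $\Jac_\sigma(A)$, the role of $\mathcal{C}(\sigma)$, Proposition~\eqref{pr:20191214c}), which may be wanted elsewhere even if it is overkill for this particular theorem. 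You were right to anticipate the paper's line in your final paragraph; that is indeed the route it takes.
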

\begin{proof}
Let $J=t\Jac(A)$. If $A$ is totally $\sigma$--artinian then $A/J$ is totally $\sigma$--artinian and totally $\sigma$--noetherian, by Lemma~\eqref{le:20191214d}.
We consider $A/J^2$, because for every ideal $\ideal{a}\subseteq{A}$ we have that $\ideal{a}\subseteq{A}$ is a core totally $\sigma$--cosimple ideal if, and only if, $\ideal{a}/J^2\subseteq{A/J^2}$ is core totally $\sigma$--cosimple, then $t\Jac(A/J^2)=J/J^2$, and the same holds for every $m\in\mathbb{N}$, i.e, $t\Jac(A/J^m)=J/J^m$.
\par
The decreasing chain $J=J^1\supseteq{J^2}\supseteq\cdots$ is $\sigma$--stable, hence there exist $m\in\mathbb{N}$ and $\ideal{h}\in\mathcal{L}(\sigma)$ such that $J^m\ideal{h}\subseteq{J^s}$ for every $s\geq{m}$.
We do induction on $m$. Let us assume $m=1$, then $J/J^2$ is totally $\sigma$--torsion, and we have a short exact sequence
\[
0\longrightarrow
J/J^2\longrightarrow
A/J^2\longrightarrow
A/J\longrightarrow0
\]
Since $J^1/J^2$ and $A/J$ are totally $\sigma$--artinian and totally $\sigma$--noetherian, then $A/J^2$ is. We assume the result holds for any positive integral number smallest than $m$ and that $J^m\ideal{h}\subseteq{J^s}$ for every $s\geq{m}$.
Consider the short exact sequence
\[
0\longrightarrow
J^m/J^{m+1}\longrightarrow
A/J^{m+1}\longrightarrow
A/J^m\longrightarrow0
\]
Since $J^m/J^{m+1}$ is totally $\sigma$--torsion and $A/J^m$ is totally $\sigma$--artinian and totally $\sigma$--noethe\-rian then $A/J^{m+1}$ is.
\end{proof}

For any ring $A$ the totally Jacobson $\sigma$--radical of $A$ which is the intersection of all core totally $\sigma$--cosimple ideals is the intersection of all elements in $\mathcal{C}(\sigma)$, see Proposition~\eqref{pr:20191205a}, which coincides with the \textbf{Jacobson $\sigma$--radical} of $A$, i.e.,
\[
t\Jac(A)=\Jac_\sigma(A)=\cap\{\ideal{p}\mid\;\ideal{p}\in\mathcal{C}(\sigma)\}.
\]

We show that there exist enough core totally $\sigma$--cosimple submodule in the following sense.

\begin{lemma}
Let $\sigma$ be a finite type hereditary torsion theory, for any totally $\sigma$--finitely generated module $M$ and any proper submodule $N\subseteq{M}$ such that $M/N$ is not totally $\sigma$--torsion, there exists a core totally $\sigma$--cosimple submodule $N\subseteq{H}\subsetneqq{M}$.
\end{lemma}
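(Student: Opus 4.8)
The plan is to mimic the usual noetherian argument that every proper submodule is contained in a maximal one, but carried out modulo totally $\sigma$--torsion and with the finite type hypothesis doing the job that finite generation normally does. First I would pass to the quotient $M/N$: replacing $M$ by $M/N$ and $N$ by $0$, it suffices to find, in any totally $\sigma$--finitely generated module $M'$ which is not totally $\sigma$--torsion, a proper submodule $H' \subsetneqq M'$ which is core totally $\sigma$--cosimple (then pull back). So from now on assume $N = 0$ and $M$ is not totally $\sigma$--torsion.

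Next I would consider the family $\mathcal{M}' = \{L \subseteq M \mid M/L \textrm{ is not totally }\sigma\textrm{--torsion}\}$, which was shown earlier to be $\sigma$--upper closed and is nonempty since $0 \in \mathcal{M}'$. I want to produce a maximal element of $\mathcal{M}'$ by a Zorn's lemma argument, and this is where the hypotheses enter. Given a chain $\{L_\lambda\}$ in $\mathcal{M}'$, its union $L = \cup_\lambda L_\lambda$ is the candidate upper bound, and the key point is to show $M/L$ is still not totally $\sigma$--torsion, i.e.\ $L \notin \mathcal{L}(M,\sigma)$-companion sense: there is no $\ideal{h} \in \mathcal{L}(\sigma)$ with $M\ideal{h} \subseteq L$. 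Here is where finite type and totally $\sigma$--finite generation combine: write $M = m_1 A + \cdots + m_r A + T$ where $T$ is a finitely generated submodule of $M$ with $M/\langle m_1,\ldots,m_r\rangle$ totally $\sigma$--torsion — more precisely use that $M$ is totally $\sigma$--finitely generated to reduce to the finitely generated case — and use that $\sigma$ being of finite type means every ideal in $\mathcal{L}(\sigma)$ contains a finitely generated ideal of $\mathcal{L}(\sigma)$. If $M\ideal{h} \subseteq L$ with $\ideal{h} = (a_1,\ldots,a_s)$ finitely generated, then each $m_i a_j$ lies in some $L_{\lambda(i,j)}$, and since the chain is totally ordered all finitely many of them lie in a single $L_\mu$; combined with the totally $\sigma$--finite generation of $M$ one concludes $M\ideal{h}' \subseteq L_\mu$ for a suitable $\ideal{h}' \in \mathcal{L}(\sigma)$, contradicting $L_\mu \in \mathcal{M}'$. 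Hence the union stays in $\mathcal{M}'$ and Zorn's lemma applies, giving a maximal element $H$.

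Finally I would verify that a maximal element $H$ of $\mathcal{M}'$ is core totally $\sigma$--cosimple, i.e.\ $M/H$ is not totally $\sigma$--torsion (by construction) and every $H \subseteq K \subsetneqq M$ has $K/H$ totally $\sigma$--torsion. Suppose $H \subseteq K \subsetneqq M$ with $K/H$ \emph{not} totally $\sigma$--torsion. Then $K \in \mathcal{M}'$? — not immediately, since being in $\mathcal{M}'$ concerns $M/K$, not $K/H$. So instead I argue: if $M/K$ were totally $\sigma$--torsion, say $M\ideal{h}_1 \subseteq K$, then from $K/H$ not totally $\sigma$--torsion and $\mathcal{M}'$ being $\sigma$--upper closed applied appropriately one derives that $H$ is not maximal, since one can enlarge $H$ toward $K$ — concretely, the lattice of submodules between $H$ and $K$ with non–totally–$\sigma$–torsion quotient over $H$ has, again by the Zorn argument above applied inside $K$, a maximal element $H'$ with $H \subseteq H' \subsetneqq K$, and $M/H'$ is then not totally $\sigma$--torsion (otherwise $M\ideal{h}_2 \subseteq H'$ forces, via $M\ideal{h}_1 \subseteq K$ and the finite type property, that $K\ideal{h}_3 \subseteq H'$, contradicting $K/H'$ not totally $\sigma$--torsion), so $H' \in \mathcal{M}'$ properly contains $H$, contradicting maximality. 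Therefore every $K$ strictly between $H$ and $M$ has $M/K$ not totally $\sigma$--torsion impossible unless... — cleaner: the maximality of $H$ in $\mathcal{M}'$ says precisely that for every $K \supsetneqq H$, $M/K$ \emph{is} totally $\sigma$--torsion; then for $H \subseteq K \subsetneqq M$ we have the exact sequence $0 \to K/H \to M/H \to M/K \to 0$ with $M/K$ totally $\sigma$--torsion, and since $M/H$ is not totally $\sigma$--torsion while $M/K$ is, one checks $K/H$ need not vanish, but the core condition only requires $K/H$ totally $\sigma$--torsion — which is false in general, so I must instead impose the companion ideal. I expect the genuine subtlety — and the main obstacle — to be exactly this last point: extracting the companion ideal $\ideal{h}$ witnessing "core totally $\sigma$--cosimple" uniformly for all $K$, which is where one must invoke that $M$ (hence $M/H$) is totally $\sigma$--finitely generated together with $\sigma$ of finite type, to replace the a priori $K$–dependent dense ideals by a single one; I would handle it by noting $M/H$ is itself totally $\sigma$--finitely generated and re-running the finite type bookkeeping of the Zorn step one more time.
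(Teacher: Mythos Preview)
Your Zorn's lemma argument on the family $\Gamma=\{H\mid N\subseteq H,\ M/H\text{ not totally }\sigma\text{--torsion}\}$ is exactly the paper's approach, and your handling of the chain condition---combining a finitely generated $\ideal{h}\in\mathcal{L}(\sigma)$ (finite type) with a finitely generated $N'\subseteq M$ such that $M\ideal{h}'\subseteq N'$ (totally $\sigma$--finite generation) to push $M\ideal{h}'\ideal{h}$ into a single member of the chain---matches the paper's proof essentially line for line. The preliminary reduction to $N=0$ is harmless but not in the paper; the paper works directly with $\Gamma$ containing $N$.

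Where you diverge from the paper is the long final paragraph. You correctly observe that maximality of $H$ in $\Gamma$ says ``$M/K$ is totally $\sigma$--torsion for every $K\supsetneqq H$'', whereas the core totally $\sigma$--cosimple condition asks ``$K/H$ is totally $\sigma$--torsion for every $H\subseteq K\subsetneqq M$'', and you then spiral trying to reconcile the two. The paper does none of this: its proof ends with the bare sentence ``A maximal submodule $N\subseteq M$ in $\Gamma$ is a core totally $\sigma$--cosimple submodule'', with no further verification. In the paper's own terminology, what Zorn hands you is literally a \emph{core totally $\sigma$--maximal} submodule; the paper simply identifies this with core totally $\sigma$--cosimple without comment. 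So your attempted extra work is not part of the paper's argument---either stop where the paper stops, or recognise that you have spotted a point the paper leaves unaddressed rather than a defect in your own reasoning.
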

\begin{proof}
  Let $\Gamma=\{H\subseteq{M}\mid\;N\subseteq{H}\subseteq{M}\textrm{ and $M/H$ is not totally $\sigma$--torsion}\}$. If $\Gamma=\{N\}$, then $N$ is maximal among those submodules which are not totally $\sigma$--torsion, hence it is core totally $\sigma$--cosimple. Otherwise, for any increasing chain $N_1\subseteq{N}_2\subseteq\cdots$ in $\Gamma$, we consider $\cup_{n\geq1}N_n$. If $M/\cup_{n\geq1}N_n$ is totally $\sigma$--torsion, there exists $\ideal{h}\in\mathcal{L}(\sigma)$, finitely generated, such that $M\ideal{h}\subseteq\cup_{n\geq1}N_n$, and there is an index $m$ such that $M\ideal{h}\subseteq{N_m}$. Otherwise, there exist $N'\subseteq{M}$, finitely generated, and $\ideal{h}'\in\mathcal{L}(\sigma)$, finitely generated, such that $M\ideal{h}'\subseteq{N'}$. In consequence, $M\ideal{h}'\ideal{h}\subseteq{N\ideal{h}}\subseteq{M\ideal{h}}\subseteq\cup_{n\geq1}N_n$, and there exists an index $m$ such that $M\ideal{h}'\ideal{h}\subseteq{N_m}$, which is a contradiction. Thus, $\Gamma$ is an inductive set of submodules, and by Zorn's lemma we have that $\Gamma$ has maximal submodules. A maximal submodule $N\subseteq{M}$ in $\Gamma$ is a core totally $\sigma$--cosimple submodule.
\end{proof}

Of particular interest is the case in which $M=A$; in this case for every ideal $\ideal{a}\subseteq{A}$ such that $\ideal{a}\notin\mathcal{L}(\sigma)$, there exist $\ideal{p}\in\mathcal{C}(\sigma)$ such that $\ideal{a}\subseteq\ideal{p}$.

\providecommand{\bysame}{\leavevmode\hbox to3em{\hrulefill}\thinspace}
\providecommand{\MR}{\relax\ifhmode\unskip\space\fi MR }
\providecommand{\MRhref}[2]{%
  \href{http://www.ams.org/mathscinet-getitem?mr=#1}{#2}
}
\providecommand{\href}[2]{#2}

\end{document}